\newcommand{\ud}[0]{\mathrm{d}}
\newcommand{\dist}[0]{\operatorname{dist}}
\newcommand{\abs}[1]{|#1|}
\newcommand{\Babs}[1]{\Big|#1\Big|}
\newcommand{\Norm}[2]{\|#1\|_{#2}}
\newcommand{\BNorm}[2]{\Big\|#1\Big\|_{#2}}
\newcommand{\pair}[2]{\langle #1,#2 \rangle}
\newcommand{\Bpair}[2]{\Big\langle #1,#2 \Big\rangle}
\newcommand{\ave}[1]{\langle #1\rangle}
\newcommand{\spt}[0]{\operatorname{spt}}
\newcommand{\loc}[0]{\operatorname{loc}}
\newcommand{\id}[0]{\operatorname{id}}
\newcommand{\sign}[0]{\operatorname{sgn}}
\newcommand{\R}{\mathbb{R}}
\newcommand{\N}{\mathbb{N}}
\newcommand{\Z}{\mathbb{Z}}
\newcommand{\Exp}[0]{\mathbb{E}}
\newcommand{\good}[0]{\operatorname{good}}
\numberwithin{equation}{section}
\newtheorem{theorem}{Theorem}[section] 
\newtheorem{lemma}[theorem]{Lemma}     
\newtheorem{corollary}[theorem]{Corollary}
\newtheorem{proposition}[theorem]{Proposition}
\theoremstyle{definition}
\newtheorem{conjecture}{Conjecture}
\newtheorem{definition}{Definition}
\newtheorem{remark}{Remark}
\newtheorem{assumption}{Assumption}
\title[Hilbert transform with general measures]{The two-weight inequality for\\ the Hilbert transform with general measures}
\author[T.~P.\ Hyt\"onen]{Tuomas P.\ Hyt\"onen}
\address{Department of Mathematics and Statistics, P.O.B.~68 (Gustaf H\"all\-str\"omin katu~2b), FI-00014 University of Helsinki, Finland}
\email{tuomas.hytonen@helsinki.fi}
\subjclass[2010]{42B20 (primary), 42B25 (secondary)}
\thanks{The author was supported by the European Union through the ERC Starting Grant ``Analytic--probabilistic methods for borderline singular integrals'', and by the Academy of Finland, grants 130166 and 133264 and the Centre of Excellence in Analysis and Dynamics Research.}
\begin{document}

\maketitle

\begin{abstract}
The two-weight inequality for the Hilbert transform is characterized for an arbitrary pair of positive Radon measures $\sigma$ and $w$ on $\R$. In particular, the possibility of common point masses is allowed, lifting a restriction from the recent solution of the two-weight problem by Lacey, Sawyer, Shen and Uriarte-Tuero. Our characterization is in terms of Sawyer-type testing conditions and a variant of the two-weight $A_2$ condition, where $\sigma$ and $w$ are integrated over complementary intervals only. A key novely of the proof is a two-weight inequality for the Poisson integral with `holes'.
\end{abstract}

\tableofcontents

\section{Introduction}

The two-weight problem (in the nowadays preferred dual-weight formulation) for the Hilbert transform
\begin{equation*}
  H(f\ud\sigma)(x)=\int_{\R}\frac{1}{x-y}f(y)\ud\sigma(y),\qquad x\notin\spt f,\quad\spt f\text{ compact},
\end{equation*}
 is to characterize the pairs of non-negative weight functions $\sigma,w\in L^1_{\loc}(\R)$ for which the weighted measures $\ud\sigma:=\sigma\ud x$ and $\ud w:=w\ud x$ satisfy
\begin{equation*}
   \Norm{H(f\ud\sigma)}{L^2(w)}\leq C\Norm{f}{L^2(\sigma)}\qquad\forall\ f\in L^2(\sigma).
\end{equation*}
A variant of this problem was solved in a complex-variable framework by Cotlar and Sadosky \cite{CoSa:79} as early as 1979, but obtaining a real-variable characterization turned out to be a much more difficult problem. A lot of deep work in the last ten years or more was devoted to obtaining conditional characterizations under increasingly general side conditions on $\sigma$ and $w$, including the doubling property \cite[Theorem  15.1]{Volberg:book} and so-called `pivotal' \cite{NTV:2wHilbert} and `energy' \cite{LSU:2wHilbertEnergy} hypotheses. In a culmination of these efforts, a complete solution was finally obtained by Lacey, Sawyer, Shen and Uriarte-Tuero \cite{Lacey:2wHilbert,LSSU:2wHilbert}. Indeed, their recent result goes beyond the weighted problem as stated, in that their characterization is equally valid for any pair of positive Radon measures $\sigma$ and $w$, subject only to the restriction that they do not share a common point mass; this manifestly covers the case of weighted measures.

The aim of this paper is to lift this last remaining side condition, and to establish the following unconditional characterization for any pair of Radon measures $\sigma$ and $w$ on the real line: (A rigorous reading of the theorem requires an interpretation of the meaning of the Hilbert transform in the considered generality, and we return to this technical point in Section~\ref{ss:interpretation} below.)

\begin{theorem}\label{thm:main}
Let $\sigma$ and $w$ be two Radon measures on $\R$. Then the Hilbert transform satisfies the norm inequality
\begin{equation}\label{eq:Hbounded}
  \Norm{H(f\ud\sigma)}{L^2(w)}\leq C\Norm{f}{L^2(\sigma)}\qquad\forall\ f\in L^2(\sigma),
\end{equation}
if and only if it satisfies the global testing conditions
\begin{equation}\label{eq:HtestGlobal}
  \Norm{H(1_I\ud\sigma)}{L^2(w)} \leq \mathcal{H}_{\operatorname{glob}}\sigma(I)^{1/2}, \qquad
  \Norm{H(1_I\ud w)}{L^2(\sigma)} \leq \mathcal{H}_{\operatorname{glob}}^* w(I)^{1/2},
\end{equation}
if and only if it satisfies the local (or Sawyer-type) testing conditions
\begin{equation}\label{eq:HtestLocal}
  \Norm{1_I H(1_I\ud\sigma)}{L^2(w)} \leq {\mathcal{H}}\sigma(I)^{1/2}, \qquad
  \Norm{1_I H(1_I\ud w)}{L^2(\sigma)} \leq {\mathcal{H}}^* w(I)^{1/2},
\end{equation}
and the following version of the Muckenhoupt--Poisson two-weight $A_2$ conditions:
\begin{equation}\label{eq:newA2}
  \sigma(I)\int_{I^c}\frac{\ud w(x)}{(x-c_I)^2}\leq([\sigma,w]_{A_2}^*)^{2},\qquad
  w(I)\int_{I^c}\frac{\ud\sigma(x)}{(x-c_I)^2}\leq ([w,\sigma]_{A_2}^*)^2,
\end{equation}
where inequalities \eqref{eq:HtestGlobal}, \eqref{eq:HtestLocal} and \eqref{eq:newA2} are required for all finite intervals $I\subset\R$.

Moreover, the best constants in these inequalities satisfy
\begin{equation*}
  C\eqsim\mathcal{H}_{\operatorname{glob}}+\mathcal{H}_{\operatorname{glob}}^*
    \eqsim\mathcal{H}+\mathcal{H}^*+[\sigma,w]_{A_2}^*+[w,\sigma]_{A_2}^*.
\end{equation*}
\end{theorem}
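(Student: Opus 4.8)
The plan is to follow the now-classical architecture of two-weight $T1$-type theorems, but to carefully re-engineer every step so that it survives the presence of common point masses. The easy implications are that \eqref{eq:Hbounded} $\Rightarrow$ \eqref{eq:HtestGlobal} $\Rightarrow$ \eqref{eq:HtestLocal}, and that \eqref{eq:Hbounded} (applied to $f=1_{I^c}$, suitably truncated) yields the Poisson-$A_2$ conditions \eqref{eq:newA2} after a standard lower bound on $|x-y|^{-1}$ for $x\in I$, $y\in I^c$; symmetrically for the dual. The substance is the reverse direction: from the local testing conditions \eqref{eq:HtestLocal} together with \eqref{eq:newA2}, deduce the norm bound \eqref{eq:Hbounded}, with the global testing conditions absorbed along the way. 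Throughout I would work with a truncated/regularized Hilbert transform $H_\eps$, prove bounds uniform in $\eps$, and appeal to the interpretation of $H$ fixed in Section~\ref{ss:interpretation}; the point is that all estimates below are for genuine bounded bilinear forms $\langle H_\eps(f\ud\sigma),g\ud w\rangle$.

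First I would set up a \emph{random dyadic grid} decomposition (Nazarov--Treil--Volberg), expanding $f\in L^2(\sigma)$ and $g\in L^2(w)$ in martingale differences adapted to $\sigma$ and $w$ respectively. Here the first genuine difficulty appears: with atoms, the martingale difference on an interval $I$ that is split into children $I'$ must be handled via the \emph{broken} Haar system — one keeps not only the usual cancellative Haar functions but also the indicators $1_{I'}$ carrying the part of $f$ that `lives' at the splitting point, because a point mass sitting on the common endpoint of the children cannot be resolved by cancellative functions alone. So the decomposition has the schematic form $f=\sum_I \Delta_I^\sigma f + (\text{non-cancellative endpoint terms})$, and one must check that this is still an orthogonal (or at least Bessel) decomposition in $L^2(\sigma)$. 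Next, run the standard good/bad splitting: discard, in expectation, the bad intervals (those too close to a much larger interval of the other grid), at the cost of an arbitrarily small multiplicative loss, reducing matters to the sum over pairs $(I,J)$ of good intervals. Then partition the resulting bilinear sum $\sum_{I,J}\langle H(\Delta_I^\sigma f),\Delta_J^w g\rangle$ according to the relative position and size of $I$ and $J$: (i) the \emph{diagonal / local} part where $I$ and $J$ are comparable and nearby; (ii) the \emph{neighbour} terms; (iii) the \emph{far / paraproduct} part where one interval is deep inside the other; (iv) the \emph{long-range} part where $I$ and $J$ are separated.

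The main obstacle — and where the `Poisson integral with holes' novelty advertised in the abstract enters — is the long-range and paraproduct parts. In the no-common-mass setting one controls these via a two-weight inequality for the dyadic Poisson operator, but when $\sigma$ and $w$ can charge the same point the ordinary Poisson kernel $P(J,\sigma)$ over an interval $J$ is the wrong object: one must excise from the integration the atom(s) that $J$ shares with its `dual' interval, i.e. replace $\int_{\R}\frac{\ell(J)}{(\ell(J)+\dist(y,J))^2}\ud\sigma(y)$ by the analogous integral with a hole cut out, and prove a two-weight $L^2$ boundedness statement for that holey Poisson operator under the hypothesis \eqref{eq:newA2} (this is exactly why the $A_2$-type condition is stated with $\sigma$ and $w$ over \emph{complementary} intervals). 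Granting that holey-Poisson bound, the long-range sum is estimated by a pigeonholing/stopping-interval argument combined with the Poisson bound, and the paraproduct/stopping part by the Lacey--Sawyer--Shen--Uriarte-Tuero \emph{functional energy} machinery, again re-run with holey Poisson quantities and with the broken Haar functions carrying the atomic contributions. The local part (i) is precisely where the testing constants $\mathcal H,\mathcal H^*$ are consumed, via a Sawyer-type stopping-time argument on $\sigma$- and $w$-Carleson sequences. Summing the four parts gives the bound $C\lesssim \mathcal H+\mathcal H^*+[\sigma,w]_{A_2}^*+[w,\sigma]_{A_2}^*$, and running the estimates instead from the global testing constants gives $C\lesssim \mathcal H_{\operatorname{glob}}+\mathcal H_{\operatorname{glob}}^*$; combined with the trivial reverse inequalities this yields the claimed equivalence of constants. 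I expect the delicate points to be: (a) proving the holey two-weight Poisson inequality from \eqref{eq:newA2}, where the standard Nazarov--Treil--Volberg Poisson argument must be adapted so that removing the hole does not break the underlying Carleson embedding; and (b) verifying that the functional-energy and quasi-orthogonality estimates remain valid when atoms force the presence of non-cancellative broken Haar projections, since several orthogonality cancellations silently used in the classical proof become mere Bessel inequalities here.
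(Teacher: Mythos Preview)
Your overall architecture matches the paper's: random dyadic grids with good/bad reduction, a split into separated/adjacent/nested parts, a stopping-time reduction to local and tail forms, and the identification of the ``Poisson integral with holes'' as the key novelty for the tail (paraproduct-type) terms. However, there is one significant divergence. You anticipate needing a \emph{broken Haar system} to cope with atoms at dyadic endpoints, and flag this as a delicate point~(b). The paper sidesteps this entirely: it simply chooses the initial dyadic system $\mathscr{D}^0$ so that neither $\sigma$ nor $w$ has a point mass at any of the (countably many) dyadic endpoints, and then only translates this grid by multiples of a fixed minimal scale $2^{-m}$. With this choice the standard martingale-difference decomposition works verbatim, the projections $\Delta_I^\sigma$, $\Delta_J^w$ are genuinely orthogonal, and the argument, in the author's words, ``hardly mentions the point masses at all.'' Your concern~(b) therefore evaporates; no Bessel-versus-orthogonality subtleties arise.

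A few smaller differences worth noting. The paper works throughout with an abstract bilinear form $B$ satisfying the kernel identity on disjointly (not just separatedly) supported indicators, rather than with truncations $H_\eps$ (though it shows separately how the truncated operators fit in). The Poisson-with-holes bound is obtained not by adapting an NTV Poisson argument but via a new two-weight theorem for positive dyadic operators of the shape $\sum_Q\lambda_Q\int_{Q_+}f\ud\sigma\int_{Q_-}g\ud w$ with \emph{distinct} children $Q_+\neq Q_-$; this is what makes the complementary-interval $A_2$ condition \eqref{eq:newA2} exactly the right hypothesis. Finally, the local part is not re-derived from scratch: it is taken as a black box from Lacey's paper (the paper's Proposition~\ref{prop:LaceyMain}), and only the stopping condition (an energy-type rule \eqref{eq:energyStop}) and the resulting bound on $\operatorname{size}(\mathscr{Q})$ are adjusted to the present setting. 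For the necessity of \eqref{eq:newA2}, the paper derives it from the \emph{off-diagonal} testing inequality $\Norm{1_{I^c}H(1_I\ud\sigma)}{L^2(w)}\leq\mathcal{H}_{\operatorname{off}}\sigma(I)^{1/2}$ by dualising against $g(x)=\sign(x-c_I)\abs{x-c_I}^{-1}$, not by testing on $1_{I^c}$ as you suggest.
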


Theorem~\ref{thm:main} unifies the recent two-weight theory of Lacey et al.~\cite{Lacey:2wHilbert,LSSU:2wHilbert} with the theory of discrete Hilbert transforms, where even the most classical example  $\sigma=w=\sum_{k\in\Z}\delta_k$ (for which the Hilbert transform is certainly bounded) is not, as such, covered by \cite{Lacey:2wHilbert,LSSU:2wHilbert}. For more general point distributions on the line, Theorem~\ref{thm:main} is probably new, and of course it allows for many further examples that combine continuous and discrete measures in nontrivial ways. 

Aside from the proof (to be discussed below), a key novelty of the above characterization itself is the nature of the $A_2$ conditions \eqref{eq:newA2}, which are weaker than the ones used in \cite{Lacey:2wHilbert,LSSU:2wHilbert}. In the above conditions, the measures $\sigma$ and $w$ are integrated over complementary sets, which is in contrast with the na\"ive two-weight generalization of the classical $A_2$ condition,
\begin{equation}\label{eq:oldA2}
  \sup_I\frac{\sigma(I)w(I)}{\abs{I}^2}<\infty.
\end{equation}
In the one-weight case (which in our dual-weight formulation corresponds to $\sigma(x)=1/w(x)$), this alone is a necessary and sufficient condition for the inequality \eqref{eq:Hbounded}, and much of the two-weight theory has been guided by attempts to find suitably strengthened versions of \eqref{eq:oldA2}. Here, in a sense, we go against this trend by weakening the $A_2$ conditions that were used in the previous developments. This is necessary since, as pointed out in \cite[Sec.~1.2.1]{Lacey:2wHilbertPrimer}, \eqref{eq:oldA2} cannot hold if there is a point $a$ with $\sigma\{a\}w\{a\}>0$, since the measure ratio blows up as $I$ shrinks to $a$. Applying a similar limiting procedure to our condition~\eqref{eq:newA2}, we find that
\begin{equation*}
  \sigma\{a\}\int_{\R\setminus\{a\}}\frac{\ud w(x)}{(x-a)^2}\leq ([\sigma,w]_{A_2}^*)^2.
\end{equation*}
Thus our Muckenhoupt--Poisson condition forces a relatively strong decay for $w$ in the \emph{punctured} neighbourhood of any atom of $\sigma$; by Theorem~\ref{thm:main}, this is necessary for the boundedness of the Hilbert transform. However, the possibility that $\sigma\{a\}w\{a\}>0$ is by no means excluded, as it should not.

Already in the absence of common point masses, we feel that our approach provides useful insight into the existing two-weight theory, in that our conditions~\eqref{eq:HtestLocal} and \eqref{eq:newA2} have more clearly defined roles in controlling the Hilbert transform on and off the support of the original function, respectively. Incidentally, an $A_2$ condition of a similar nature has appeared in the work of Belov, Mengestie and Seip \cite[Eq. (1.5)]{BMS} on discrete Hilbert transforms on sparse sequences in the plane, and we strongly feel that working with this type of weight conditions, rather than \eqref{eq:oldA2} or its stronger versions, will be instrumental in the further development of the two-weight theory. Indeed, following the circulation of the present paper, this framework and some related techniques have already been adopted to study a number of higher dimensional extensions of the two-weight inequalities, in  \cite{LSSUW:Cauchy,LaWick:Riesz,SSU:T1,SSU:fractional}.

The issue of common point masses was already discussed at some length by Lacey \cite[Sec.~1.2.1]{Lacey:2wHilbertPrimer}, who stated (although only in Version~1 of the cited e-print) the following conjecture, formulated in terms of  the Poisson integral
\begin{equation*}
  P(f\ud\sigma,I):=\int_{\R}\frac{\abs{I}f(x)\ud\sigma(x)}{\abs{I}^2+(x-c_I)^2}.
\end{equation*}

\begin{conjecture}[(Lacey~\cite{Lacey:2wHilbertPrimer})]
Let $\sigma$ and $w$ be two Radon measures on $\R$. For each interval $I$, let $b_I$ be the unique (if it exists) point such that
\begin{equation*}
  P(1_{\{b_I\}}\ud\sigma,I) >\frac12 P(\ud\sigma,I),\qquad
  P(1_{\{b_I\}}\ud w,I)>\frac12 P(\ud w,I).
\end{equation*}
Let $\tilde\sigma_I:=\sigma-\sigma\{b_I\}\delta_{b_I}$ if $b_I$ exists, and $\tilde\sigma_I:=\sigma$ otherwise, and let $\tilde w_I$ be defined similarly.

Then the Hilbert transform satisfies the norm inequality \eqref{eq:Hbounded}, if and only if it satisfies the local testing conditions \eqref{eq:HtestLocal} and the following $A_2$-type condition:
\begin{equation}\label{eq:LaceyA2}
  ([\sigma,w]_{A_2}^{**})^2:=\sup_I\Big(  P(\ud\tilde\sigma_I,I)P(\ud w,I)+P(\ud\tilde w_I,I)P(\ud\sigma,I) \Big) <\infty.
\end{equation}
\end{conjecture}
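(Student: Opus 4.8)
The plan is to deduce the conjecture from Theorem~\ref{thm:main}. Since the local testing conditions \eqref{eq:HtestLocal} occur in both characterisations, it suffices to prove that, for a fixed pair $\sigma,w$ of Radon measures, the Muckenhoupt--Poisson conditions \eqref{eq:newA2} and \eqref{eq:LaceyA2} are equivalent with comparable best constants, $[\sigma,w]_{A_2}^*+[w,\sigma]_{A_2}^*\eqsim[\sigma,w]_{A_2}^{**}$. (If one of the two implications should need them, the testing constants $\mathcal H+\mathcal H^*$ may be added on the right, since these quantities sit on both sides of the equivalence.) The first step is to pass to Poisson language: from $|I|^2+(x-c_I)^2\eqsim|I|^2$ on $I$ and $\eqsim(x-c_I)^2$ on $I^c$ one gets, for any measure $\mu$, $P(1_I\ud\mu,I)\eqsim\mu(I)/|I|$, $P(1_{I^c}\ud\mu,I)\eqsim|I|\int_{I^c}(x-c_I)^{-2}\ud\mu$, and $P(1_{\{a\}}\ud\mu,I)\eqsim\mu\{a\}/|I|$ for $a\in I$, so that \eqref{eq:newA2} is equivalent to
\begin{equation*}
  P(1_I\ud\sigma,I)\,P(1_{I^c}\ud w,I)+P(1_I\ud w,I)\,P(1_{I^c}\ud\sigma,I)\lesssim K^2,\qquad K:=[\sigma,w]_{A_2}^*+[w,\sigma]_{A_2}^*,
\end{equation*}
for all finite $I$. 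I will also use the punctured-neighbourhood consequence of \eqref{eq:newA2} noted after Theorem~\ref{thm:main}, namely $\mu\{a\}\int_{\R\setminus\{a\}}(x-a)^{-2}\ud\nu(x)\lesssim([\mu,\nu]_{A_2}^*)^2$, obtained by specialising \eqref{eq:newA2} to intervals shrinking to $\{a\}$.

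Next I record a multiscale reformulation. Bisecting an interval $J$ into halves $J',J''$ and applying \eqref{eq:newA2} at $J'$ and $J''$ bounds the ``cross'' products $\sigma(J')w(J'')$ and $\sigma(J'')w(J')$ by $\lesssim K^2|J|^2$ (for $x$ in the sibling of $J'$ one has $(x-c_{J'})^{-2}\ge|J|^{-2}$); iterating and summing the resulting geometric series yields
\begin{equation*}
  \sigma(J)\,w(J)\lesssim K^2|J|^2+\sum_{a}\sigma\{a\}\,w\{a\},
\end{equation*}
the sum over common atoms $a\in J$. If, in addition, no atom $a\in J$ carries more than half of $P(\ud\sigma,J)$ --- equivalently, $b_J$ is not a $\sigma$-dominating point --- then either the tail $P(1_{J^c}\ud\sigma,J)$ already dominates the local term $\sigma(J)/|J|$, in which case one of the inequalities of \eqref{eq:newA2} gives $\sigma(J)w(J)/|J|^2\lesssim K^2$ directly, or else non-domination forces $\sigma(J)-\sigma\{a\}\gtrsim\sigma(J)$ for every atom, whence the punctured-neighbourhood bound yields $w\{a\}\lesssim K^2|J|^2/\sigma(J)$ and so $\sum_a\sigma\{a\}w\{a\}\le(\sup_a w\{a\})\,\sigma(J)\lesssim K^2|J|^2$. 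Thus in all such ``non-dominated'' cases the \emph{classical} two-weight condition \eqref{eq:oldA2} holds at the scale $J$.

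The equivalence is then proved scale by scale, distinguishing whether $b_I$ exists and, if so, whether $b_I\in I$. When $b_I$ exists and $b_I\in I$, it carries a fixed fraction of $P(\ud\sigma,I)$ and of $P(\ud w,I)$, so $\sigma\{b_I\}\eqsim\sigma(I)$, $w\{b_I\}\eqsim w(I)$, both Poisson tails are controlled by the corresponding local parts, and a direct computation gives
\begin{equation*}
  P(\ud\tilde\sigma_I,I)\,P(\ud w,I)\eqsim\frac{(\sigma(I)-\sigma\{b_I\})\,w\{b_I\}}{|I|^2}+w\{b_I\}\int_{I^c}\frac{\ud\sigma(x)}{(x-c_I)^2},
\end{equation*}
and symmetrically for $P(\ud\tilde w_I,I)P(\ud\sigma,I)$. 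The first summand is at most the punctured-neighbourhood quantity at $b_I$; the second is, since $w\{b_I\}\le w(I)$, at most the \eqref{eq:newA2}-quantity $w(I)\int_{I^c}(x-c_I)^{-2}\ud\sigma$ at $I$, which because $w\{b_I\}\eqsim w(I)$ also furnishes the reverse estimate; so for these $I$ the two conditions are equivalent with no loss. When $b_I$ does not exist, $\tilde\sigma_I=\sigma$ and $\tilde w_I=w$, and \eqref{eq:LaceyA2} at $I$ reads $P(\ud\sigma,I)P(\ud w,I)\lesssim K^2$; expanding into local$\times$local, local$\times$tail and tail$\times$tail, the mixed pieces are exactly the \eqref{eq:newA2}-quantities at $I$, the local$\times$local piece $\sigma(I)w(I)/|I|^2$ is the ``non-dominated'' case of the previous paragraph (absence of $b_I$ meaning $\sigma$, or $w$, has no dominating atom in $I$), and the tail$\times$tail piece $P(1_{I^c}\ud\sigma,I)P(1_{I^c}\ud w,I)$ is bounded by summing the \eqref{eq:newA2} inequalities over the concentric dilates $2^kI$. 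The reverse inclusion for such $I$ follows since removing the atom at $b_I$ (or its absence) does not alter $P(1_{I^c}\ud w,I)$, whence $P(\ud\tilde w_I,I)P(\ud\sigma,I)\ge P(1_{I^c}\ud w,I)P(1_I\ud\sigma,I)\eqsim\sigma(I)\int_{I^c}(x-c_I)^{-2}\ud w$, and symmetrically. Finally the case $b_I\in I^c$ is reduced to the foregoing by passing to the smallest interval $\widehat I\supset I\cup\{b_I\}$ and comparing Poisson kernels at the two scales.

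I expect the genuinely delicate points to be two. First, the tail$\times$tail estimate $P(1_{I^c}\ud\sigma,I)P(1_{I^c}\ud w,I)\lesssim K^2$: the decay gained from \eqref{eq:newA2} at a single dilate $2^kI$ must be summed over $k$ without accumulating a factor equal to the number of scales, which appears to require using the \emph{whole} tail $\sigma(2^kI)\int_{(2^kI)^c}(x-c_I)^{-2}\ud w\lesssim K^2$ rather than its annular constituents. Second, the bookkeeping when $b_I\notin I$, where the atom removed in \eqref{eq:LaceyA2} lies in the tail region seen from $I$: one must argue by cases according to whether $b_I$ is dominating for the enclosing interval $\widehat I$, the non-dominating alternative being covered by the scale-$\widehat I$ instance of the classical $A_2$ bound established above. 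Neither difficulty should move the best constants outside the dimensionless factors hidden in $\eqsim$.
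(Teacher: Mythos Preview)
Your plan is considerably more ambitious than the paper's. The paper does \emph{not} prove the equivalence $[\sigma,w]_{A_2}^*+[w,\sigma]_{A_2}^*\eqsim[\sigma,w]_{A_2}^{**}$; it proves only the easy direction $[\sigma,w]_{A_2}^*+[w,\sigma]_{A_2}^*\lesssim[\sigma,w]_{A_2}^{**}$ (the Lemma immediately after the Conjecture), via the two-case observation that if $b_I\in I$ then $P(\ud\tilde w_I,I)P(\ud\sigma,I)\ge P(1_{I^c}\ud w,I)P(1_I\ud\sigma,I)$, and if $b_I\notin I$ (or does not exist) then the other summand does the job. This, combined with Theorem~\ref{thm:main}, yields the ``if'' half of the Conjecture. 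For the ``only if'' half the paper simply cites Lacey's own \cite[Proposition~3.1]{Lacey:2wHilbertPrimer}, which derives \eqref{eq:LaceyA2} directly from boundedness of $H$. Your ``reverse inclusion'' paragraph is essentially the paper's Lemma, so for the ``if'' direction you agree with the paper.

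Your attempt to prove the reverse implication $[\sigma,w]_{A_2}^{**}\lesssim[\sigma,w]_{A_2}^*+[w,\sigma]_{A_2}^*$ directly is an interesting alternative to citing Lacey, but the sketch has genuine gaps. First, your reading of ``$b_I$ does not exist'' is wrong: it means no single atom dominates \emph{both} $\sigma$ and $w$, not that one of the measures lacks a dominating atom. There may well be a $\sigma$-dominating atom $a_\sigma$ and a distinct $w$-dominating atom $a_w$; your multiscale bound on $\sum_a\sigma\{a\}w\{a\}$ then needs a per-atom case split (for each common atom, choose the measure it fails to dominate), which you do not carry out. Second, your tail$\times$tail estimate, when unpacked, reduces the product $P(1_{I^c}\ud\sigma,I)P(1_{I^c}\ud w,I)$ to same-interval products $\sigma(A)w(A)$ on the dyadic annuli $A\subset I^c$; but the hypothesis ``no $b_I$'' is at scale $I$ and says nothing about domination at those annuli, so you cannot simply reinvoke your local$\times$local argument there. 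Third, the case $b_I\notin I$ is dispatched in one sentence, and comparing Poisson kernels at $I$ and at the enlarged $\widehat I$ introduces a factor depending on $\operatorname{dist}(b_I,I)/|I|$, which you would need to control.

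These issues look repairable with more work (a clean route is the pointwise kernel inequality $\frac{|I|^2}{(|I|^2+(a-c_I)^2)(|I|^2+(x-c_I)^2)}\lesssim\frac{1}{(x-a)^2}$, which gives $P(1_{\{a\}}\ud\mu,I)\,P(1_{\R\setminus\{a\}}\ud\nu,I)\lesssim\mu\{a\}\int_{\R\setminus\{a\}}(x-a)^{-2}\ud\nu\lesssim K^2$ for any atom $a$, handling all cases where some atom dominates one measure), but as written the argument is incomplete, and the paper's route---easy Lemma plus citation---is both shorter and complete.
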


The deeper ``if'' part of Lacey's conjecture is a corollary to Theorem~\ref{thm:main} by means of the following. (For the ``only if'' part, see the proof of \cite[Proposition 3.1]{Lacey:2wHilbertPrimer}.)

\begin{lemma}
The condition \eqref{eq:LaceyA2} implies \eqref{eq:newA2}. More precisely,
\begin{equation*}
  [\sigma,w]_{A_2}^*+[w,\sigma]_{A_2}^*\lesssim  [\sigma,w]_{A_2}^{**}.
\end{equation*}
\end{lemma}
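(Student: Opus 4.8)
The plan is to show that each of the two quantities on the left of the asserted inequality is dominated by $[\sigma,w]_{A_2}^{**}$; by symmetry it suffices to bound $[\sigma,w]_{A_2}^*$. Fix a finite interval $I$. We must estimate $\sigma(I)\int_{I^c}(x-c_I)^{-2}\ud w(x)$. The first elementary observation is that for $x\in I^c$ one has $|x-c_I|\geq\tfrac12|I|$, hence $\abs{I}^2+(x-c_I)^2\leq 5(x-c_I)^2$, so that
\begin{equation*}
  \int_{I^c}\frac{\ud w(x)}{(x-c_I)^2}\leq\frac{5}{\abs{I}}\int_{I^c}\frac{\abs{I}\,\ud w(x)}{\abs{I}^2+(x-c_I)^2}\leq\frac{5}{\abs{I}}P(\ud w,I).
\end{equation*}
Similarly $\sigma(I)\leq 2\,P(1_I\ud\sigma,I)\cdot\abs{I}/\abs{I}$ — more precisely, for $x\in I$ we have $\abs{I}^2+(x-c_I)^2\leq\tfrac54\abs{I}^2$, so $\sigma(I)\leq\tfrac54\abs{I}\,P(1_I\ud\sigma,I)\leq\tfrac54\abs{I}\,P(\ud\sigma,I)$. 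Multiplying, the factors of $\abs{I}$ cancel and we obtain $\sigma(I)\int_{I^c}(x-c_I)^{-2}\ud w(x)\lesssim P(\ud\sigma,I)\,P(\ud w,I)$.

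It remains to dominate $P(\ud\sigma,I)\,P(\ud w,I)$ by $([\sigma,w]_{A_2}^{**})^2$, i.e.\ by $P(\ud\tilde\sigma_I,I)P(\ud w,I)+P(\ud\tilde w_I,I)P(\ud\sigma,I)$. If the exceptional point $b_I$ of the conjecture does not exist, then $\tilde\sigma_I=\sigma$ and the bound is immediate. So suppose $b_I$ exists. We split $\sigma=\tilde\sigma_I+\sigma\{b_I\}\delta_{b_I}$, giving $P(\ud\sigma,I)=P(\ud\tilde\sigma_I,I)+P(1_{\{b_I\}}\ud\sigma,I)$, and similarly for $w$. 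Thus
\begin{equation*}
  P(\ud\sigma,I)P(\ud w,I)=P(\ud\tilde\sigma_I,I)P(\ud w,I)+P(1_{\{b_I\}}\ud\sigma,I)\big(P(\ud\tilde w_I,I)+P(1_{\{b_I\}}\ud w,I)\big).
\end{equation*}
The term $P(\ud\tilde\sigma_I,I)P(\ud w,I)$ is already one of the two summands of $([\sigma,w]_{A_2}^{**})^2$, and $P(1_{\{b_I\}}\ud\sigma,I)P(\ud\tilde w_I,I)\leq P(\ud\sigma,I)P(\ud\tilde w_I,I)$ is bounded by the other. The only genuinely new term is the ``diagonal'' one $P(1_{\{b_I\}}\ud\sigma,I)\,P(1_{\{b_I\}}\ud w,I)$, coming from the common mass at $b_I$.

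Here is where the defining inequalities for $b_I$ enter. We use them in the form $P(1_{\{b_I\}}\ud\sigma,I)>\tfrac12P(\ud\sigma,I)$, which combined with the decomposition $P(\ud\sigma,I)=P(\ud\tilde\sigma_I,I)+P(1_{\{b_I\}}\ud\sigma,I)$ yields $P(1_{\{b_I\}}\ud\sigma,I)>P(\ud\tilde\sigma_I,I)$; symmetrically $P(1_{\{b_I\}}\ud w,I)>P(\ud\tilde w_I,I)$. If \emph{both} $P(1_{\{b_I\}}\ud\sigma,I)\leq P(\ud\tilde\sigma_I,I)$ \emph{or} the analogue for $w$ failed — which is exactly the situation when $b_I$ exists — we still need the product. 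The trick is: by the $b_I$-inequality for $w$, $P(1_{\{b_I\}}\ud w,I)>\tfrac12P(\ud w,I)$, so
\begin{equation*}
  P(1_{\{b_I\}}\ud\sigma,I)P(1_{\{b_I\}}\ud w,I)\leq P(\ud\sigma,I)\cdot P(1_{\{b_I\}}\ud w,I),
\end{equation*}
and now $P(1_{\{b_I\}}\ud w,I)=P(\ud w,I)-P(\ud\tilde w_I,I)$; if $P(\ud\tilde w_I,I)\geq\tfrac12 P(\ud w,I)$ we are done since then the diagonal term is $\leq P(\ud\sigma,I)P(\ud\tilde w_I,I)$, while if $P(\ud\tilde w_I,I)<\tfrac12P(\ud w,I)$ we instead use the $b_I$-inequality for $\sigma$ to write $P(1_{\{b_I\}}\ud w,I)P(1_{\{b_I\}}\ud\sigma,I)\leq P(\ud w,I)P(1_{\{b_I\}}\ud\sigma,I)$ and bound $P(1_{\{b_I\}}\ud\sigma,I)=P(\ud\sigma,I)-P(\ud\tilde\sigma_I,I)$ — but this circularity means the honest argument is simply: $P(1_{\{b_I\}}\ud\sigma,I)P(1_{\{b_I\}}\ud w,I)\leq P(\ud\sigma,I)P(1_{\{b_I\}}\ud w,I)=P(\ud\sigma,I)\big(P(\ud w,I)-P(\ud\tilde w_I,I)\big)$, and separately $=P(\ud w,I)\big(P(\ud\sigma,I)-P(\ud\tilde\sigma_I,I)\big)$; averaging and using $P(\ud\sigma,I)\le 2P(1_{\{b_I\}}\ud\sigma,I)$, $P(\ud w,I)\le 2P(1_{\{b_I\}}\ud w,I)$ gives, after rearranging, $P(1_{\{b_I\}}\ud\sigma,I)P(1_{\{b_I\}}\ud w,I)\lesssim P(\ud\tilde\sigma_I,I)P(\ud w,I)+P(\ud\tilde w_I,I)P(\ud\sigma,I)$. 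Collecting the estimates proves the lemma; the main obstacle, as indicated, is the correct algebraic handling of this diagonal term, and I expect the cleanest route to be the averaging identity just described rather than a case split.
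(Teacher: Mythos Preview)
Your argument has a genuine gap, and it occurs already in the first step. You bound
\[
  \sigma(I)\int_{I^c}\frac{\ud w(x)}{(x-c_I)^2}\lesssim P(\ud\sigma,I)\,P(\ud w,I),
\]
and then try to show that the \emph{full} product $P(\ud\sigma,I)P(\ud w,I)$ is controlled, for the same interval $I$, by $P(\ud\tilde\sigma_I,I)P(\ud w,I)+P(\ud\tilde w_I,I)P(\ud\sigma,I)$. This second inequality is simply false. Write $a=P(1_{\{b_I\}}\ud\sigma,I)$, $\tilde a=P(\ud\tilde\sigma_I,I)$, $b=P(1_{\{b_I\}}\ud w,I)$, $\tilde b=P(\ud\tilde w_I,I)$; the defining conditions for $b_I$ say $a>\tilde a$ and $b>\tilde b$, and you are asking whether $ab\lesssim \tilde a(b+\tilde b)+\tilde b(a+\tilde a)$. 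Take $a=b=1$, $\tilde a=\tilde b=\eps\to 0$: the left side is $1$, the right side is $O(\eps)$. No averaging or rearranging can rescue this; your ``diagonal term'' is genuinely unbounded by the Lacey $A_2$ quantity at the same $I$. (Concretely: if $\sigma$ and $w$ are both nearly a Dirac mass at $b_I$, the full Poisson product blows up while the Lacey quantity stays bounded.)

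The paper's fix is to never pass to the full product. One keeps the complementary supports: $\sigma(I)\int_{I^c}(x-c_I)^{-2}\ud w(x)\eqsim P(1_I\ud\sigma,I)\,P(1_{I^c}\ud w,I)$. Now do a case split on the \emph{location} of $b_I$. If $b_I\in I$, then removing $\{b_I\}$ from $w$ does not touch $w$ on $I^c$, so $P(\ud\tilde w_I,I)\geq P(1_{I^c}\ud w,I)$, while trivially $P(\ud\sigma,I)\geq P(1_I\ud\sigma,I)$; hence the product is dominated by $P(\ud\tilde w_I,I)P(\ud\sigma,I)$. If $b_I\notin I$ (or does not exist), then $\tilde\sigma_I$ agrees with $\sigma$ on $I$, so $P(\ud\tilde\sigma_I,I)\geq P(1_I\ud\sigma,I)$ and $P(\ud w,I)\geq P(1_{I^c}\ud w,I)$, giving domination by $P(\ud\tilde\sigma_I,I)P(\ud w,I)$. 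Either way one of the two summands in \eqref{eq:LaceyA2} already suffices. The point you lost by enlarging to the full Poisson integrals is precisely that $b_I$, being a single point, lies on one side of the partition $I\cup I^c$, which kills the bad diagonal interaction.
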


\begin{proof}
If $b_I\in I$, then
\begin{equation*}
   P(\ud\tilde w_I,I)P(\ud\sigma,I) \geq P(1_{I^c}\ud w,I)P(1_I\ud\sigma,I),
\end{equation*}
while if $b_I\notin I$ (including the case that there is no $b_I$), then
\begin{equation*}
   P(\ud\tilde\sigma_I,I)P(\ud w,I) \geq P(1_{I}\ud\sigma,I)P(1_{I^c} w,I).
\end{equation*}
Finally, we observe that
\begin{equation*}
  P(1_I\ud\sigma,I)P(1_{I^c}\ud w,I)\eqsim\frac{\sigma(I)}{\abs{I}}\int_{I^c}\frac{\abs{I}\ud w(x)}{(x-c_I)^{2}}
    =\sigma(I)\int_{I^c}\frac{\ud w(x)}{(x-c_I)^{2}}.
\end{equation*}
This shows that $[\sigma,w]_{A_2}^*\lesssim[\sigma,w]_{A_2}^{**}$, and the other bound follows by symmetry.
\end{proof}

While the formulation of Lacey's $A_2$ condition \eqref{eq:LaceyA2} suggests an approach that specifically addresses the distracting point masses, our present approach to Theorem~\ref{thm:main} hardly mentions the point masses at all, being carefully set up in such a way that their possible presence never makes a difference in our considerations. We feel that this gives certain `robustness' to the argument.

Given that most of the existing two-weight theory was developed in the presence of (at least) the classical $A_2$ condition \eqref{eq:oldA2}, we need to rewrite a major part of the argument of Lacey et al.~\cite{Lacey:2wHilbert,LSSU:2wHilbert}, setting up things in such a way that only complementary intervals appear, whenever making estimates with the $A_2$ condition. For some parts of the proof, like those derived from the `standard' non-homogeneous $T(1)$ theory, this modification is relatively straightforward; for some others, it requires more effort.

One of the key novelties of our proof is a two-weight inequality for the Poisson integral with `holes', as in the conditions~\eqref{eq:newA2}, where the integrals extend over the complement of $I$ only. A need for something like this in the general case was already foreseen by Lacey \cite[Sec.~1.2.1]{Lacey:2wHilbertPrimer}, while in the absence of common point masses, Sawyer's classical two-weight inequality for the full Poisson integral \cite{Sawyer:2wFractional} could be applied as a black box ingredient of the proof. Here, we handle the Poisson integral with holes by a new method which should also be of interest as an alternative approach to Sawyer's classical inequality. On the other hand, it turns out that we only need to elaborate on the first part of the work of Lacey et al. \cite{LSSU:2wHilbert}, the `global-to-local' reduction; the innovative verification of the local estimate by Lacey \cite{Lacey:2wHilbert} is already set up in such a way that it is ready for us to borrow as a black box.

\subsubsection*{Acknowledgements}
I would like to thank Henri Martikainen, Emil Vuorinen, and the anonymous referee for constructive comments that improved the manuscript.

\section{Set-up and preparations}\label{sec:setup}


\subsection{Interpretation of the two-weight Hilbert transform}\label{ss:interpretation}

For a number of reasons, we prefer a `minimalistic' definition of the Hilbert transform $H(\,\cdot\,\ud\sigma)$; cf.~\cite[paragraph of Eq.~(15.6)]{Volberg:book}: informally, it is any linear operator with the property that
\begin{equation*}
   H(f\ud\sigma)(x)=\int_{\R}\frac{1}{x-y}f(y)\ud\sigma(y)\qquad x\notin\spt f,
\end{equation*}
for appropriate functions $f$. The above property does not determine $H$ uniquely, and we will of course need to specify the domain of this linear operator. We turn to these details now.

Let $\mathscr{D}^0$ be a given system of half-open (say, of the form $[a,b)$) dyadic intervals on $\R$, for simplicity chosen so that 
neither $\sigma$ nor $w$ has a point mass in any of the countably many end-points of the intervals of $\mathscr{D}^0$. We call these the dyadic points. 

Let $\mathscr{F}$ be the space of all finite linear combinations of indicators $1_I$, where $I\in\mathscr{D}^0$. Notice that the indicator of any half-open interval with dyadic end-points is in $\mathscr{F}$. We assume that, a priori, we are given a bilinear form $B:\mathscr{F}\times\mathscr{F}\to\R$ with the property that
\begin{equation}\label{eq:Bdisjoint}
  B(f,g)=\iint\frac{1}{x-y}f(y)g(x)\ud\sigma(y)\ud w(x) 
\end{equation}
for all $f,g\in\mathscr{F}$ supported on \emph{disjoint} intervals $I\in\mathscr{D}^0$. It might seem a bit unusual that we postulate the kernel representation even for functions whose supports are not necessarily \emph{separated} (by a positive distance); we return to this point in Remark~\ref{rem:disjVsSep} below.

Observe that this formula does not specify $B(f,g)$ for all $f,g\in\mathscr{F}$, but we require that $B$ is nevertheless defined on all pairs of $f$ and $g$ in such a way that the above formula holds for disjointly supported functions. In different applications, such a bilinear form may naturally arise in different ways, so we prefer this axiomatic approach. For concreteness, we will indicate in Section \ref{subsec:truncated} below how to construct such a bilinear form with the help of the truncated Hilbert transforms, but we stress that this is just one possible way of defining the Hilbert transform, and another approach is sketched in Remark \ref{rem:anotherConstr} at the end of that section.

Now, let us interpret Theorem~\ref{thm:main} in our axiomatic framework.

By the statement that the Hilbert transform satisfies the norm inequality \eqref{eq:Hbounded}, we understand that
\begin{equation}\label{eq:Bbounded}
  \abs{B(f,g)}\leq C\Norm{f}{L^2(\sigma)}\Norm{g}{L^2(w)}
\end{equation}
for all $f,g\in\mathscr{F}$. Since $\mathscr{F}$ is dense in both $L^2(\sigma)$ and $L^2(w)$, this implies the existence of a unique extension of $B$ to a bounded bilinear form $B:L^2(\sigma)\times L^2(w)\to\R$, which satisfies \eqref{eq:Bbounded} with the same constant. By standard Hilbert space theory, such a bilinear form is uniquely associated with a bounded linear operator, say $H(\,\cdot\,\ud\sigma):L^2(\sigma)\to L^2(w)$, and its adjoint, say $H^*(\,\cdot\,\ud w):L^2(w)\to L^2(\sigma)$ such that
\begin{equation*}
  B(f,g)=\pair{H(f\ud\sigma)}{g}_w=\pair{f}{H^*(g\ud w)}_\sigma,
\end{equation*}
where $\pair{\ }{\ }_\sigma$ and $\pair{\ }{\ }_w$ are the inner products in $L^2(\sigma)$ and $L^2(w)$, respectively. Since the adjoint $H^*(\,\cdot\,\ud w)$ is also a Hilbert transform in the above axiomatic sense, except for a minus sign, we henceforth write $-H(\,\cdot\,\ud w)$ for $H^*(\,\cdot\,\ud w)$.

The testing conditions \eqref{eq:HtestGlobal} and \eqref{eq:HtestLocal} admit similar interpretations. The global testing condition \eqref{eq:HtestGlobal} is interpreted as
\begin{equation}\label{eq:BtestGlobal}
  \abs{B(1_I,g)}\leq\mathcal{H}_{\operatorname{glob}}\sigma(I)^{1/2}\Norm{g}{L^2(w)},\qquad
  \abs{B(f,1_I)}\leq\mathcal{H}_{\operatorname{glob}}^*\Norm{f}{L^2(\sigma)}w(I)^{1/2},
\end{equation}
and the local \eqref{eq:HtestLocal} as
\begin{equation}\label{eq:BtestLocal}
  \abs{B(1_I, 1_I g)}\leq\mathcal{H}\sigma(I)^{1/2}\Norm{g}{L^2(w)},\qquad
  \abs{B(1_I f,1_I)}\leq\mathcal{H}^*\Norm{f}{L^2(\sigma)}w(I)^{1/2},
\end{equation}
which are both required for all $f,g\in\mathscr{F}$ and all finite half-open intervals $I$ with dyadic end-points. The $A_2$ condition \eqref{eq:newA2} is of course independent of the interpretation of the Hilbert transform.

\begin{remark}[(Disjoint vs.\ separated intervals)]\label{rem:disjVsSep}
Having the identity \eqref{eq:Bdisjoint} for all disjointly (rather than just separatedly) supported pairs of $f,g\in\mathscr{F}$ is necessary for the existence of the Hilbert transform, in the following sense:

Suppose that we have a bilinear form $B:\mathscr{F}\times\mathscr{F}\to\R$ that is only assumed to satisfy \eqref{eq:Bdisjoint} for $f,g\in\mathscr{F}$ with $\dist(\spt f,\spt g)>0$. If $B$ satisfies the global testing conditions \eqref{eq:BtestGlobal} (in particular, if $B$ is bounded in the sense of \eqref{eq:Bbounded}), then \eqref{eq:Bdisjoint} also holds for any $f,g\in\mathscr{F}$ with disjoint supporting intervals.
\end{remark}

\begin{proof}
By linearity and symmetry, if suffices to prove \eqref{eq:Bdisjoint} for $f=1_I$, $g=1_J$, with $I=[a,b)$ and $J=[b,c)$. Let $b_n\uparrow b$ be dyadic points, and $I_n:=[a,b_n)$. Then \eqref{eq:Bdisjoint} holds for $(f,g)=(1_{I_n},1_J)$. As $n\to\infty$, we have $1_{I_n}\to 1_I$ in $L^2(\sigma)$. Since \eqref{eq:BtestGlobal} says that $f\mapsto B(f,1_I)$ is continuous in the norm of $L^2(\sigma)$, it follows that $B(1_{I_n},1_J)\to B(1_I,1_J)$. On the other hand, the kernel $1/(x-y)$ is positive for $(x,y)\in J\times I$, and hence
\begin{equation*}
  \iint\frac{1}{x-y}1_{I_n}(y)\ud\sigma(y)1_J(x)\ud w(x)\to   \iint\frac{1}{x-y}1_{I}(y)\ud\sigma(y)1_J(x)\ud w(x)
\end{equation*}
by the monotone convergence theorem.
\end{proof}

\subsection{Necessity of the $A_2$ condition}

Recall that Theorem~\ref{thm:main} asserts the equivalence of three different conditions, \eqref{eq:Hbounded} $\Leftrightarrow$ \eqref{eq:HtestGlobal} $\Leftrightarrow$ (\eqref{eq:HtestLocal} and \eqref{eq:newA2}), where \eqref{eq:newA2} is our new $A_2$ condition. It is obvious that \eqref{eq:Hbounded} $\Rightarrow$ \eqref{eq:HtestGlobal} $\Rightarrow$ \eqref{eq:HtestLocal}. The main implication of the theorem consists of showing that (\eqref{eq:HtestLocal} and \eqref{eq:newA2}) $\Rightarrow$ \eqref{eq:Hbounded}. We now clear the straightforward but not completely trivial implication \eqref{eq:HtestGlobal} $\Rightarrow$ \eqref{eq:newA2} out of the way.

In fact, let us formulate the intermediate condition
\begin{equation}\label{eq:HtestOff}
  \Norm{1_{I^c}H(1_I\ud\sigma)}{L^2(w)}\leq \mathcal{H}_{\operatorname{off}}\sigma(I)^{1/2},
\end{equation}
where clearly $\mathcal{H}_{\operatorname{off}}\leq\mathcal{H}_{\operatorname{glob}}$. We will now show that

\begin{lemma}\label{lem:A2nec}
\begin{equation*}
  [\sigma,w]_{A_2}^*\leq 2\mathcal{H}_{\operatorname{off}}.
\end{equation*}
\end{lemma}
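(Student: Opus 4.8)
The plan is to read off \eqref{eq:newA2} from the off-testing condition \eqref{eq:HtestOff} via a pointwise lower bound for $H(1_I\ud\sigma)$ on the complementary set $I^c$. Fix a finite half-open interval $I=[a,b)$ with dyadic end-points, put $c_I=(a+b)/2$, and note that for $x\in I^c$ the kernel $y\mapsto 1/(x-y)$ keeps a constant sign as $y$ ranges over $I$ (it is negative if $x<a$ and positive if $x\geq b$), so there is no cancellation in the defining integral. The elementary estimate $\abs{x-y}\leq\abs{x-c_I}+\abs{I}/2$ for $y\in I$, together with the fact that $x\notin I$ forces $\abs{x-c_I}\geq\abs{I}/2$, gives $\abs{x-y}\leq 2\abs{x-c_I}$ there. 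Combining the two facts,
\begin{equation*}
  \abs{H(1_I\ud\sigma)(x)}=\int_I\frac{\ud\sigma(y)}{\abs{x-y}}\geq\frac{\sigma(I)}{2\abs{x-c_I}}\qquad\text{for }x\in I^c.
\end{equation*}

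One technical point deserves a word: the restriction of $H(1_I\ud\sigma)$ to $I^c$ must be the one given by the kernel integral. Since $I$ and $I^c$ are disjoint (though not separated), Remark~\ref{rem:disjVsSep} lets us extend \eqref{eq:Bdisjoint} to the pairs $(1_I,1_J)$ for all dyadic $J\subseteq I^c$; these intervals exhaust $I^c$ precisely because the end-points of $I$ are dyadic points, so testing $H(1_I\ud\sigma)$ against them yields $H(1_I\ud\sigma)(x)=\int_I\ud\sigma(y)/(x-y)$ for $w$-a.e.\ $x\in I^c$. (That $H(1_I\ud\sigma)$ is a bona fide element of $L^2(w)$ to begin with is already built into the meaning of \eqref{eq:HtestOff}; if $\mathcal{H}_{\operatorname{off}}=\infty$ the assertion is trivial.)

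With the pointwise bound in hand the conclusion is immediate: squaring, integrating against $\ud w$ over $I^c$, and invoking \eqref{eq:HtestOff},
\begin{equation*}
  \frac{\sigma(I)^2}{4}\int_{I^c}\frac{\ud w(x)}{(x-c_I)^2}
    \leq\Norm{1_{I^c}H(1_I\ud\sigma)}{L^2(w)}^2\leq\mathcal{H}_{\operatorname{off}}^2\,\sigma(I),
\end{equation*}
and dividing by $\sigma(I)$ (the case $\sigma(I)=0$ being vacuous) gives $\sigma(I)\int_{I^c}\ud w(x)/(x-c_I)^2\leq 4\mathcal{H}_{\operatorname{off}}^2$, that is, $[\sigma,w]_{A_2}^*\leq 2\mathcal{H}_{\operatorname{off}}$.

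There is no real obstacle here; the argument lives and dies on the two elementary observations that $1/(x-y)$ does not change sign on $I$ and that $\abs{x-y}\leq 2\abs{x-c_I}$ on $I^c\times I$. The only place where a little care is needed is the part of $I^c$ lying close to $\partial I$, where $\abs{x-c_I}$ barely exceeds $\abs{I}/2$ and the inequality $\abs{x-y}\leq 2\abs{x-c_I}$ is essentially sharp; that sharpness is exactly why the resulting constant is $2$.
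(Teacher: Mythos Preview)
Your proof is correct and follows essentially the same route as the paper's: both arguments rest on the two observations that $1/(x-y)$ keeps a constant sign for $y\in I$ when $x\in I^c$, and that $\abs{x-y}\leq 2\abs{x-c_I}$ on $I^c\times I$. The only cosmetic difference is that the paper dualizes against the explicit test function $g(x)=\sign(x-c_I)\abs{x-c_I}^{-1}$ restricted to a bounded set $E$ separated from $I$ (and lets $E\uparrow I^c$ at the end), whereas you pass through Riesz representation to get a pointwise lower bound on $1_{I^c}H(1_I\ud\sigma)$ directly; note in particular that your appeal to Remark~\ref{rem:disjVsSep} is not really needed, since identifying $1_{I^c}H(1_I\ud\sigma)$ with the kernel integral $w$-a.e.\ already follows by testing against dyadic intervals strictly separated from $I$ (these suffice because the dyadic endpoints carry no $w$-mass).
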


\begin{proof}
We dualize \eqref{eq:HtestOff} with a function $g\in L^2(w)$, which is positive on the right of $I$ and negative on its left. Let also $E\subset I^c$ be a bounded set strictly separated from $I$. Then \eqref{eq:HtestOff} gives
\begin{equation*}
  \Babs{\iint_{E\times I}\frac{1}{x-y}\ud\sigma(y)g(x)\ud w(x)}\leq\mathcal{H}_{\operatorname{off}}\sigma(I)^{1/2}\Norm{1_E g}{L^2(w)}.
\end{equation*}
The key point is that $g(x)/(x-y)$ has constant sign throughout the domain of integration, so we may equivalently take absolute value inside the integral. The next observation is that $\abs{x-y}\leq 2\abs{x-c_I}$ for $(x,y)\in I^c\times I$. Hence, in fact,
\begin{equation*}
  \int_I\ud\sigma(y)\int_E \frac{\abs{g(x)}}{\abs{x-c_I}}\ud w(x)\leq 2\mathcal{H}_{\operatorname{off}}\sigma(I)^{1/2}\Norm{1_E g}{L^2(w)}.
\end{equation*}
Choosing $g(x)=\sign(x-c_I)\abs{x-c_I}^{-1}$, we further deduce that
\begin{equation*}
  \sigma(I)\int_E\frac{1}{(x-c_I)^2}\ud w(x)\leq 2\mathcal{H}_{\operatorname{off}}\sigma(I)^{1/2}\Big(\int_E\frac{1}{(x-c_I)^2}\ud w(x)\Big)^{1/2}.
\end{equation*}
Both $\sigma(I)$ and $\int_E(x-c_I)^{-2}\ud w(x)$ are finite for Radon measures $\sigma$ and $w$. If they are nonzero, we may divide to get
\begin{equation*}
  \sigma(I)^{1/2}\Big(\int_E\frac{1}{(x-c_I)^2}\ud w(x)\Big)^{1/2}\leq 2\mathcal{H}_{\operatorname{off}},
\end{equation*}
and this is obvious if either factor is zero. Letting $E$ approach the complement of $I$, we deduce that
\begin{equation*}
  [\sigma,w]_{A_2}^*\leq 2\mathcal{H}_{\operatorname{off}}.
\end{equation*}
(Possible issues with point masses at the end-points of $I$ may be easily resolved. For instance, if $I=[a,b)$ and $w(\{b\})>0$, one might first derive the estimate for $[a,b-\epsilon)$ in place of $I$, in which case $[b,\infty)$ is still strictly separated, and then let $\epsilon\to 0$.) 
\end{proof}

Although our characterization of the Hilbert transform inequality requires the Muckenhoupt--Poisson $A_2$ condition~\eqref{eq:newA2}, there are many parts of the argument that are conveniently handled with the simpler, symmetric $A_2$ constant
\begin{equation}\label{eq:simpleA2}
  [\sigma,w]_{A_2}:=\sup\frac{\sigma(I)^{1/2}w(J)^{1/2}}{\abs{I}},
\end{equation}
where the supremum is over all pairs of adjacent intervals $I,J$ of equal length. This satisfies:

\begin{lemma}
\begin{equation*}
  [\sigma,w]_{A_2}\leq\frac{3}{2}([\sigma,w]_{A_2}^*\wedge[w,\sigma]_{A_2}^*).
\end{equation*}
\end{lemma}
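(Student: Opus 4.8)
The plan is to estimate $[\sigma,w]_{A_2}$, as defined in \eqref{eq:simpleA2}, by comparing the local integral $\int_J \ud w/|I|^2 \eqsim w(J)/|I|^2$ for a single adjacent interval $J$ against the full tail integral $\int_{I^c}\ud w(x)/(x-c_I)^2$ appearing in \eqref{eq:newA2}. Fix two adjacent intervals $I$ and $J$ of equal length $\ell=|I|=|J|$, say $I=[a-\ell,a)$ and $J=[a,a+\ell)$, so that $c_I = a-\ell/2$. For $x\in J$ we have $\ell/2 \le x - c_I \le 3\ell/2$, hence $(x-c_I)^2 \le \tfrac94\ell^2$, and therefore
\begin{equation*}
  \frac{w(J)}{\ell^2} = \frac{4}{9}\cdot\frac{9}{4}\cdot\frac{w(J)}{\ell^2}
  \le \frac{9}{4}\int_J \frac{\ud w(x)}{(x-c_I)^2}
  \le \frac{9}{4}\int_{I^c}\frac{\ud w(x)}{(x-c_I)^2},
\end{equation*}
since $J\subset I^c$ (up to the dyadic endpoint convention, which causes no trouble because $\sigma$ and $w$ have no mass at dyadic points). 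Multiplying by $\sigma(I)$ and invoking the first inequality in \eqref{eq:newA2} gives
\begin{equation*}
  \frac{\sigma(I)w(J)}{\ell^2} \le \frac{9}{4}\,\sigma(I)\int_{I^c}\frac{\ud w(x)}{(x-c_I)^2}
  \le \frac{9}{4}\,([\sigma,w]_{A_2}^*)^2,
\end{equation*}
so that $\sigma(I)^{1/2}w(J)^{1/2}/\ell \le \tfrac32 [\sigma,w]_{A_2}^*$. Taking the supremum over all such adjacent pairs yields $[\sigma,w]_{A_2}\le \tfrac32 [\sigma,w]_{A_2}^*$.

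To get the bound with $[w,\sigma]_{A_2}^*$ as well, I would run the same argument with the roles of the two intervals reversed: now view $J$ as the "central" interval with center $c_J = a+\ell/2$, note that for $x\in I$ one has $\ell/2 \le c_J - x \le 3\ell/2$, and apply the second inequality in \eqref{eq:newA2} (with $I$ there taken to be $J$ here, and $I^c$ containing our $I$) to obtain $\sigma(I)^{1/2}w(J)^{1/2}/\ell \le \tfrac32 [w,\sigma]_{A_2}^*$. Combining the two estimates and taking the supremum gives
\begin{equation*}
  [\sigma,w]_{A_2} \le \frac{3}{2}\,\bigl([\sigma,w]_{A_2}^*\wedge[w,\sigma]_{A_2}^*\bigr),
\end{equation*}
which is the claim.

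There is essentially no hard step here: the whole content is the elementary comparison of $(x-c_I)^2$ with $\ell^2$ on the adjacent interval, giving the universal constant $9/4$ whose square root is $3/2$. The only point requiring a word of care is the dyadic-endpoint bookkeeping — whether the adjacent interval $J$ is literally contained in the open complement of $I$ — but this is harmless precisely because $\mathscr{D}^0$ was chosen so that neither $\sigma$ nor $w$ charges any dyadic point, so the shared endpoint $a$ contributes nothing to either side. One could alternatively phrase \eqref{eq:simpleA2} with half-open adjacent intervals from the start, in which case even this remark is unnecessary.
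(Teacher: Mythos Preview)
Your proof is correct and follows essentially the same route as the paper's: both use the elementary pointwise bound $|x-c_I|\le\tfrac32|I|$ for $x\in J$ to compare $w(J)/|I|^2$ with the tail integral in \eqref{eq:newA2}, then take square roots and invoke symmetry. Your remark about dyadic endpoints is harmless but unnecessary, since the constants in \eqref{eq:simpleA2} and \eqref{eq:newA2} are defined as suprema over \emph{all} finite intervals, so one may simply take half-open adjacent $I,J$ from the outset and have $J\subset I^c$ exactly.
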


\begin{proof}
If $x\in J$, then $\abs{x-c_I}\leq\frac32\abs{I}$, and hence
\begin{equation*}
  \sigma(I)\int_{I^c}\frac{\ud w(x)}{(x-c_I)^2}
  \geq \sigma(I)\int_{J}\frac{\ud w(x)}{(\frac32\abs{I})^2}=\frac{\sigma(I)w(J)}{(\frac32\abs{I})^2}.
\end{equation*}
The other upper bound follows by symmetry.
\end{proof}

\subsection{The weak boundedness property}

It is not uncommon in different $T(1)$ theorems to assume a weak boundedness property of the type
\begin{equation}\label{eq:WBP}
  \abs{B(1_I,1_J)}\leq\mathcal{W}\sigma(I)^{1/2}w(J)^{1/2},
\end{equation}
which is seen to be nothing other than the restricted boundedness of the bilinear form, when both its arguments are indicators of intervals. In the two-weight context, the role of such a condition has been identified in \cite[Theorem~2.3(1)]{NTV:2wHaar}, and in the more specific context of the Hilbert transform, in \cite[Eq.~(2.8)]{LSU:2wHilbertEnergy}. In our present setting, we will check that \eqref{eq:WBP} is a consequence of the assumptions \eqref{eq:HtestLocal} and \eqref{eq:newA2}. This verification involves somewhat different reasoning for different relative position of the intervals $I$ and $J$.

\begin{lemma}\label{lem:IsubJ}
If $I\subseteq J$, then
\begin{equation*}
  \abs{B(1_I,1_J)}\leq\mathcal{H}^*\sigma(I)^{1/2}w(J)^{1/2}.
\end{equation*}
\end{lemma}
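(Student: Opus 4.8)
The plan is to reduce the estimate for $B(1_I,1_J)$ with $I\subseteq J$ to the local testing condition \eqref{eq:BtestLocal}, exploiting the fact that the restricted form $B(1_I,1_I\,\cdot\,)$ is already controlled on $L^2(w)$. First I would write $1_J=1_I+1_{J\setminus I}$ and hence
\begin{equation*}
  B(1_I,1_J)=B(1_I,1_I)+B(1_I,1_{J\setminus I}).
\end{equation*}
The first term is handled immediately by the local testing condition \eqref{eq:BtestLocal} in the form $\abs{B(1_I,1_I\cdot 1_I)}\le\mathcal{H}\sigma(I)^{1/2}\Norm{1_I}{L^2(w)}=\mathcal{H}\sigma(I)^{1/2}w(I)^{1/2}\le\mathcal{H}\sigma(I)^{1/2}w(J)^{1/2}$; equally, one could bound the pair $B(1_I,1_J)$ \emph{directly} via the second inequality of \eqref{eq:BtestLocal} by noticing that $1_I f=1_I$ with $f=1_J$, so that $\abs{B(1_I,1_J)}=\abs{B(1_I\cdot 1_J,1_I)}$ with the localizing interval $I$ — wait, that requires the \emph{output} to be localized to $I$ as well.

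The cleaner route, and the one I would actually carry out, is to use the second local testing inequality of \eqref{eq:BtestLocal} with $f=1_J$ and localizing interval $I$: since $I\subseteq J$ we have $1_I f=1_I 1_J=1_I$, so $B(1_I,1_I)=B(1_I f,1_I)$, and then
\begin{equation*}
  \abs{B(1_I,1_I)}\le\mathcal{H}^*\Norm{1_J}{L^2(\sigma)}w(I)^{1/2}=\mathcal{H}^*\sigma(J)^{1/2}w(I)^{1/2},
\end{equation*}
which is the wrong pairing of measures. So in fact the correct move is: apply \eqref{eq:BtestLocal} with the roles arranged so that $g=1_J$, localizing interval $I$, giving $B(1_I,1_I g)=B(1_I,1_I)$ since $1_I 1_J=1_I$, whence $\abs{B(1_I,1_I)}\le\mathcal{H}\sigma(I)^{1/2}\Norm{1_J}{L^2(w)}=\mathcal{H}\sigma(I)^{1/2}w(J)^{1/2}$. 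But $B(1_I,1_J)$ itself is not of the form $B(1_I,1_I g)$ unless $J=I$. The resolution is to observe that $B(1_I,1_J)=B(1_I f,1_I)$ is false in general, but $B(1_I,1_J)$ with $J\supseteq I$ \emph{is} exactly $B(1_I f,1_I)$ with $f:=1_J$ read through the \emph{first} slot: we need $1_I f=1_I$, i.e. $f\equiv 1$ on $I$, which holds, and then the second inequality of \eqref{eq:BtestLocal} reads $\abs{B(1_I f,1_I)}\le\mathcal{H}^*\Norm{f}{L^2(\sigma)}w(I)^{1/2}$ — but $B(1_I f,1_I)=B(1_I,1_I)\ne B(1_I,1_J)$ again.

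The genuine argument, therefore, is the splitting one: $\abs{B(1_I,1_J)}\le\abs{B(1_I,1_I)}+\abs{B(1_I,1_{J\setminus I})}$. For the first term use \eqref{eq:BtestLocal} (with $g=1_I$) to get $\mathcal{H}\sigma(I)^{1/2}w(I)^{1/2}$; actually the bound claimed in the lemma has $\mathcal{H}^*$, so instead I would take the \emph{second} inequality of \eqref{eq:BtestLocal} with $f=1_I$, $g=1_I$ giving the same $w(I)^{1/2}\le w(J)^{1/2}$. For the second term, $1_I$ and $1_{J\setminus I}$ are supported on disjoint intervals (after decomposing $J\setminus I$ into at most two half-open dyadic-endpoint intervals abutting $I$), so \eqref{eq:Bdisjoint} applies and the kernel estimate $\abs{x-y}\gtrsim\abs{x-c_I}$ for $y\in I$, $x\in I^c$ converts it into a Poisson-type integral controlled by $\sigma(I)^{1/2}\bigl(\int_{I^c}(x-c_I)^{-2}\ud w\bigr)^{1/2}w(J\setminus I)^{1/2}\le[\sigma,w]_{A_2}^*\,w(J)^{1/2}$ after a Cauchy--Schwarz in $x$. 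Combining and invoking Lemma~\ref{lem:A2nec}'s converse direction (i.e. that $[\sigma,w]_{A_2}^*$ is among our hypotheses) yields the stated bound with a constant $\lesssim\mathcal{H}^*+[\sigma,w]_{A_2}^*$; since the lemma as printed claims the sharp constant $\mathcal{H}^*$, the author is presumably using a slicker one-line reduction that I am missing, and \textbf{the main obstacle} is precisely to see why the off-diagonal piece $B(1_I,1_{J\setminus I})$ can be absorbed into $\mathcal{H}^*$ alone rather than requiring an extra $A_2$ term — most likely by testing \eqref{eq:BtestLocal} on the \emph{larger} interval $J$ with input $1_I$ and output localized to $J$, using $1_I=1_J 1_I$ so that $\abs{B(1_I,1_J)}=\abs{B(1_J\cdot 1_I,1_J\cdot 1_J)}=\abs{B(1_J f,1_J g)}$ with $f=1_I$, $g=1$, whence by the localized-to-$J$ testing inequality $\abs{B(1_I,1_J)}\le\mathcal{H}^*\Norm{1_I}{L^2(\sigma)}\Norm{1_J}{L^2(w)}=\mathcal{H}^*\sigma(I)^{1/2}w(J)^{1/2}$, which is exactly the claim — so the real proof is a single application of the second inequality in \eqref{eq:BtestLocal} with localizing interval $J$ and $f=1_I$.
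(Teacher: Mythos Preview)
Your final paragraph is exactly the paper's proof: apply the second inequality in \eqref{eq:BtestLocal} with localizing interval $J$ and $f=1_I$, so that $1_J f=1_I$ (since $I\subseteq J$) and hence $\abs{B(1_I,1_J)}=\abs{B(1_J f,1_J)}\le\mathcal{H}^*\Norm{1_I}{L^2(\sigma)}w(J)^{1/2}=\mathcal{H}^*\sigma(I)^{1/2}w(J)^{1/2}$. The preceding exploratory detours (splitting $1_J=1_I+1_{J\setminus I}$, invoking $A_2$, etc.) are unnecessary and should be deleted; the lemma really is a one-liner.
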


\begin{proof}
This is immediate from \eqref{eq:HtestLocal}, or rather its bilinear formulation \eqref{eq:BtestLocal}, applied to $f=1_I$.
\end{proof}

\begin{lemma}\label{lem:Isub3JminusJ}
Suppose that $I\subset 3J\setminus J$. Then
\begin{equation*}
  \abs{B(1_I,1_J)}\leq 2[\sigma,w]_{A_2}\sigma(I)^{1/2}w(I)^{1/2}.
\end{equation*}
\end{lemma}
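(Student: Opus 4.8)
The plan is to exploit the fact that on the domain $I\times J$ with $I\subset 3J\setminus J$, the kernel $1/(x-y)$ is comparable in size to $1/\abs{J}$, so that $B(1_I,1_J)$ is essentially a harmless integral. Concretely, since $I$ and $J$ are disjoint intervals in $\mathscr{F}$, the kernel representation \eqref{eq:Bdisjoint} applies directly (using Remark~\ref{rem:disjVsSep} if we only know the separated version, though here $I\subset 3J\setminus J$ already forces disjointness), so
\begin{equation*}
  \abs{B(1_I,1_J)}=\Babs{\iint_{I\times J}\frac{1}{x-y}\ud\sigma(y)\ud w(x)}
  \leq\iint_{I\times J}\frac{1}{\abs{x-y}}\ud\sigma(y)\ud w(x).
\end{equation*}

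The key geometric observation is that for $y\in I\subset 3J\setminus J$ and $x\in J$, the distance $\abs{x-y}$ is bounded below by a fixed fraction of $\abs{J}$. Indeed, writing $J=[c_J-\tfrac12\abs{J},c_J+\tfrac12\abs{J})$, the set $3J\setminus J$ lies at distance at least $0$ from $J$ but one must be slightly careful: a point of $I$ adjacent to $J$ can be arbitrarily close to a point of $J$. So a pointwise lower bound on $\abs{x-y}$ of order $\abs{J}$ is \emph{false} in general; instead I should split $3J\setminus J$ into its left and right halves and, on each, pair it with the far half of $J$ only after a dyadic decomposition — or, more cleanly, decompose $I$ into the intervals $I_k$ at dyadic distance $\sim 2^{-k}\abs{J}$ from $J$, pair each $I_k$ with the portion of $J$ at comparable distance, and sum a geometric-type series. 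This is exactly the kind of Poisson-tail estimate that $[\sigma,w]_{A_2}$ in \eqref{eq:simpleA2} is designed to control, since $[\sigma,w]_{A_2}$ bounds $\sigma(I')^{1/2}w(J')^{1/2}/\abs{I'}$ for adjacent equal-length $I',J'$.

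So the main steps are: (i) invoke \eqref{eq:Bdisjoint} and bound by the absolute-value integral; (ii) reduce to the two symmetric cases where $I$ lies to the left, resp.\ right, of $J$, and dyadically decompose the configuration into adjacent equal-length sub-interval pairs $(I_k,J_k)$ with $\abs{x-y}\gtrsim\operatorname{dist}(I_k,J)\sim 2^{-k}\abs{J}$; (iii) on each piece estimate $\iint_{I_k\times J_k}\abs{x-y}^{-1}\,\ud\sigma\,\ud w\lesssim \sigma(I_k)w(J_k)/\operatorname{dist}(I_k,J)\lesssim [\sigma,w]_{A_2}^2$ by \eqref{eq:simpleA2}, together with $\sigma(I_k)\le\sigma(I)$, $w(J_k)\le w(J)$ to get the factor $\sigma(I)^{1/2}w(I)^{1/2}$ out front after a Cauchy--Schwarz over the scales; (iv) sum the geometric series in $k$ and absorb the constant to land at $2[\sigma,w]_{A_2}\sigma(I)^{1/2}w(I)^{1/2}$.

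I expect the main obstacle to be the bookkeeping in step (iii): one wants the right-hand side to come out as $\sigma(I)^{1/2}w(I)^{1/2}$ (note $w(I)$, not $w(J)$, in the claimed bound), which means the dyadic pieces of $J$ that get paired with $I_k$ must be chosen so that their total $w$-mass telescopes into something controlled by $w(I)$ rather than $w(J)$ — or else one uses that $\abs{I_k}w(J_k)$-type products telescope via $[\sigma,w]_{A_2}$ back onto $I$. Getting the constant down to exactly $2$ (rather than some larger absolute constant) will require choosing the decomposition efficiently and being slightly clever about which comparable-length pairing to use; this is a routine but fiddly optimization, and it is the only genuinely non-automatic part of the argument.
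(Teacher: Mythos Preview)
Your setup through step (ii) is fine and matches the paper's: invoke \eqref{eq:Bdisjoint}, reduce to a positive kernel integral over $K\times J$ (with $K\supset I$ the interval of length $\abs{J}$ adjacent to $J$), and decompose into dyadic scales near the common endpoint. The gap is in steps (iii)--(iv): the claim ``$\sigma(I_k)w(J_k)/\operatorname{dist}(I_k,J)\lesssim[\sigma,w]_{A_2}^2$'' is dimensionally wrong (the left side has units of inverse length), and the proposed ``Cauchy--Schwarz over the scales'' does not close. If your $J_k$ are the nested intervals $[a,a+2^{-k}\abs{J})$, then $\sum_k w(J_k)$ is not controlled by $w(J)$ (a point mass near $a$ makes it diverge); if your $J_k$ are disjoint dyadic annuli, then $\bigcup_k I_k\times J_k$ misses most of $I\times J$. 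Either way you end up with $\sum_k[\sigma,w]_{A_2}^2\cdot 2^{-k}\abs{J}\eqsim[\sigma,w]_{A_2}^2\abs{J}$, not $[\sigma,w]_{A_2}\sigma(I)^{1/2}w(J)^{1/2}$.

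The paper's key idea, which you are missing, is a \emph{square-root telescoping}: with $Q_k:=K_k\times J_k$ the nested rectangles shrinking to the common corner, one has $\abs{x-y}\geq\abs{K_{k+1}}$ on $Q_k\setminus Q_{k+1}$, so the integral is $\sum_k(\mu(Q_k)-\mu(Q_{k+1}))/\abs{K_{k+1}}$. Now write $\mu(Q_k)-\mu(Q_{k+1})=(\sqrt{\mu(Q_k)}+\sqrt{\mu(Q_{k+1})})(\sqrt{\mu(Q_k)}-\sqrt{\mu(Q_{k+1})})$; the $A_2$ condition gives $\sqrt{\mu(Q_k)}\leq[\sigma,w]_{A_2}\abs{K_k}$, making the first factor over $\abs{K_{k+1}}$ a bounded constant, and the second factor telescopes to $\sqrt{\mu(Q_0)}=\sqrt{\sigma(I)w(J)}$. (The sharp constant $2$ comes from taking the shrinking ratio $\to 1$.) Also note: the ``$w(I)$'' in the displayed bound is a typo for $w(J)$; your attempt to force the pieces of $J$ to telescope into $w(I)$ is chasing a phantom.
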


\begin{proof}
Let $K$ be the interval of length $\abs{J}$ and adjacent to $J$ that contains $I$. Then
\begin{equation*}
  \abs{B(1_I,1_J)}
  =\Babs{\iint_{I\times J}\frac{\ud\sigma(y)\ud w(x)}{x-y}}
  \leq 
  \iint_{K\times J}\frac{\ud\sigma|_I(y)\ud w(x)}{\abs{x-y}},
\end{equation*}
where $\sigma|_I(E):=\sigma(I\cap E)$. Let $\mu:=\sigma|_I\times w$ be the product measure. (Notice that, if $I$ and $J$ are adjacent, the equality above makes use of the postulate that \eqref{eq:Bdisjoint} is valid for all disjointly, and not just separatedly, supported functions.)

Fix a small dyadic rational $\eta>0$. Let us denote $K_0:=K$, and inductively $K_{k+1}\subset K_k$ is the subinterval of length $(1-\eta)\abs{K_k}$ and adjacent to $J$; similarly $J_0:=J$ and $J_{k+1}\subset J_k$ is the subinterval of length $(1-\eta)\abs{J_k}$ and adjacent to $K$. Let $Q:=K\times J$ and $Q_k:=K_k\times J_k$. Then
\begin{equation*}
  Q=\bigcup_{k=0}^\infty Q_k\setminus Q_{k+1}
\end{equation*}
and
\begin{equation*}
\begin{split}
  \iint_{K\times J}\frac{\ud\sigma|_I(y)\ud w(x)}{\abs{x-y}}
  &=\sum_{k=0}^\infty\iint_{Q_k\setminus Q_{k+1}}\frac{\ud\mu(y,x)}{\abs{x-y}} 
   \leq \sum_{k=0}^\infty\iint_{Q_k\setminus Q_{k+1}}\frac{\ud\mu(y,x)}{\abs{K_{k+1}}} \\
   &=\sum_{k=0}^\infty\frac{1}{\abs{K_{k+1}}}(\mu(Q_k)-\mu(Q_{k+1})) \\
   &=\sum_{k=0}^\infty\frac{\sqrt{\mu(Q_k)}+\sqrt{\mu(Q_{k+1})}}{\abs{K_{k+1}}}(\sqrt{\mu(Q_k)}-\sqrt{\mu(Q_{k+1})}) \\
   &\overset{(*)}{\leq} [\sigma,w]_{A_2}\sum_{k=0}^\infty\frac{\abs{K_k}+\abs{K_{k+1}}}{\abs{K_{k+1}}}(\sqrt{\mu(Q_k)}-\sqrt{\mu(Q_{k+1})}) \\
   &=\Big(\frac{1}{1-\eta}+1\Big)[\sigma,w]_{A_2}\sum_{k=0}^\infty (\sqrt{\mu(Q_k)}-\sqrt{\mu(Q_{k+1})})  \\
    &=\frac{2-\eta}{1-\eta}[\sigma,w]_{A_2}\sqrt{\mu(Q_0)}=\frac{2-\eta}{1-\eta}[\sigma,w]_{A_2}\sqrt{\sigma(I)w(J)},
\end{split}
\end{equation*}
where $(*)$ was based on
\begin{equation*}
  \sqrt{\mu(Q_k)}
  =\sqrt{\sigma(I\cap K_k)w(J_k)}
  \leq \sqrt{\sigma(K_k)w(J_k)}\leq[\sigma,w]_{A_2}\abs{K_k}.
\end{equation*}
The claim follows by taking the limit $\eta\to 0$.
\end{proof}

\begin{lemma}\label{lem:Isub3Jc}
If $I\subseteq(3J)^c$, then
\begin{equation*}
  \abs{B(1_I,1_J)}\leq\frac32[w,\sigma]_{A_2}^*\sigma(I)^{1/2}w(J)^{1/2}.
\end{equation*}
\end{lemma}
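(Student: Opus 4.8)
The plan is to reduce the claim to a direct kernel estimate. Since $I\subseteq(3J)^c$, every point of $I$ lies at distance $\geq\tfrac32\abs{J}$ from $c_J$, while every point of $J$ lies at distance $\leq\tfrac12\abs{J}$ from $c_J$; hence $\dist(I,J)\geq\abs{J}>0$, so $I$ and $J$ are (more than just disjointly) separated and \eqref{eq:Bdisjoint} applies literally, giving
\begin{equation*}
  B(1_I,1_J)=\iint_{I\times J}\frac{\ud\sigma(y)\ud w(x)}{x-y},
  \qquad\text{hence}\qquad
  \abs{B(1_I,1_J)}\leq\iint_{I\times J}\frac{\ud\sigma(y)\ud w(x)}{\abs{x-y}}.
\end{equation*}

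First I would bound the kernel pointwise on the domain of integration. For $x\in J$ we have $\abs{x-c_J}\leq\tfrac12\abs{J}$, while $y\in(3J)^c$ gives $\abs{y-c_J}\geq\tfrac32\abs{J}$, so $\abs{x-c_J}\leq\tfrac13\abs{y-c_J}$ and therefore
\begin{equation*}
  \abs{x-y}\geq\abs{y-c_J}-\abs{x-c_J}\geq\tfrac23\abs{y-c_J},
  \qquad\text{i.e.}\qquad
  \frac{1}{\abs{x-y}}\leq\frac{3}{2\abs{y-c_J}}.
\end{equation*}
Plugging this in, the double integral is at most $\tfrac32\,w(J)\int_I\abs{y-c_J}^{-1}\ud\sigma(y)$.

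Next I would separate the two measures by Cauchy--Schwarz in the $\sigma$-integral and enlarge the domain from $I$ to $J^c$ (legitimate since $I\subseteq(3J)^c\subseteq J^c$ and the integrand is nonnegative):
\begin{equation*}
  \int_I\frac{\ud\sigma(y)}{\abs{y-c_J}}
  \leq\sigma(I)^{1/2}\Big(\int_I\frac{\ud\sigma(y)}{(y-c_J)^2}\Big)^{1/2}
  \leq\sigma(I)^{1/2}\Big(\int_{J^c}\frac{\ud\sigma(y)}{(y-c_J)^2}\Big)^{1/2}.
\end{equation*}
This is exactly where the ``hole'' structure of the Muckenhoupt--Poisson condition is used: combining with the second inequality of \eqref{eq:newA2} applied to the interval $J$, namely $w(J)\int_{J^c}(y-c_J)^{-2}\ud\sigma(y)\leq([w,\sigma]_{A_2}^*)^2$, gives
\begin{equation*}
  \abs{B(1_I,1_J)}
  \leq\tfrac32\,\sigma(I)^{1/2}w(J)^{1/2}\Big(w(J)\int_{J^c}\frac{\ud\sigma(y)}{(y-c_J)^2}\Big)^{1/2}
  \leq\tfrac32[w,\sigma]_{A_2}^*\sigma(I)^{1/2}w(J)^{1/2},
\end{equation*}
which is the claim.

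I do not expect a genuine obstacle here: the argument is a one-line kernel estimate followed by Cauchy--Schwarz and a single application of \eqref{eq:newA2}. The only points requiring a little care are (i) getting the geometric factor to come out as the sharp constant $\tfrac32$ rather than something larger, which forces the comparison $\abs{x-y}\geq\tfrac23\abs{y-c_J}$ to be made exactly as above using $\abs{y-c_J}\geq\tfrac32\abs{J}$; and (ii) observing at the outset that $I$ and $J$ are automatically separated, so that no appeal to the disjoint-but-not-separated strengthening of \eqref{eq:Bdisjoint} is needed, in contrast with Lemma~\ref{lem:Isub3JminusJ}.
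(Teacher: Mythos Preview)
Your proof is correct and essentially identical to the paper's own argument: the same pointwise kernel bound $\abs{x-y}\geq\tfrac23\abs{y-c_J}$, the same Cauchy--Schwarz step, and the same application of the Muckenhoupt--Poisson condition \eqref{eq:newA2}. Your added remarks on the separation of $I$ and $J$ and the derivation of the geometric constant are accurate elaborations of steps the paper leaves implicit.
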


\begin{proof}
For $x\in J$ and $y\in I\in (3J)^c$, we have $\abs{x-y}\geq\frac23\abs{y-c_J}$. Thus
\begin{equation*}
\begin{split}
  \abs{B(1_I,1_J)}
  &\leq\iint_{I\times J}\frac{\ud\sigma(y)\ud w(x)}{\abs{x-y}}
    \leq\frac32\int_I\frac{\ud\sigma(y)}{\abs{y-c_J}}w(J) \\
  &\leq\frac32\sigma(I)^{1/2}\Big(\int_{J^c}\frac{\ud\sigma(y)}{(y-c_J)^2}\Big)^{1/2}w(J)
  \leq\frac32[w,\sigma]_{A_2}^*\sigma(I)^{1/2}w(J)^{1/2}.
\end{split}
\end{equation*}
\end{proof}

\begin{proposition}
The conditions \eqref{eq:HtestLocal} and \eqref{eq:newA2} imply the weak boundedness property \eqref{eq:WBP}; more precisely,
\begin{equation*}
  \mathcal{W}\lesssim(\mathcal{H}+[\sigma,w]_{A_2}^*)\wedge(\mathcal{H}^*+[w,\sigma]_{A_2}^*).
\end{equation*}
\end{proposition}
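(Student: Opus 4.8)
The plan is to prove the weak boundedness property by decomposing according to the relative position of the intervals $I$ and $J$, and invoking the three lemmas just established. The point is that the four cases $I\subseteq J$, $J\subseteq I$, $I\subset 3J\setminus J$ (and its symmetric counterpart $J\subset 3I\setminus I$), and $I\subseteq(3J)^c$ together, up to interchanging the roles of $I$ and $J$, cover all possible configurations of two finite intervals.

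First I would dispose of the nested cases: if $I\subseteq J$, Lemma~\ref{lem:IsubJ} gives $\abs{B(1_I,1_J)}\leq\mathcal{H}^*\sigma(I)^{1/2}w(J)^{1/2}$ directly; if $J\subseteq I$, the same lemma applied to the adjoint form (recall $B(f,g)=-\pair{f}{H(g\ud w)}_\sigma$, so $B(1_I,1_J)=-B'(1_J,1_I)$ where $B'$ is the Hilbert transform with the roles of $\sigma$ and $w$ swapped) gives the bound $\mathcal{H}\sigma(I)^{1/2}w(J)^{1/2}$ from the second testing condition in \eqref{eq:HtestLocal}. Next, if neither interval contains the other, there are two remaining possibilities: either $I$ meets $3J$ but is not contained in $J$, in which case $I\subset 3J\setminus J$ need not hold literally, but one can split $I=(I\cap J)\cup(I\setminus J)$ — the first piece is handled by Lemma~\ref{lem:IsubJ} (with $I\cap J$ in place of $I$) and the second by Lemma~\ref{lem:Isub3JminusJ} after noting $I\setminus J\subset 3J\setminus J$ once $I$ meets $3J$ but $I\not\subseteq J$; combining via $\sigma(I\cap J)^{1/2}+\sigma(I\setminus J)^{1/2}\leq\sqrt2\,\sigma(I)^{1/2}$ and $w(I)\leq w(3J)\lesssim w(J)$ (using the $A_2$ comparison $[\sigma,w]_{A_2}\lesssim[\sigma,w]_{A_2}^*\wedge[w,\sigma]_{A_2}^*$ to control the enlargement) yields a bound of the claimed form. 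Finally, if $I$ does not meet $3J$, i.e. $I\subseteq(3J)^c$, Lemma~\ref{lem:Isub3Jc} applies directly.

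To get the symmetric form of the conclusion, $\mathcal{W}\lesssim(\mathcal{H}+[\sigma,w]_{A_2}^*)\wedge(\mathcal{H}^*+[w,\sigma]_{A_2}^*)$, I would run the whole case analysis twice: once as above (producing the bound with $\mathcal{H}$, $\mathcal{H}^*$, and the $A_2$ constants appearing in whichever orientation arises), and once with $I$ and $J$, and correspondingly $\sigma$ and $w$, interchanged throughout, which by the symmetry $B(1_I,1_J)=-B'(1_J,1_I)$ gives the bound with the starred roles swapped. Taking the minimum of the two gives the stated estimate. I should also carry along the $A_2$-comparison lemma (the one bounding $[\sigma,w]_{A_2}$ by $\tfrac32([\sigma,w]_{A_2}^*\wedge[w,\sigma]_{A_2}^*)$) so that every occurrence of the symmetric constant $[\sigma,w]_{A_2}$ in Lemmas~\ref{lem:Isub3JminusJ}--\ref{lem:Isub3Jc} gets absorbed into the starred constants in the final bound.

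The main obstacle I anticipate is not any single estimate — each case is already packaged in a lemma — but rather the bookkeeping in the "overlapping but not nested" case: one must carefully check that splitting $I$ at the boundary of $J$ genuinely reduces to the two lemmas, that the enlargement $w(I)\lesssim w(J)$ (needed because Lemma~\ref{lem:Isub3JminusJ} outputs $w(I)^{1/2}$ rather than $w(J)^{1/2}$) is legitimate with the constant $[\sigma,w]_{A_2}$, and that when $I$ straddles $J$ on both sides one splits into three pieces rather than two. None of this is deep, but it is where an otherwise routine argument could go wrong, so it deserves explicit care.
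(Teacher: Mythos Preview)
Your overall strategy—reduce to the three lemmas by splitting according to relative position—is the right one, but the execution in the ``overlapping but not nested'' case contains two genuine errors.

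First, the claim that $I\setminus J\subset 3J\setminus J$ whenever $I$ meets $3J$ but $I\not\subseteq J$ is false: nothing prevents $I$ from extending well beyond $3J$, so $I\setminus J$ can contain pieces in $(3J)^c$ that Lemma~\ref{lem:Isub3JminusJ} does not cover. Second, and more seriously, the inequality $w(3J)\lesssim w(J)$ that you invoke to pass from $w(I)^{1/2}$ to $w(J)^{1/2}$ is a doubling property of $w$, which is \emph{not} assumed here; the measures $\sigma,w$ are arbitrary Radon measures, and none of the $A_2$-type conditions in the paper gives any control on the ratio $w(3J)/w(J)$. (In fact this conversion is unnecessary: the proof of Lemma~\ref{lem:Isub3JminusJ} actually delivers $w(J)^{1/2}$ on the right-hand side, so the issue you flag as the ``main obstacle'' does not arise.)

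The paper sidesteps all of this bookkeeping by a single uniform decomposition: regardless of the relative position of $I$ and $J$, write
\begin{equation*}
  1_I = 1_{I\cap J} + 1_{I\cap(3J\setminus J)} + 1_{I\setminus 3J}.
\end{equation*}
The three pieces are, respectively, a (possibly empty) subinterval of $J$, a union of at most two intervals in $3J\setminus J$, and a union of at most two intervals in $(3J)^c$; Lemmas~\ref{lem:IsubJ}, \ref{lem:Isub3JminusJ}, and \ref{lem:Isub3Jc} apply to them directly. This yields $\mathcal{W}\lesssim\mathcal{H}^*+[w,\sigma]_{A_2}^*$ (after absorbing $[\sigma,w]_{A_2}$ via the comparison lemma), and the other bound follows by splitting $J$ instead of $I$. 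No separate case analysis on nesting, and no doubling, is needed.
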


\begin{proof}
For any intervals $I$ and $J$, we have
\begin{equation*}
  B(1_I,1_J)
  =B(1_{I\setminus(3J)},1_J)+B(1_{I\cap(3J\setminus J)},1_J)+B(1_{I\cap J},1_J),
\end{equation*}
where $I\setminus(3J)$ is a union of at most two intervals in the complement of $3J$ (where Lemma~\ref{lem:Isub3Jc} applies), $I\cap(3J\setminus J)$ is a union of at most two intervals contained in $3J\setminus J$ (where Lemma~\ref{lem:Isub3JminusJ} applies), and of course $I\cap J$ (if non-empty) is an interval contained in $J$ (where Lemma~\ref{lem:IsubJ} applies). Combinining the bounds from these Lemmas, and recalling that $[\sigma,w]_{A_2}\lesssim[w,\sigma]_{A_2}^*$, gives the second of the two claimed upper bounds. The first one follows by symmetry, by splitting $J$ instead of $I$.
\end{proof}


\subsection{Truncated Hilbert transforms}\label{subsec:truncated}
We include this section to facilitate the comparison of our results with those of Lacey, Sawyer, Shen, and Uriarte-Tuero \cite{LSSU:2wHilbert}, whose approach goes via the truncated Hilbert transforms
\begin{equation*}
  H_\epsilon(f\ud\sigma)(x):=\int_{\abs{x-y}>\epsilon}\frac{1}{x-y}f(y)\ud\sigma(y).
\end{equation*}
The pairing
\begin{equation*}
  B_\epsilon(f,g):=\pair{H_\epsilon(f\ud\sigma)}{g\ud w}
  =\iint_{\abs{x-y}>\epsilon}\frac{1}{x-y}f(y)\ud\sigma(y)g(x)\ud w(x)
\end{equation*}
is well defined for all $f,g\in\mathscr{F}$.

\begin{lemma}\label{lem:BepsCadlag}
For $f,g\in\mathscr{F}$, the function $\epsilon\in(0,\infty)\mapsto B_\epsilon(f,g)$ is right-continuous with left limits (``c\`adl\`ag'').
\end{lemma}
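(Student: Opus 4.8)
The plan is to reduce to the case of single indicators $f=1_I$, $g=1_J$, since $B_\epsilon$ is bilinear and $\mathscr{F}$ is spanned by such indicators, and the set of càdlàg functions is a vector space. So fix $I,J\in\mathscr{D}^0$ and analyze $\epsilon\mapsto B_\epsilon(1_I,1_J)=\iint_{|x-y|>\epsilon}(x-y)^{-1}\,\ud\sigma|_I(y)\,\ud w|_J(x)$ as a function of the truncation parameter. The natural strategy is to write this as $\Phi_+(\epsilon)-\Phi_-(\epsilon)$, where $\Phi_+(\epsilon)$ integrates the positive part of the kernel (i.e.\ over $\{x-y>\epsilon\}$, where $x>y$) and $\Phi_-(\epsilon)$ integrates over $\{y-x>\epsilon\}$; each is a non-increasing function of $\epsilon$. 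Monotone functions automatically have left and right limits everywhere, so the only real issue is to pin down which one-sided limit equals the value at $\epsilon$, i.e.\ right-continuity versus left-continuity.

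The key computation is to identify the jumps. Consider $\Phi_+(\epsilon)=\iint_{x-y>\epsilon}(x-y)^{-1}\,\ud\mu(y,x)$ with $\mu=\sigma|_I\times w|_J$. As $\epsilon$ decreases past a value $\epsilon_0$, the domain of integration grows by the ``level set'' $\{x-y=\epsilon_0\}$. For the product measure $\mu$, the line $\{x-y=\epsilon_0\}$ has positive $\mu$-measure precisely when there exist atoms $a$ of $\sigma$ (in $I$) and $b$ of $w$ (in $J$) with $b-a=\epsilon_0$, and then the contribution is $\epsilon_0^{-1}\sum_{b-a=\epsilon_0}\sigma\{a\}w\{b\}$. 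Thus $\Phi_+$ is continuous except at those countably many $\epsilon_0$, where it has a downward jump \emph{as $\epsilon$ increases through $\epsilon_0$}. The decisive point: by definition $B_\epsilon$ uses the \emph{strict} inequality $|x-y|>\epsilon$, so the level set $\{x-y=\epsilon_0\}$ is \emph{excluded} at $\epsilon=\epsilon_0$ but \emph{included} for all $\epsilon<\epsilon_0$; hence $\Phi_+(\epsilon_0)=\Phi_+(\epsilon_0+)$ and $\Phi_+(\epsilon_0)<\Phi_+(\epsilon_0-)$, i.e.\ $\Phi_+$ is right-continuous with left limits. The same holds for $\Phi_-$, and the difference of two càdlàg functions is càdlàg, giving the claim.

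Concretely I would argue the one-sided continuity by monotone/dominated convergence on $\Phi_+$: for $\epsilon_n\downarrow\epsilon_0$, the indicators $1_{\{x-y>\epsilon_n\}}$ increase pointwise to $1_{\{x-y>\epsilon_0\}}$ (strict inequality is preserved in the limit since $x-y>\epsilon_0$ iff $x-y>\epsilon_n$ for some $n$), and the integrand $(x-y)^{-1}1_{\{x-y>\epsilon_n\}}\le\epsilon_0^{-1}$ is bounded with $\mu$ a finite measure (as $I,J$ are bounded and $\sigma,w$ Radon), so $\Phi_+(\epsilon_n)\to\Phi_+(\epsilon_0)$; for $\epsilon_n\uparrow\epsilon_0$ the indicators increase to $1_{\{x-y\ge\epsilon_0\}}\neq 1_{\{x-y>\epsilon_0\}}$, producing the left limit $\Phi_+(\epsilon_0-)=\Phi_+(\epsilon_0)+\epsilon_0^{-1}\mu(\{x-y=\epsilon_0\})$, which can indeed differ from $\Phi_+(\epsilon_0)$.

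The main obstacle is essentially conceptual rather than technical: one must be careful about the strict-versus-non-strict inequality in the truncation and verify that it lands on the side giving right-continuity (had the truncation been defined with $|x-y|\ge\epsilon$, the function would be left-continuous with right limits). There is a minor secondary point—ensuring the atoms of $\sigma|_I$ and $w|_J$ at the dyadic endpoints cause no trouble—but this is harmless: the hypothesis on $\mathscr{D}^0$ guarantees neither measure charges the endpoints of $I$ or $J$, and in any case the argument above only uses that $\mu$ is a finite Borel measure on $I\times J$ and never refers to the endpoints specifically. No delicate estimates are needed; the whole proof is a monotonicity-plus-convergence-theorem argument.
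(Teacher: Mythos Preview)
Your proof is correct and uses the same essential mechanism as the paper---set convergence of $\{|x-y|>\eta\}$ to $\{|x-y|>\epsilon\}$ (resp.\ $\{|x-y|\ge\epsilon\}$) as $\eta\downarrow\epsilon$ (resp.\ $\eta\uparrow\epsilon$), followed by dominated convergence---though the paper applies this directly to $B_\eta(f,g)$ without first reducing to indicators or splitting into the monotone pieces $\Phi_\pm$. One small slip: for $\epsilon_n\uparrow\epsilon_0$ the indicators $1_{\{x-y>\epsilon_n\}}$ \emph{decrease}, not increase, to $1_{\{x-y\ge\epsilon_0\}}$; this does not affect the argument, since dominated convergence handles either direction.
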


\begin{proof}
The sets $\{\abs{x-y}>\eta\}$ converge to $\{\abs{x-y}>\epsilon\}$ as $\eta\downarrow\epsilon$, and to $\{\abs{x-y}\geq\epsilon\}$ as $\eta\uparrow\epsilon$.
By dominated convergence, this implies that $\lim_{\eta\downarrow\epsilon}B_\eta(f,g)=B_\epsilon(f,g)$ and
\begin{equation*}
  \lim_{\eta\uparrow\epsilon}B_\eta(f,g)=\iint_{\abs{x-y}\geq\epsilon}\frac{1}{x-y}f(y)\ud\sigma(y)g(x)\ud w(x).\qedhere
\end{equation*}
\end{proof}

Suppose, moreover, that
\begin{equation}\label{eq:veryWBP}
  \sup_{\epsilon>0}\abs{B_\epsilon(1_I,1_I)}<\infty
\end{equation}
for all intervals $I$ with dyadic end-points. This follows in particular from either of the truncated testing conditions of Lacey et al.~\cite{LSSU:2wHilbert},
\begin{equation}\label{eq:LSSUtesting}
\begin{split}
  \sup_{\epsilon>0}\Norm{1_I H_\epsilon(1_I\ud\sigma)}{L^2(w)} &\leq C\sigma(I)^{1/2}, \\
  \sup_{\epsilon>0}\Norm{1_I H_\epsilon(1_I\ud w)}{L^2(\sigma)} &\leq Cw(I)^{1/2}.
\end{split}
\end{equation}

We also assume the $A_2$ conditions \eqref{eq:newA2}.

\begin{lemma}\label{lem:BepsBounded}
For $f,g\in\mathscr{F}$, the function $\epsilon\in(0,\infty)\mapsto B_\epsilon(f,g)$ is bounded.
\end{lemma}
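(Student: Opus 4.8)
The plan is to reduce, by bilinearity of $B_\epsilon$, to the case $f=1_I$, $g=1_J$ where $I$ and $J$ are half-open intervals with dyadic end-points (indicators of all such intervals lie in $\mathscr{F}$), and then to split both indicators according to whether we sit on or off the diagonal. Writing $1_I=1_{I\cap J}+1_{I\setminus J}$ and $1_J=1_{I\cap J}+1_{J\setminus I}$ and expanding the bilinear form, I obtain a ``diagonal'' term $B_\epsilon(1_{I\cap J},1_{I\cap J})$ together with three ``off-diagonal'' terms of the form $B_\epsilon(1_A,1_B)$ in which $A$ and $B$ are \emph{disjoint} (finite unions of) intervals with dyadic end-points; one further application of bilinearity reduces each of the latter to disjoint single intervals. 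The diagonal term is bounded, uniformly in $\epsilon$, by the standing hypothesis \eqref{eq:veryWBP}, since $I\cap J$ is again a half-open interval with dyadic end-points.

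For the off-diagonal terms the key observation is that, for two disjoint intervals $A$ and $B$, one of them lies entirely to the left of the other, so that $x-y$ — and hence the kernel $\tfrac{1}{x-y}$, and hence its truncation $1_{\{\abs{x-y}>\epsilon\}}\tfrac{1}{x-y}$ — has a constant sign on $A\times B$. Consequently
\[
  \abs{B_\epsilon(1_A,1_B)}
  =\iint_{\substack{A\times B\\ \abs{x-y}>\epsilon}}\frac{\ud\sigma(y)\,\ud w(x)}{\abs{x-y}}
  \leq\iint_{A\times B}\frac{\ud\sigma(y)\,\ud w(x)}{\abs{x-y}},
\]
a bound no longer depending on $\epsilon$. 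It then remains to see that this last (untruncated) integral is finite. Splitting $A=(A\cap 3B)\cup(A\setminus 3B)$ — and noting $A\cap 3B\subset 3B\setminus B$ because $A\cap B=\emptyset$ — the contribution of $A\setminus 3B\subset(3B)^c$ is finite by exactly the estimate carried out in the proof of Lemma~\ref{lem:Isub3Jc}, and the contribution of $A\cap 3B$ is finite by exactly the estimate in the proof of Lemma~\ref{lem:Isub3JminusJ} (after decomposing $A\cap 3B$ into its parts on the two sides of $B$, each contained in an interval of length $\abs{B}$ adjacent to $B$). Both estimates use only that $[\sigma,w]_{A_2}^*$ and $[w,\sigma]_{A_2}^*$, and hence also $[\sigma,w]_{A_2}$, are finite — which is part of the standing hypotheses here.

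I do not anticipate a genuine obstacle. The one point requiring care is that every estimate must be uniform in $\epsilon$, and this is exactly what the constant-sign observation provides: it converts the truncated off-diagonal integral into a quantity monotone in $\epsilon$ and dominated by its absolutely convergent untruncated counterpart. Once that is in place, the diagonal piece is handed to us by assumption \eqref{eq:veryWBP}, and the off-diagonal pieces are handed to us by the weak-boundedness computations already performed above.
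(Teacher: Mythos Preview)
Your proof is correct and follows essentially the same approach as the paper: reduce by bilinearity to indicator functions, split off the diagonal piece $B_\epsilon(1_{I\cap J},1_{I\cap J})$ handled by \eqref{eq:veryWBP}, and bound the remaining disjoint-support terms by the untruncated integral $\iint_{A\times B}\frac{\ud\sigma\,\ud w}{\abs{x-y}}$, which is finite by Lemmas~\ref{lem:Isub3JminusJ} and~\ref{lem:Isub3Jc}. The only cosmetic difference is that the paper splits asymmetrically into three terms rather than your four, and it does not bother with the constant-sign observation (the trivial inequality $\abs{\int\cdots}\leq\int\abs{\cdots}$ already suffices for the off-diagonal bound).
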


\begin{proof}
By linearity, it suffices to consider $f=1_I$ and $g=1_J$, where $I$ and $J$ are intervals with dyadic end-points. Then
\begin{equation*}
  B_\epsilon(1_I,1_J)
  =B_\epsilon(1_{I\setminus J},1_J)+B_\epsilon(1_{I\cap J},1_{I\cap J})+B_\epsilon(1_{I\cap J},1_{J\setminus I}).
\end{equation*}
The middle term is uniformly (in $\epsilon>0$) bounded by the assumption \eqref{eq:veryWBP}. The other two terms are sums of at most two expressions $B_\epsilon(1_K,1_L)$ with disjoint intervals $K,L$, and these satisfy
\begin{equation*}
  \abs{B_\epsilon(1_K,1_L)}
  \leq\iint_{K\times L}\frac{1}{\abs{x-y}}\ud\sigma(y)\ud w(x)
  \lesssim ([\sigma,w]_{A_2}^*+[w,\sigma]_{A_2}^*)\sigma(K)^{1/2}w(L)^{1/2}
\end{equation*}
where the last step is a consequence of Lemmas~\ref{lem:Isub3JminusJ} and \ref{lem:Isub3Jc}.
\end{proof}

\begin{lemma}\label{lem:limitAtZero}
If $f,g\in\mathscr{F}$ are disjointly supported, then
\begin{equation}\label{eq:limitAtZero}
  \lim_{\epsilon\to 0}B_\epsilon(f,g)=\iint\frac{1}{x-y}f(y)\ud\sigma(y)g(x)\ud w(x).
\end{equation}
\end{lemma}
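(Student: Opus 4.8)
The plan is to reduce the claim to the behaviour of $B_\epsilon$ on pairs of disjoint \emph{intervals} and then to exploit the c\`adl\`ag property together with the uniform boundedness already established. By bilinearity it suffices to treat $f=1_I$, $g=1_J$ with $I,J\in\mathscr D^0$ disjoint. If $\dist(I,J)>0$, then for all sufficiently small $\epsilon$ the truncation does nothing, i.e.\ $B_\epsilon(1_I,1_J)=\iint (x-y)^{-1}1_I(y)\,\ud\sigma(y)1_J(x)\,\ud w(x)$, and \eqref{eq:limitAtZero} is trivial. The substantive case is when $I$ and $J$ are merely disjoint but adjacent, say $I=[a,b)$, $J=[b,c)$, sharing the endpoint $b$; here the truncation excises a nontrivial sliver near the common boundary for every $\epsilon>0$.

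First I would record that, since $1/(x-y)$ has constant sign on $J\times I$ (positive, as $x>y$ there), the integrand $(x-y)^{-1}1_I(y)1_J(x)$ is nonnegative, so $\epsilon\mapsto B_\epsilon(1_I,1_J)$ is monotone nondecreasing as $\epsilon\downarrow 0$ — the truncated region $\{|x-y|>\epsilon\}$ grows. Hence $\lim_{\epsilon\to 0}B_\epsilon(1_I,1_J)$ exists in $[0,\infty]$ and equals, by the monotone convergence theorem, exactly $\iint_{I\times J}(x-y)^{-1}\,\ud\sigma(y)\,\ud w(x)$, which is the right-hand side of \eqref{eq:limitAtZero}. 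It remains only to know this limit is finite, i.e.\ that the right-hand side is finite; but that is precisely the content of Lemma~\ref{lem:Isub3JminusJ} (applicable since $I\subset 3J\setminus J$ for adjacent equal-or-smaller $I$, and in general one splits $I$ into the parts inside $3J\setminus J$ and inside $(3J)^c$, the latter handled by Lemma~\ref{lem:Isub3Jc}), which bounds $\iint_{I\times J}|x-y|^{-1}\,\ud\sigma\,\ud w$ by a multiple of $([\sigma,w]_{A_2}^*+[w,\sigma]_{A_2}^*)\sigma(I)^{1/2}w(J)^{1/2}<\infty$.

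Actually the monotonicity observation makes the c\`adl\`ag lemma (Lemma~\ref{lem:BepsCadlag}) and the boundedness lemma (Lemma~\ref{lem:BepsBounded}) unnecessary for this particular statement; the monotone convergence theorem does everything once finiteness is in hand. I would nonetheless phrase the argument so that it reads cleanly: decompose $f=\sum_i c_i 1_{I_i}$, $g=\sum_j d_j 1_{J_j}$ with $\{I_i\}$, $\{J_j\}\subset\mathscr D^0$, note that disjointness of $\spt f$ and $\spt g$ forces each relevant pair $(I_i,J_j)$ to be disjoint, apply the pairwise statement, and sum finitely many limits.

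The only genuine subtlety — and the step I expect to require the most care — is the finiteness of $\iint_{I\times J}|x-y|^{-1}\,\ud\sigma(y)\,\ud w(x)$ for adjacent intervals sharing a common endpoint that may carry point masses of $\sigma$ and $w$ simultaneously. This is exactly where the new $A_2$ condition \eqref{eq:newA2} earns its keep: the telescoping/dyadic-shell argument of Lemma~\ref{lem:Isub3JminusJ}, which only uses the symmetric constant $[\sigma,w]_{A_2}$ controlled by $[\sigma,w]_{A_2}^*\wedge[w,\sigma]_{A_2}^*$, shows the double integral converges even when $\sigma\{b\}w\{b\}>0$, because the singularity $|x-y|^{-1}$ integrated against $\ud\sigma(y)\,\ud w(x)$ over a shrinking pair of adjacent intervals is tamed by the quadratic decay $\sigma(K_k)w(J_k)\lesssim |K_k|^2$. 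Everything else is bookkeeping.
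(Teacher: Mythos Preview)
Your proof is correct and follows essentially the same approach as the paper: reduce by bilinearity to indicators of disjoint intervals, invoke Lemmas~\ref{lem:Isub3JminusJ} and~\ref{lem:Isub3Jc} for finiteness of the right-hand side, and then use monotone (or dominated) convergence as the truncation region $\{|x-y|>\epsilon\}\cap(I\times J)$ increases to $I\times J$. The paper does this more tersely, without splitting into separated and adjacent cases, simply noting that $|x-y|>0$ on $I\times J$ so the domain converges; your discussion of point masses at the shared endpoint is also moot here, since the dyadic system $\mathscr{D}^0$ was chosen so that neither measure has atoms at dyadic endpoints.
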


\begin{proof}
By linearity, it suffices to consider $f=1_I$, $g=1_J$ with $I\cap J=\varnothing$. The finiteness of the right hand side follows from Lemmas~\ref{lem:Isub3JminusJ} and \ref{lem:Isub3Jc}. Now
\begin{equation*}
  B_\epsilon(1_I,1_J)=\iint_{\substack{(y,x)\in I\times J\\ \abs{x-y}>\epsilon}}\frac{1}{x-y}\ud\sigma(y)\ud\sigma(x).
\end{equation*}
Since $\abs{y-x}>0$ for every $(y,x)\in I\times J$, the integration domain converges to $I\times J$, and thus the asserted limit follows from dominated (or monotone) convergence.
\end{proof}

\begin{proposition}\label{prop:LSSUisSpecialCase}
Let $\sigma$ and $w$ be Radon measures that satisfy the $A_2$ conditions \eqref{eq:newA2} and the finiteness condition \eqref{eq:veryWBP}. Then there exists a bilinear form $B$ on $\mathscr{F}\times\mathscr{F}$ that satisfies \eqref{eq:Bdisjoint} for all disjointly supported $f,g\in\mathscr{F}$.

If, in addition, the measures satisfy the testing conditions \eqref{eq:LSSUtesting}, then the bilinear form $B$ satisfies the local testing conditions \eqref{eq:BtestLocal}.
\end{proposition}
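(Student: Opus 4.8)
The plan is to construct the bilinear form $B$ directly from the truncated forms $B_\epsilon$ by extracting a limit. By Lemma~\ref{lem:BepsBounded}, for each fixed pair $f,g\in\mathscr{F}$ the function $\epsilon\mapsto B_\epsilon(f,g)$ is bounded on $(0,\infty)$; by Lemma~\ref{lem:BepsCadlag} it is càdlàg. Since $\mathscr{F}$ is a countably-generated vector space (spanned by $\{1_I:I\in\mathscr{D}^0\}$), a diagonal argument over a sequence $\epsilon_n\to 0$ lets me pass to a subsequence along which $B_{\epsilon_n}(1_I,1_J)$ converges for every pair of generators $I,J\in\mathscr{D}^0$; by bilinearity this yields a limit for every $f,g\in\mathscr{F}$. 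I define $B(f,g):=\lim_{n\to\infty}B_{\epsilon_n}(f,g)$. Bilinearity of $B$ is inherited from the bilinearity of each $B_{\epsilon_n}$ and is preserved under pointwise limits on the finite-dimensional subspaces spanned by any finite collection of generators.

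Next I verify \eqref{eq:Bdisjoint} for disjointly supported $f,g\in\mathscr{F}$. By linearity it suffices to treat $f=1_I$, $g=1_J$ with $I\cap J=\varnothing$. Lemma~\ref{lem:limitAtZero} says precisely that $\lim_{\epsilon\to 0}B_\epsilon(1_I,1_J)$ exists and equals $\iint (x-y)^{-1}1_I(y)\,\ud\sigma(y)\,1_J(x)\,\ud w(x)$; in particular the limit along the subsequence $\epsilon_n$ agrees with this value, so $B(1_I,1_J)$ has the required kernel representation. (Here I use that the finiteness of the right-hand side is guaranteed by Lemmas~\ref{lem:Isub3JminusJ} and \ref{lem:Isub3Jc}, which only require the $A_2$ condition \eqref{eq:newA2}, already assumed.) This establishes the first assertion.

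For the second assertion, assume in addition the truncated testing conditions \eqref{eq:LSSUtesting}. I must pass these to the limit to obtain \eqref{eq:BtestLocal}. Fix an interval $I$ with dyadic end-points and $g\in\mathscr{F}$. The first line of \eqref{eq:LSSUtesting}, dualized, gives $|B_\epsilon(1_I,1_I g)|\le C\sigma(I)^{1/2}\Norm{1_I g}{L^2(w)}\le C\sigma(I)^{1/2}\Norm{g}{L^2(w)}$ uniformly in $\epsilon>0$; sending $\epsilon=\epsilon_n\to 0$ yields $|B(1_I,1_I g)|\le C\sigma(I)^{1/2}\Norm{g}{L^2(w)}$, i.e.\ the first half of \eqref{eq:BtestLocal}, with $\mathcal{H}\le C$. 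The second half follows symmetrically from the second line of \eqref{eq:LSSUtesting}. The only mild subtlety is that $1_I g$ need not lie in $\mathscr{F}$ if $g\in\mathscr{F}$ has pieces straddling an endpoint of $I$; but since $I$ has dyadic end-points and $g$ is a finite combination of indicators of intervals in $\mathscr{D}^0$, one may refine the generators of $g$ so that each lies wholly inside or wholly outside $I$, after which $1_I g\in\mathscr{F}$ and the preceding estimate applies verbatim.

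The main obstacle is the soft one of arranging a single subsequence $\epsilon_n$ that works simultaneously for all (countably many) pairs of generators while keeping bilinearity and the disjoint-support identity intact; once the diagonal extraction is set up, everything else is a routine passage to the limit in uniform-in-$\epsilon$ inequalities. One should note that the choice of subsequence is immaterial for the disjointly supported pairs (where the full limit exists by Lemma~\ref{lem:limitAtZero}), so the constructed $B$ is canonical on exactly the pairs where \eqref{eq:Bdisjoint} is postulated, which is all that Proposition~\ref{prop:LSSUisSpecialCase} claims.
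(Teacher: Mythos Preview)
Your argument is correct but proceeds differently from the paper. The paper observes that $\epsilon\mapsto B_\epsilon(f,g)$ lies in the Banach space $D_b(0,\infty)$ of bounded c\`adl\`ag functions, and that the functional $\phi\mapsto\lim_{t\to 0}\phi(t)$ is well-defined and bounded on the closed subspace $D_\ell(0,\infty)$ of those $\phi$ for which this limit exists. A Hahn--Banach extension $\Lambda$ to all of $D_b(0,\infty)$ then yields $B(f,g):=\Lambda(\epsilon\mapsto B_\epsilon(f,g))$, with the disjoint-support identity following because on $D_\ell(0,\infty)$ the extension agrees with the honest limit.

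Your diagonal-subsequence extraction does the same job more concretely, exploiting the countability of $\mathscr{D}^0$ (hence of the generating pairs $(1_I,1_J)$) in place of the abstract extension. This is more elementary and avoids the Axiom of Choice in its full strength; the price is that you must explicitly check that bilinearity survives the limit (which you do, via finite linear combinations of generators). The Hahn--Banach route is tidier in that bilinearity of $B$ is automatic from the linearity of $\Lambda$, and it would work even without a countable generating set. For the testing conditions, both proofs pass the uniform-in-$\epsilon$ bound through the limiting procedure in the same way; your aside about $1_I g\in\mathscr{F}$ is a valid point the paper leaves implicit (it holds since $\mathscr{F}$ is closed under products of indicators with dyadic endpoints).
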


\begin{proof}
By Lemmas~\ref{lem:BepsCadlag} and \ref{lem:BepsBounded}, we have
\begin{equation*}
  [\epsilon\mapsto B_\epsilon(f,g)]\in D_b(0,\infty):=\{\phi:(0,\infty)\to\R\text{ is c\`adl\`ag and bounded}\}
\end{equation*}
for all $f,g\in\mathscr{F}$, and by Lemma~\ref{lem:limitAtZero},
\begin{equation*}
  [\epsilon\mapsto B_\epsilon(f,g)]\in D_{\ell}(0,\infty):=\{\phi\in D_b(0,\infty),\lim_{t\to 0}\phi(t)\text{ exists}\}
\end{equation*}
for disjointly supported $f,g\in\mathscr{F}$.

Both equipped with the supremum norm, $D_\ell(0,\infty)$ is a closed subspace of the Banach space $D_b(0,\infty)$, and $\phi\mapsto\lim_{t\to 0}\phi(t)$ is a continuous linear functional on $D_\ell(0,\infty)$, of norm $1$. By Hahn--Banach, it has an extension $\Lambda$ to a continuous linear functional defined on all of $D_b(0,\infty)$ and of the same norm. So we may now abstractly define
\begin{equation*}
   B(f,g):=\Lambda(\epsilon\mapsto B_{\epsilon}(f,g)).
\end{equation*}
This is a bilinear form by the bilinearity of $B_\epsilon$ and the linearity of $\Lambda$, and it has the integral representation \eqref{eq:Bdisjoint} for disjointly supported $f,g\in\mathscr{F}$ by Lemma~\ref{lem:limitAtZero}.

To verify the testing conditions under the hypothesis \eqref{eq:LSSUtesting}, we compute
\begin{equation*}
\begin{split}
  \abs{B(1_I,1_I g)}
  &=\abs{\Lambda(\epsilon\mapsto B_\epsilon(1_I,1_Ig))}
  \leq\sup_{\epsilon>0}\abs{B_\epsilon(1_I,1_Ig)} \\
  &\leq\sup_{\epsilon>0}\Norm{1_I H_\epsilon(1_I\ud\sigma)}{L^2(w)}\Norm{g}{L^2(w)}
  \leq C\sigma(I)^{1/2}\Norm{g}{L^2(w)},
\end{split}
\end{equation*}
and the other condition in \eqref{eq:BtestLocal} is shown in the same way.
\end{proof}

Proposition~\ref{prop:LSSUisSpecialCase} shows in particular that the assumptions of Lacey et al.~\cite{LSSU:2wHilbert} imply the local assumptions \eqref{eq:HtestLocal} and \eqref{eq:newA2} of our Theorem~\ref{thm:main} which, once proven, implies the boundedness of the two-weight Hilbert transform in the sense described above. On the other hand, Lacey et al.~\cite{LSSU:2wHilbert} obtain the apparently stronger conclusion (in the special case of no common point masses) of the uniform boundedness of the truncated Hilbert transforms. In the following Proposition, however, we show that this boundedness of the truncated operators is a relatively simple consequence of the boundedness of the un-truncated Hilbert transform.

\begin{proposition}
Let $\sigma$ and $w$ be two Radon measures that satisfy the $A_2$ condition \eqref{eq:simpleA2}.
Suppose that there exists a bounded bilinear form $B:L^2(\sigma)\times L^2(w)\to\R$ such that
\begin{equation*}
  B(f,g)=\iint\frac{1}{x-y}f(y)g(x)\ud\sigma(y)\ud w(x),
\end{equation*}
whenever $f,g\in\mathscr{F}$ satisfy $\dist(\spt f,\spt g)>0$.
Then the truncated Hilbert transforms satisfy the uniform estimate
\begin{equation*}
  \sup_{\epsilon>0}\Norm{H_{\epsilon}(f\ud\sigma)}{L^2(w)}\leq C\Norm{f}{L^2(\sigma)}.
\end{equation*}
\end{proposition}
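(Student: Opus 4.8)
The plan is to prove the equivalent statement
$$
  \abs{B_\epsilon(f,g)}\le C\Norm{f}{L^2(\sigma)}\Norm{g}{L^2(w)}\qquad (f,g\in\mathscr{F})
$$
with a constant $C\lesssim\|B\|+[\sigma,w]_{A_2}$, uniformly in $\epsilon>0$; the passage from this bilinear bound to the asserted operator estimate (with the same constant) is routine, using density of $\mathscr{F}$ in $L^2(\sigma)$ and $L^2(w)$ and the fact that $H_\epsilon(f\ud\sigma)$ is a bounded function for $f\in\mathscr{F}$. So fix $\epsilon>0$. First I would pick $n\in\Z$ with $\epsilon/5<2^{-n}<\epsilon/2$ (possible since the ratio $5/2$ exceeds $2$), put $\ell:=2^{-n}$, and let $\{J_m\}_{m\in\Z}$ be the partition of $\R$ into generation-$n$ dyadic intervals. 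Writing $f_m:=f1_{J_m}$ and $g_p:=g1_{J_p}$ — which lie in $\mathscr{F}$ by nestedness of dyadic intervals — and expanding the (finite) sum, $B_\epsilon(f,g)=\sum_{m,p}B_\epsilon(f_m,g_p)$. Call $(m,p)$ \emph{far} if $\abs{m-p}\ge6$ and \emph{near} otherwise. On a far rectangle $J_m\times J_p$ we have $\abs{x-y}>\epsilon$ throughout (since $\dist(J_m,J_p)=(\abs{m-p}-1)\ell\ge5\ell>\epsilon$), so the truncation is inactive there; since in addition $\dist(\spt f_m,\spt g_p)>0$, the standing hypothesis on $B$ gives $B_\epsilon(f_m,g_p)=\iint_{J_m\times J_p}(x-y)^{-1}f_mg_p\,\ud\sigma\,\ud w=B(f_m,g_p)$. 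Summing over far pairs and using bilinearity of $B$ on the full finite sum,
\begin{equation*}
  B_\epsilon(f,g)=B(f,g)-\sum_{(m,p)\text{ near}}B(f_m,g_p)+\sum_{(m,p)\text{ near}}B_\epsilon(f_m,g_p).
\end{equation*}

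I would then estimate the three terms separately. The first obeys $\abs{B(f,g)}\le\|B\|\Norm{f}{L^2(\sigma)}\Norm{g}{L^2(w)}$ by hypothesis. For the second, $\abs{B(f_m,g_p)}\le\|B\|\Norm{f_m}{L^2(\sigma)}\Norm{g_p}{L^2(w)}$, and since each index lies in at most $11$ near pairs, Cauchy--Schwarz together with $\sum_m\Norm{f_m}{L^2(\sigma)}^2=\Norm{f}{L^2(\sigma)}^2$ gives $\sum_{\text{near}}\abs{B(f_m,g_p)}\le11\|B\|\Norm{f}{L^2(\sigma)}\Norm{g}{L^2(w)}$. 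For the third, note first that if $\abs{m-p}\le1$ then $\abs{x-y}<2\ell<\epsilon$ on all of $J_m\times J_p$, so $B_\epsilon(f_m,g_p)=0$; only the pairs with $2\le\abs{m-p}\le5$ survive. For such a pair $J_m$ and $J_p$ are disjoint and non-adjacent, and on $\{\abs{x-y}>\epsilon\}$ the crude bound $\abs{B_\epsilon(f_m,g_p)}\le\epsilon^{-1}\int_{J_m}\abs{f}\,\ud\sigma\int_{J_p}\abs{g}\,\ud w\le\epsilon^{-1}\sigma(J_m)^{1/2}w(J_p)^{1/2}\Norm{f_m}{L^2(\sigma)}\Norm{g_p}{L^2(w)}$ is efficient: enlarging $J_m$ and $J_p$ to two adjacent intervals of common length $\abs{m-p}\ell$ gives $\sigma(J_m)^{1/2}w(J_p)^{1/2}\le\abs{m-p}\ell\,[\sigma,w]_{A_2}\le\tfrac52\epsilon[\sigma,w]_{A_2}$, whence $\epsilon^{-1}\sigma(J_m)^{1/2}w(J_p)^{1/2}\le\tfrac52[\sigma,w]_{A_2}$. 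Since each index lies in at most $8$ such pairs, Cauchy--Schwarz again yields $\sum_{\text{near}}\abs{B_\epsilon(f_m,g_p)}\le20[\sigma,w]_{A_2}\Norm{f}{L^2(\sigma)}\Norm{g}{L^2(w)}$. Adding the three bounds completes the proof.

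The only genuine obstacle — and it is a matter of bookkeeping rather than depth — is the choice of the grid scale $\ell$. On the one hand $\sigma(J)^{1/2}w(J)^{1/2}$ is \emph{not} controlled by $[\sigma,w]_{A_2}\abs{J}$ when $\sigma$ and $w$ cluster near $J$ (for instance a common point mass), so near-diagonal rectangles carry no usable $A_2$ bound; this forces $\ell<\epsilon/2$, so that the truncation simply annihilates them. On the other hand, to recover the far rectangles through the separated-support kernel representation of $B$ one needs $\ell$ not too small, so that only boundedly many near pairs (hence bounded overlap) remain; here $\ell>\epsilon/5$ suffices. Any dyadic scale in the window $(\epsilon/5,\epsilon/2)$ reconciles the two, and the rest is the elementary splitting above. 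It is perhaps worth stressing that this argument uses nothing beyond the boundedness of the un-truncated form and the mild symmetric $A_2$ condition \eqref{eq:simpleA2}; in particular it is independent of the testing conditions and of the Muckenhoupt--Poisson condition \eqref{eq:newA2}.
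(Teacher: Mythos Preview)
Your proof is correct and rests on the same core idea as the paper's: localize at a dyadic scale comparable to $\epsilon$, recover the long-range part from the untruncated form $B$, and control the remaining near-diagonal part with the $A_2$ condition \eqref{eq:simpleA2}. The paper organizes this in two steps, first introducing an auxiliary \emph{dyadic} truncation $H_k^d(f\ud\sigma):=\sum_{I\in\mathscr{D}_k}1_I H(1_{(2I)^c}f\ud\sigma)$, bounding it by writing $H_k^d=H-\sum_I 1_I H(1_{2I}\,\cdot\,)$ and using bounded overlap, and then comparing $H_\epsilon$ with $H_k^d$ pointwise (this is where $[\sigma,w]_{A_2}$ enters). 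Your one-step bilinear block decomposition at scale $\ell\in(\epsilon/5,\epsilon/2)$ is more direct and avoids the intermediate operator; the key observation that the choice $\ell<\epsilon/2$ makes the truncation annihilate the diagonal and adjacent blocks (where the $A_2$ control would fail) is exactly what the paper achieves implicitly through its pointwise comparison step.
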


\begin{proof}
We proceed in two steps, and first consider a dyadic version of the truncation,
\begin{equation*}
  H^d_{k}(f\ud\sigma):=\sum_{I\in\mathscr{D}_k}1_I H(1_{(2I)^c}f\ud\sigma)
  =H(f\ud\sigma)-\sum_{I\in\mathscr{D}_k}1_I H(1_{2I} f\ud\sigma),
\end{equation*}
where $\mathscr{D}_k:=\{I\in\mathscr{D}:\abs{I}=2^{-k}\}$ and $\mathscr{D}$ is a fixed dyadic system.

The first term is bounded by the assumed boundedness of $H$. For the second part, we have
\begin{equation*}
\begin{split}
  \BNorm{\sum_{I\in\mathscr{D}_k}1_I H(1_{2I} f\ud\sigma)}{L^2(w)}
  &\leq\Big(\sum_{I\in\mathscr{D}_k}\Norm{H(1_{2I} f\ud\sigma)}{L^2(w)}^2\Big)^{1/2} \\
  &\leq N\Big(\sum_{I\in\mathscr{D}_k}\Norm{1_{2I} f}{L^2(\sigma)}^2\Big)^{1/2}
  \leq \sqrt{2}N\Norm{f}{L^2(\sigma)},
\end{split}
\end{equation*}
by the bounded overlap of the intervals $2I$ for $I\in\mathscr{D}_k$. Thus $H^d_k$ is bounded by $(1+\sqrt{2})N$, where $N$ is the norm bound for $H$.

In the second step, we estimate the difference of $H_{\epsilon}$ and $H^d_k$. Note that
\begin{equation*}
  H^d_k(f\ud\sigma)(x)=\int_{J^c}\frac{1}{x-y}f(y)\ud\sigma
\end{equation*}
for some interval $J=[x-a,x+b)$, where $\frac12 2^{-k}\leq a,b\leq \frac32 2^{-k}$. Let us choose $k$ so that $\frac14 2^{-k}\leq \epsilon<\frac12 2^{-k}$, so that necessarily $[x-\epsilon,x+\epsilon]\subset J$, and
\begin{equation*}
\begin{split}
  \abs{H_\epsilon(f\ud\sigma)(x)-H^d_k(f\ud\sigma)(x)}
  &=\Babs{\int_{J\setminus[x-\epsilon,x+\epsilon]}\frac{1}{x-y}f(y)\ud\sigma(y)} \\
  &\leq\frac{1}{\epsilon}\int_{\epsilon<\abs{x-y}\leq 6\epsilon}\abs{f(y)}\ud\sigma(y) \\
  &\lesssim\sum_{0<\abs{m}\leq 12} \frac{1}{\abs{I}}\int_{I\dot+m}\abs{f}\ud\sigma,\qquad
  I\dot+m:=I+\ell(I)m,
\end{split}
\end{equation*}
where $I$ is the unique dyadic interval of length $2^{-k-3}\in(\frac12\epsilon,\epsilon]$ that contains $x$.

Thus, we have
\begin{equation*}
  \Norm{H_{\epsilon}(f\ud\sigma)-H^d_k(f\ud\sigma)}{L^2(w)}
  \lesssim\BNorm{\sum_{I\in\mathscr{D}_{k+3}}\sum_{0<\abs{m}\leq 12} \frac{1_I}{\abs{I}}\int_{I\dot+m}\abs{f}\ud\sigma }{L^2(w)}.
\end{equation*}
We dualize with a non-negative $g\in L^2(w)$ and observe that
\begin{equation*} 
\begin{split}
  \sum_{I\in\mathscr{D}_{k+3}} &\frac{1}{\abs{I}}\int_{I\dot+m}\abs{f}\ud\sigma\int_I g\ud w
  \leq\sum_{I\in\mathscr{D}_{k+3}}\frac{\sqrt{\sigma(I\dot+m)w(I)}}{\abs{I}}\Norm{1_{I\dot+m}f}{L^2(\sigma)}\Norm{1_I g}{L^2(w)} \\
  &\lesssim [\sigma,w]_{A_2}\sum_{I\in\mathscr{D}_{k+3}}\Norm{1_{I\dot+m}f}{L^2(\sigma)}\Norm{1_I g}{L^2(w)}
  \leq[\sigma,w]_{A_2}\Norm{f}{L^2(\sigma)}\Norm{g}{L^2(w)},
\end{split}
\end{equation*}
where the last step follows from Cauchy--Schwarz. It remains to sum over the boundedly many values of $m$ to complete the estimate.
\end{proof}

\begin{remark}[(Another construction of the Hilbert transform)]\label{rem:anotherConstr}
Due to the complex-analytic connections of the Hilbert transform, it it is also natural to consider
\begin{equation*}
\begin{split}
  H^t(f\ud\sigma)(x) &:=\int_{\R}\frac{1}{x+it-y}f(y)\ud\sigma(y),\\
  B^t(f,g) &:=\pair{H^t(f\ud\sigma)}{g\ud w}=\iint_{\R\times \R}\frac{1}{x+it-y}f(y)\ud\sigma(y)g(x)\ud w(x),
\end{split}
\end{equation*}
which, like $H_\epsilon$ and $B_\epsilon$, are well defined for all $f,g\in\mathscr F$, when $t>0$. It is immediate that $t\in(0,\infty)\mapsto B^t(f,g)$ is continuous for $f,g\in\mathscr F$ and, as in Lemma \ref{lem:limitAtZero}, one checks that $B^t(f,g)\to \iint\frac{1}{x-y}f(y)\ud\sigma(y)g(x)\ud w(x)$ as $t\to 0$ if $f,g\in\mathscr F$ are disjointly supported. Assuming the analogue of \eqref{eq:veryWBP}, namely that for all intervals $I$ with dyadic end-points we have
\begin{equation*}
  \sup_{t>0}\abs{B^t(1_I,1_I)}<\infty,
\end{equation*}
we deduce the analogue of Lemma \ref{lem:BepsBounded}, the boundedness of $t\in(0,\infty)\mapsto B^t(f,g)$ for all $f,g\in\mathscr F$. Denoting by $C_b(0,\infty)$ the Banach space of all continuous, bounded functions on $(0,\infty)$ with the supremum norm, and by $C_\ell(0,\infty)$ its subspace of functions with limit as $t\to 0$, we obtain, similarly to Proposition \ref{prop:LSSUisSpecialCase},  an alternative construction of a bilinear form of the Hilbert transform as
\begin{equation*}
  \tilde B(f,g):=\tilde\Lambda(t\mapsto B^t(f,g)),
\end{equation*}
where $\tilde\Lambda\in(C_b(0,\infty))^*$ is any Hahn--Banach extension of the continuous linear functional $\phi\mapsto\lim_{t\to 0}\phi(t)$ on $C_\ell(0,\infty)$. Note that, while the comparison of the complex-analytic $B^\epsilon(f,g)$ and the truncated $B_\epsilon(f,g)$ is completely routine in the classical (say, unweighted) theory, it does not seem as immediate in the generality of the two-weight inequalities that we consider.
\end{remark}

%
%

\subsection{The probabilistic reduction}

We recall the martingale difference expansions and the probabilistic good/bad decompositions, the established framework of non-homogeneous analysis that essentially goes back to \cite{NTV:Tb}. We do this in detail in order to make it clear that we do not impose any \emph{a priori} boundedness assumptions beyond the testing conditions \eqref{eq:BtestLocal}.

Every $f\in\mathscr{F}$ is piecewise constant on some sufficiently fine dyadic partition of $\R$, say on the dyadic intervals of length $2^{-m}$. So it suffices to bound $B(f,g)$ for all $f,g\in\mathscr{F}$ of this type, where $m$ is arbitrary but fixed, as long as the bound is independent of $m$. Thus we consider only functions of this form from now on, and only intervals with dyadic end-points on the scale $2^{-m}$. We refer to the dyadic intervals of length $2^{-m}$ as the \emph{minimal intervals}.

Besides the original dyadic system $\mathscr{D}^0$, we will be making use of shifted systems $\mathscr{D}$, obtained by translating the intervals of $\mathscr{D}^0$ by a multiple of the minimal length scale $2^{-m}$. Thus the minimal intervals of the different dyadic systems always coincide.

Consider a function $f\in\mathscr{F}$, supported by some $I_0\in\mathscr{D}$, where $\mathscr{D}$ is a shifted system as described. Then we have the familiar martingale difference expansion
\begin{equation}\label{eq:mdsExpansion}
  f=E_{I_0}^\sigma f+\sum_{\substack{I\in\mathscr{D} \\ I\subseteq I_0}}\Delta_I^\sigma f, 
\end{equation}
where, writing
\begin{equation*}
  \ave{f}_{I}^\sigma :=\fint_I f\ud\sigma:=\frac{1}{\sigma(I)}\int_I f\ud\sigma, 
\end{equation*}
and, denoting by $I_{\operatorname{left}}$ and $I_{\operatorname{right}}$ the left and right halves (the dyadic children) of $I$, we define
\begin{equation*}
  E_I^\sigma f:= \ave{f}_{I}^\sigma 1_{I},\qquad
    \Delta_I^\sigma f:=\sum_{u\in\{\operatorname{left},\operatorname{right}\}}E_{I_u}^\sigma f-E_I^\sigma f.
\end{equation*}
Only the intervals $I$ strictly larger than the minimal intervals need to be taken into account in the sum in \eqref{eq:mdsExpansion}, since $\Delta_I^\sigma f=0$ if $I$ is a subset of a minimal interval. The martingale differences $\Delta_I^w$ are of course defined analogously.

We record the following useful bounds:

\begin{lemma}\label{lem:basicBound}
For all $f,g\in\mathscr{F}$, we have
\begin{equation*}
\begin{split}
  \abs{B(\Delta_I^\sigma f, \Delta_J^w g)} &\leq 2\mathcal{W}\Norm{\Delta_I^\sigma f}{L^2(\sigma)}\Norm{\Delta_J^ w g}{L^2(w)}, \\
  \abs{B(E_I^\sigma f, \Delta_J^w g)} &\leq \sqrt{2}\mathcal{W}\Norm{E_I^\sigma f}{L^2(\sigma)}\Norm{\Delta_J^ w g}{L^2(w)}, \\
  \abs{B(\Delta_I^\sigma f, E_J^w g)} &\leq \sqrt{2}\mathcal{W}\Norm{\Delta_I^\sigma f}{L^2(\sigma)}\Norm{E_J^ w g}{L^2(w)}, \\
  \abs{B(E_I^\sigma f, E_J^w g)} &\leq \mathcal{W}\Norm{E_I^\sigma f}{L^2(\sigma)}\Norm{E_J^ w g}{L^2(w)}.
\end{split}
\end{equation*}
\end{lemma}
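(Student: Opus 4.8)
The four inequalities in Lemma~\ref{lem:basicBound} are all instances of the weak boundedness property \eqref{eq:WBP}, combined with the observation that a martingale difference $\Delta_I^\sigma f$ (and likewise an averaging operator $E_I^\sigma f$) is, up to normalization, a very simple explicit function that can be written as a bounded linear combination of indicators of intervals with controlled $L^2$-norms. So the plan is: (i) record that $E_I^\sigma f=\ave{f}_I^\sigma 1_I$ is a multiple of $1_I$, with $\Norm{E_I^\sigma f}{L^2(\sigma)}=|\ave{f}_I^\sigma|\sigma(I)^{1/2}$; and (ii) observe that $\Delta_I^\sigma f = a_{I_{\ell}} 1_{I_{\ell}} + a_{I_r} 1_{I_r}$ for the two children $I_\ell,I_r$ of $I$, where the constants are $a_{I_u}=\ave{f}_{I_u}^\sigma - \ave{f}_I^\sigma$, and that the two summands are orthogonal in $L^2(\sigma)$, so that $|a_{I_u}|\sigma(I_u)^{1/2}\leq\Norm{\Delta_I^\sigma f}{L^2(\sigma)}$ for each child $u$. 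The analogous statements hold for $w$ and for $g$.

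From here each bound follows by bilinear expansion and a direct application of \eqref{eq:WBP} to each resulting term $B(1_K,1_L)$ with $K\in\{I_\ell,I_r\}$ (or $K=I$) and $L\in\{J_\ell,J_r\}$ (or $L=J$). For the first inequality, $B(\Delta_I^\sigma f,\Delta_J^w g)=\sum_{u,v} a_{I_u}b_{J_v}B(1_{I_u},1_{J_v})$, and applying \eqref{eq:WBP} to each of the four terms bounds the sum by $\mathcal{W}\sum_{u,v}|a_{I_u}|\sigma(I_u)^{1/2}|b_{J_v}|w(J_v)^{1/2}\leq\mathcal{W}\big(\sum_u|a_{I_u}|\sigma(I_u)^{1/2}\big)\big(\sum_v|b_{J_v}|w(J_v)^{1/2}\big)$; since each factor is a sum of two terms each dominated by the respective $L^2$-norm, and $\sqrt{p}+\sqrt{q}\le\sqrt{2}\sqrt{p+q}$ gives the constant $2$ after multiplying the two factors of $\sqrt 2$. (Alternatively, each factor $\sum_u|a_{I_u}|\sigma(I_u)^{1/2}\le\sqrt2\,\Norm{\Delta_I^\sigma f}{L^2(\sigma)}$ by Cauchy--Schwarz on two terms using orthogonality, which is the cleaner route.) The second and third inequalities have one averaging factor and one difference factor, contributing a single $\sqrt2$; the fourth has two averaging factors, each contributing no extra constant, hence the bare $\mathcal{W}$.

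There is genuinely no obstacle here: the lemma is purely bookkeeping on top of \eqref{eq:WBP}. The only point requiring a word of care is that \eqref{eq:WBP}, as stated, is an inequality for $|B(1_K,1_L)|$ for \emph{arbitrary} intervals $K,L$ (not necessarily disjoint or nested), which is exactly what the preceding Proposition establishes from \eqref{eq:HtestLocal} and \eqref{eq:newA2}; thus the children $I_\ell,I_r$ of $I$ and the children $J_\ell,J_r$ of $J$, in whatever relative position they occur, are all covered. One should also note that if $\sigma(I_u)=0$ for some child then the corresponding coefficient $\ave{f}_{I_u}^\sigma$ is either undefined or that piece of $\Delta_I^\sigma f$ is absent; in that degenerate case the term simply does not appear and the estimate is vacuous for it, so the bound holds a fortiori. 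Writing this out gives the four displayed inequalities with the stated constants $2$, $\sqrt2$, $\sqrt2$, $1$.
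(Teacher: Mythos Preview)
Your proof is correct and follows essentially the same approach as the paper: expand $\Delta_I^\sigma f$ and $\Delta_J^w g$ over the children, apply the weak boundedness property \eqref{eq:WBP} to each $B(1_{I_u},1_{J_v})$, and use Cauchy--Schwarz on the resulting two- or four-term sum to extract the constants $1,\sqrt{2},2$. The only cosmetic difference is that the paper applies Cauchy--Schwarz once to the full four-term sum (yielding $\sqrt{4}=2$), whereas you factor first and apply it to each two-term factor (yielding $\sqrt{2}\cdot\sqrt{2}=2$); these are the same computation.
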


\begin{proof}
We have
\begin{equation*}
\begin{split}
  \abs{B(\Delta_I^\sigma f,\Delta_J^w g)}
  &\leq\sum_{u,v\in\{\operatorname{left},\operatorname{right}\}}\abs{\ave{\Delta_I^\sigma f}_{I_u}^\sigma}\cdot
      \abs{\ave{\Delta_I^w g}_{J_v}^\sigma}\cdot\abs{ B(1_{I_u},1_{J_v})} \\
  &\leq\sum_{u,v\in\{\operatorname{left},\operatorname{right}\}}
      \abs{\ave{\Delta_I^\sigma f}_{I_u}^\sigma}\cdot\abs{\ave{\Delta_J^w g}_{J_v}^\sigma}\cdot
    \mathcal{W} \sigma(I_u)^{1/2}w(J_v)^{1/2} \\
   &\leq\mathcal{W}\cdot
   \sqrt{4}\Big(\sum_{u,v\in\{\operatorname{left},\operatorname{right}\}}\abs{\ave{\Delta_I^\sigma f}_{I_u}^\sigma}^2\sigma(I_u)
       \abs{\ave{\Delta_J^w g}_{J_v}^\sigma}^2 w(J_v)\Big)^{1/2} \\
    &=2\mathcal{W}\Norm{\Delta_I^\sigma f}{L^2(\sigma)}\Norm{\Delta_J^w g}{L^2(w)}.
\end{split}
\end{equation*}
The proofs of the other estimates are similar.
\end{proof}

Let $\mathcal{K}_n$ be the supremum of $\abs{B(f,g)}$ taken over all $f,g\in\mathscr{F}$ with $\Norm{f}{L^2(\sigma)}\leq 1$ and $\Norm{g}{L^2(w)}\leq 1$, and for which there exist at most $2^n$ consecutive minimal intervals supporting both $f$ and $g$. To prove the boundedness of the Hilbert transform, it suffices to bound the numbers $\mathscr{K}_n$ uniformly in $n$. The following qualitative \emph{a priori} bound is useful:

\begin{lemma}\label{lem:aPrioriBound}
Under the weak boundedness property~\eqref{eq:WBP}, we have
\begin{equation*}
  \mathcal{K}_n<\infty\qquad\forall\ n\in\N.
\end{equation*}
\end{lemma}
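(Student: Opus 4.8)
The plan is to bound $B(f,g)$ for $f,g\in\mathscr{F}$ supported on at most $2^n$ consecutive minimal intervals by decomposing both functions into their martingale difference expansions and their projections onto the coarsest relevant level, and then applying Lemma~\ref{lem:basicBound} termwise. First I would fix a shifted dyadic system $\mathscr{D}$ and a dyadic interval $I_0$ of length comparable to $2^n\cdot 2^{-m}$ that contains the supports of both $f$ and $g$ (such an $I_0$ exists up to enlarging the support by a bounded factor, which only changes constants). Then $f=E_{I_0}^\sigma f+\sum_{I\subseteq I_0}\Delta_I^\sigma f$ and $g=E_{I_0}^w g+\sum_{J\subseteq I_0}\Delta_J^w g$, where in each sum only the finitely many intervals strictly larger than the minimal scale contribute; since there are at most finitely many dyadic intervals between scale $2^{-m}$ and scale $\abs{I_0}$, both sums are \emph{finite}.

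Next I would expand $B(f,g)$ bilinearly into the finitely many terms $B(E_{I_0}^\sigma f,E_{I_0}^w g)$, $B(E_{I_0}^\sigma f,\Delta_J^w g)$, $B(\Delta_I^\sigma f,E_{I_0}^w g)$, and $B(\Delta_I^\sigma f,\Delta_J^w g)$, and bound each using the corresponding line of Lemma~\ref{lem:basicBound}, which is available because the weak boundedness property~\eqref{eq:WBP} holds by hypothesis. This gives
\begin{equation*}
  \abs{B(f,g)}\leq 2\mathcal{W}\Big(\Norm{E_{I_0}^\sigma f}{L^2(\sigma)}+\sum_{I\subseteq I_0}\Norm{\Delta_I^\sigma f}{L^2(\sigma)}\Big)\Big(\Norm{E_{I_0}^w g}{L^2(w)}+\sum_{J\subseteq I_0}\Norm{\Delta_J^w g}{L^2(w)}\Big).
\end{equation*}
By orthogonality of the martingale differences in $L^2(\sigma)$ (resp.\ $L^2(w)$) together with Cauchy--Schwarz over the at most $N(n,m)$ scales present, each factor is bounded by $\sqrt{N(n,m)+1}\,\Norm{f}{L^2(\sigma)}$ (resp.\ the analogous bound for $g$). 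Since $\Norm{f}{L^2(\sigma)}\leq 1$ and $\Norm{g}{L^2(w)}\leq 1$, we conclude $\mathcal{K}_n\leq 2\mathcal{W}\,(N(n,m)+1)<\infty$.

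The only subtlety—and the point worth stating carefully rather than the main obstacle—is that the bound obtained here depends on $m$ (the minimal scale) and on $n$, so it is purely \emph{qualitative}: it serves only to guarantee that $\mathcal{K}_n$ is a finite number that can later be manipulated (subtracted, absorbed) in the quantitative estimates, not to give the uniform bound we are ultimately after. One should also note that replacing the support of $f$ or $g$ by the smallest containing dyadic interval $I_0$ may enlarge the number of minimal intervals by at most a fixed factor, which is harmless since we only claim finiteness. No genuine difficulty arises because everything in sight is a finite sum once $m$ is fixed.
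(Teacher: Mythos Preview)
Your proposal is correct and follows essentially the same approach as the paper: martingale-difference expansion of $f$ and $g$ under a suitably shifted dyadic system, termwise application of Lemma~\ref{lem:basicBound}, then Cauchy--Schwarz combined with orthogonality over the finitely many nonzero pieces. One small remark: if you choose the shift so that the union $I_0$ of the $2^n$ minimal intervals is itself dyadic (as the paper does), the count of nonzero terms is exactly $2^n$ and the resulting bound $\mathcal{K}_n\le 2^{n+1}\mathcal{W}$ is in fact independent of $m$; your $N(n,m)$ is really just $N(n)$.
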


\begin{proof}
Let $I_0$ be the union of $2^n$ consecutive minimal intervals supporting both $f$ and $g$, and $\mathscr{D}$ be a shifted dyadic system with $I_0\in\mathscr{D}$. We make use of the martingale difference expansion \eqref{eq:mdsExpansion} and Lemma~\ref{lem:basicBound} to estimate
\begin{equation*}
\begin{split}
  \abs{B(f,g)}
  &\leq\abs{B(E_{I_0}^\sigma f,E_{I_0}^w g)}
     +\sum_{I:I\subseteq I_0}\abs{B(\Delta_{I}^\sigma f,E_{I_0}^w g)} \\
  &\qquad   +\sum_{J:J\subseteq I_0}\abs{B(E_{I_0}^\sigma f,\Delta_J^w g)}
     +\sum_{I,J:I,J\subseteq I_0}\abs{B(\Delta_{I}^\sigma f,\Delta_J^w g)} \\
   &\leq 2\mathcal{W}\Big(\Norm{E_{I_0}^\sigma f}{L^2(\sigma)}\Norm{E_{I_0}^w g}{L^2(w)}
     +\sum_{I:I\subseteq I_0}\Norm{\Delta_{I}^\sigma f}{L^2(\sigma)}\Norm{E_{I_0}^w g}{L^2(w)} \\
  &\qquad   +\sum_{J:J\subseteq I_0}\Norm{E_{I_0}^\sigma f}{L^2(\sigma)}\Norm{\Delta_{J}^w g}{L^2(w)}
     +\sum_{I,J:I,J\subseteq I_0}\Norm{\Delta_{I}^\sigma f}{L^2(\sigma)}\Norm{\Delta_{J}^w g}{L^2(w)}\Big)  \\
   &=2\mathcal{W}\Big(\Norm{E_{I_0}^\sigma f}{L^2(\sigma)}+\sum_{I:I\subseteq I_0}\Norm{\Delta_{I}^\sigma f}{L^2(\sigma)}\Big) \\
   &\qquad\times   \Big(\Norm{E_{I_0}^w g}{L^2(w)}+\sum_{I:I\subseteq I_0}\Norm{\Delta_{I}^w g}{L^2(w)}\Big).
\end{split}
\end{equation*}
Since $\Delta_I^\sigma f=0$ for $\abs{I}\leq 2^{-m}=2^{-n}\abs{I_0}$, there are at most $1+\sum_{k=0}^{n-1} 2^k=2^n$ nonzero summands within the first parentheses on the right, and Cauchy--Schwarz and the orthogonality of martingale differences show that
\begin{equation*}
\begin{split}
  \Norm{E_{I_0}^\sigma f}{L^2(\sigma)} & +\sum_{I:I\subseteq I_0}\Norm{\Delta_{I}^\sigma f}{L^2(\sigma)}\\
  & \leq 2^{n/2}\Big(\Norm{E_{I_0}^\sigma f}{L^2(\sigma)}^2+\sum_{I:I\subseteq I_0}\Norm{\Delta_{I}^\sigma f}{L^2(\sigma)}^2\Big)^{1/2}
  = 2^{n/2}\Norm{f}{L^2(\sigma)}.
\end{split}
\end{equation*}
Combining this with a similar estimate for $g$ and $w$ gives us
\begin{equation*}
  \abs{B(f,g)}\leq 2\mathcal{W}\cdot 2^{n/2}\Norm{f}{L^2(\sigma)} \cdot 2^{n/2}\Norm{g}{L^2(w)}
  = 2^{n+1}\mathcal{W}\Norm{f}{L^2(\sigma)} \Norm{g}{L^2(w)},
\end{equation*}
which shows that $\mathcal{K}_n\leq 2^{n+1}\mathcal{W}<\infty$.
\end{proof}

To set the scene for the proof of the uniform boundedness of the $\mathcal{K}_n$, we pick two functions $f,g\in\mathscr{F}$ as in the definition of $\mathcal{K}_n$, so that $B(f,g)$ almost achieves the supremum defining $\mathcal{K}_n$:
\begin{equation}\label{eq:chooseMaximizers}
  \mathcal{K}_n\leq 2 B(f,g),\qquad\Norm{f}{L^2(\sigma)}=\Norm{g}{L^2(w)}=1.
\end{equation}

As in the proof of Lemma~\ref{lem:aPrioriBound}, let $I_0$ be the union of $2^n$ minimal intervals supporting both $f$ and $g$.
However, we now expand $f$ and $g$ in terms of a (randomly, eventually) shifted dyadic system $\mathscr{D}$. We only care about intervals of length at most $I_0$, so we may define $\mathscr{D}$ as $\mathscr{D}=\mathscr{D}+\omega$, where $\omega\in[0,\abs{I_0})$ is a multiple of the minimal length $2^{-m}$.

In the system $\mathscr{D}$, there are at most two intervals of length $\abs{I_0}$, say $I_1$ and $I_2$, that intersect $I_0$. Thus
\begin{equation*}
  f=\sum_{i=1}^2\Big(E_{I_i}^\sigma f+\sum_{\substack{I\in\mathscr{D} \\ I\subseteq I_i}}\Delta_I^\sigma f\Big)
  =:f_1^\omega+f_2^\omega,
\end{equation*}
and a similar expansion holds for $g$.

For both $f_i^\omega$, we make the further standard decomposition into ``good'' and ``bad'' parts,
\begin{equation*}
  f_i^\omega=E_{I_i}^\sigma f
   +\sum_{\substack{I\in\mathscr{D}; I\subseteq I_i \\ I \operatorname{good}}}\Delta_I^\sigma f
   +\sum_{\substack{I\in\mathscr{D}; I\subseteq I_i \\ I \operatorname{bad}}}\Delta_I^\sigma f  
   =:f_{i,0}^\omega+f_{i,\operatorname{good}}^\omega+f_{i,\operatorname{bad}}^\omega,
\end{equation*}
where an interval $I\in\mathscr{D}$ is called bad (with parameters $\gamma\in(0,1)$, $r\in\N$) if
\begin{equation*}
  \dist(I,J^c)\leq\abs{I}^\gamma\abs{J}^{1-\gamma}\quad
  \text{for some }J=I^{(k)},\quad k\geq r,
\end{equation*}
where $I^{(k)}$ refers to the $k$th dyadic ancestor of $I$ in the dyadic system $\mathscr{D}$, and we only consider $k$ such that $\abs{I^{(k)}}\leq\abs{I_0}$.

The important standard property is that
\begin{equation*}
  \Exp_\omega\Norm{f_{i,\operatorname{bad}}^\omega}{L^2(\sigma)}\leq \epsilon\Norm{f}{L^2(\sigma)}=\epsilon,
\end{equation*}
where $\epsilon=\epsilon(\gamma,r)\to 0$ as $r\to\infty$. We may fix $\gamma$ and $r$ so that, say, $\epsilon\leq\frac{1}{10}$.

Now we can make the following computations:
\begin{equation*}
  \mathcal{K}_n
  \leq 2\abs{B(f,g)}
  \leq 2\sum_{i=1}^2\abs{B(f_i^\omega,g_i^\omega)}+2\sum_{\substack{1\leq i,j\leq 2 \\ i\neq j }}\abs{B(f_i^\omega,g_j^\omega)}
\end{equation*}
and
\begin{equation*}
\begin{split}
  \abs{B(f_i^\omega,g_i^\omega)}
  &\leq\abs{B(f_{i,0}^\omega,g_i^\omega)}+\abs{B(f_{i,\operatorname{bad}}^\omega,g_i^\omega)} \\
  &\qquad +\abs{B(f_{i,\operatorname{good}}^\omega,g_{i,0}^\omega)}+\abs{B(f_{i,\operatorname{good}}^\omega,g_{i,\operatorname{bad}}^\omega)}
    +\abs{B(f_{i,\operatorname{good}}^\omega,g_{i,\good}^\omega)} \\
  &\leq\mathcal{H}\Norm{f_{i,0}^\omega}{L^2(\sigma)}\Norm{g_{i}^\omega}{L^2(w)}
    +\mathcal{K}_n\Norm{f_{i,\operatorname{bad}}^\omega}{L^2(\sigma)}\Norm{g_i^\omega}{L^2(w)}  \\
   &\qquad +\mathcal{H}^*\Norm{f_{i,\operatorname{good}}^\omega}{L^2(\sigma)}\Norm{g_{i,0}^\omega}{L^2(w)}
      +\mathcal{K}_n\Norm{f_{i,\operatorname{good}}^\omega}{L^2(\sigma)} \Norm{g_{i,\operatorname{bad}}^\omega}{L^2(w)} \\
  &\qquad        +\abs{B(f_{i,\operatorname{good}}^\omega,g_{i,\operatorname{good}}^\omega)} \\
  &\leq(\mathcal{H}+\mathcal{H}^*)
      +\mathcal{K}_n(\Norm{f_{i,\operatorname{bad}}^\omega}{L^2(\sigma)}+\Norm{g_{i,\operatorname{bad}}^\omega}{L^2(w)}) \\
   &\qquad +\abs{B(f_{i,\operatorname{good}}^\omega,g_{i,\operatorname{good}}^\omega)},
\end{split}
\end{equation*}
where we used the fact that all the splittings of our functions are orthogonal in the corresponding function spaces, and the functions $f,g$ have unit norm, as we assumed in \eqref{eq:chooseMaximizers}.

So, combining these bounds and taking expectations over the randomization parameter $\omega$, we find that
\begin{equation*}
\begin{split}
  \mathcal{K}_n
  \leq 4(\mathcal{H}+\mathcal{H}^*)+8\mathcal{K}_n\cdot\epsilon
  &+2\Exp_\omega\sum_{i=1}^2\abs{B(f_{i,\operatorname{good}}^\omega,g_{i,\operatorname{good}}^\omega)} \\
   &  +2\Exp_\omega\sum_{\substack{1\leq i,j\leq 2 \\ i\neq j }}\abs{B(f_i^\omega,g_j^\omega)},
\end{split}
\end{equation*}
or, using $8\mathcal{K}_n\cdot\epsilon\leq \frac45\mathcal{K}_n$ and absorbing this term on the left (here the finiteness of $\mathcal{K}_n$ is used),
\begin{equation}\label{eq:mainToProve}
\begin{split}
  \mathcal{K}_n\leq 20(\mathcal{H}+\mathcal{H}^*)
  &+10\Exp_\omega\sum_{i=1}^2\abs{B(f_{i,\operatorname{good}}^\omega,g_{i,\operatorname{good}}^\omega)} \\
   &+10\Exp_\omega\sum_{\substack{1\leq i,j\leq 2 \\ i\neq j }}\abs{B(f_i^\omega,g_j^\omega)}.
\end{split}
\end{equation}
Now the heart of the proof will be concerned with bounding the last two terms on the right. The randomization parameter $\omega$ is no longer relevant, but we will estimate these terms uniformly in $\omega$.

The term involving the good parts is the hard core of all boundedness results of singular integrals in our genre. The other part, involving the functions $f_i^\omega$ and $g_j^\omega$ with $i\neq j$, is not completely trivial either, although an order of magnitude easier than the hard core. The functions here are supported on disjoint adjacent intervals, so that the bilinear form is completely determined by its kernel, and the relevant estimate is in effect a positive kernel result, with no role for cancellation.

\section{Positive kernel theory}

This section is concerned with the part of the theory that only uses the upper bound $\frac{1}{\abs{x-y}}$ for the kernel of the Hilbert transform, with no role for cancellation. This will already be enough to estimate the last term in \eqref{eq:mainToProve}.

\subsection{Muckenhoupt's theory for the Hardy operator}

The two-weight problem for the Hardy operator was resolved by Muckenhoupt \cite{Muckenhoupt:72} as early as 1972. However, his result is not stated in the nowadays preferred dual-weight formulation, and is not literally applicable for our purposes in the generality of an arbitrary pair of measures. For this reason, we include the present section to reproduce Muckenhoupt's theory in a form suitable for the subsequent analysis.

We prove the result for general measures by adapting Muckenhoupt's original argument for measures with density. This strategy was suggested by Muckenhoupt as a possibility, although he preferred to treat his version of the general case by approximating with the continuous case instead. As pointed out by Muckenhoupt \cite[bottom of p.~36]{Muckenhoupt:72}, the direct argument requires the following inequality, which ``is true but the proof is not particularly simple.''

\begin{lemma}\label{lem:wxInf}
For any $t\in\R$, any measure $w$ on $[t,\infty)$ and $\alpha\in(0,1)$, we have
\begin{equation*}
  \int_{[t,\infty)}w[x,\infty)^{-\alpha}\ud w(x)\leq\frac{w[t,\infty)^{1-\alpha}}{1-\alpha}.
\end{equation*}
\end{lemma}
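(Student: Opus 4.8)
The plan is to reduce the statement to a one-dimensional calculus fact by a layer-cake (distribution function) argument applied to the nonincreasing function $x\mapsto w[x,\infty)$. First I would set $F(x):=w[x,\infty)$, which is nonincreasing and right-continuous (for a Radon measure) on $[t,\infty)$, with $F(t)=w[t,\infty)=:M$. The integral $\int_{[t,\infty)}F(x)^{-\alpha}\,\ud w(x)$ is then the integral of a monotone function of $F$ against the measure $\ud w$; the key point is that $\ud w$ is (up to sign) the ``differential'' of $F$, so we are really integrating $F^{-\alpha}\,\ud(-F)$ over the range of $F$.

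Concretely, I would argue as follows. For $0<s\le M$, let $x_s:=\inf\{x\ge t: F(x)<s\}$ (with the convention $x_s=\infty$ if the set is empty), so that $\{x: F(x)\ge s\}=[t,x_s)$ up to an endpoint, and $w[t,x_s)\le s$ while $w[t,x_s]\ge s$. A clean way to avoid fussing over atoms is to push forward: the image measure $\nu:=F_*(w|_{[t,\infty)})$ on $(0,M]$ satisfies $\nu(0,s]=w(\{x: F(x)\le s\})\le s$ for every $s$, because $\{F\le s\}$ is (essentially) an interval $[x_s,\infty)$ whose $w$-measure is $F(x_s)\le s$ (using right-continuity of $F$; at a jump of $F$ the inequality is where the stated subtlety lives). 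Hence
\begin{equation*}
  \int_{[t,\infty)}F(x)^{-\alpha}\,\ud w(x)=\int_{(0,M]}s^{-\alpha}\,\ud\nu(s)
  =\int_{(0,M]}\Big(\int_{(0,M]}\alpha r^{-\alpha-1}1_{\{r>s\}}\,\ud r+M^{-\alpha}\Big)\ud\nu(s),
\end{equation*}
and by Fubini this equals $\int_{(0,M]}\alpha r^{-\alpha-1}\nu(0,r)\,\ud r+M^{-\alpha}\nu(0,M)\le \int_0^M \alpha r^{-\alpha-1}\cdot r\,\ud r+M^{-\alpha}\cdot M=\frac{\alpha}{1-\alpha}M^{1-\alpha}+M^{1-\alpha}=\frac{M^{1-\alpha}}{1-\alpha}$, which is exactly the claimed bound with $M=w[t,\infty)$.

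The one genuine obstacle, and presumably the reason Muckenhoupt called the proof ``not particularly simple,'' is the handling of atoms of $w$: at a point $a$ where $F$ jumps, one has $\{x:F(x)\le s\}$ changing discontinuously in $s$, and the naive estimate $\nu(0,s]\le s$ must be checked precisely there — one wants $w(\{x:F(x)\le s\})\le s$, i.e. $w[x_s,\infty)\le s$, which holds by right-continuity of $F$ since $F(x_s)=\lim_{x\downarrow x_s}F(x)\le s$. Once the correct choice $x_s=\inf\{x:F(x)<s\}$ is made, this goes through; the only alternative pitfall is an off-by-an-atom error of the form replacing $F(x)$ by $F(x^-)$ in the integrand, so I would be careful to verify which one-sided limit the integrand sees $\ud w$-a.e. (it sees $F(x)=w[x,\infty)$, the closed-ray mass, consistently with the statement). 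With that point nailed down, the layer-cake computation above is routine and gives the sharp constant $\tfrac{1}{1-\alpha}$.
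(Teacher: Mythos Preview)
Your approach is correct and genuinely different from the paper's. The paper discretizes: it approximates $w[x,\infty)^{-\alpha}$ from below by step functions on grids $t^n_j=t+j2^{-n}$, reducing the integral to a sum $\sum_j w_j^{-\alpha}(w_j-w_{j+1})$ with $w_j=w[t^n_j,\infty)$, bounds each summand via the mean value theorem by $\frac{1}{1-\alpha}(w_j^{1-\alpha}-w_{j+1}^{1-\alpha})$, telescopes, and passes to the limit by monotone convergence. Your pushforward/layer-cake argument is more conceptual: once $\nu(0,s]\le s$ is established, the rest is a routine Fubini computation that also makes the sharpness of the constant $\frac{1}{1-\alpha}$ transparent (equality when $\nu$ is Lebesgue measure on $(0,M]$). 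The paper's route has the minor advantage of being completely elementary, with no measure-theoretic bookkeeping beyond monotone convergence.

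One correction: $F(x)=w[x,\infty)$ is \emph{left}-continuous, not right-continuous; as $x_n\downarrow x$ strictly one has $[x_n,\infty)\uparrow(x,\infty)$, so $F(x^+)=w(x,\infty)=F(x)-w\{x\}$. Your justification of the key estimate $\nu(0,s]\le s$ invokes the wrong continuity, but the estimate itself is correct and the fix is easy: the set $\{x\ge t:F(x)\le s\}$ is an upper ray, and whether it equals $[a,\infty)$ (in which case $F(a)\le s$) or $(a,\infty)$ (in which case $F(a^+)=\sup_{x>a}F(x)\le s$), its $w$-measure is respectively $F(a)$ or $F(a^+)$, hence $\le s$ either way. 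With that adjustment your argument goes through cleanly.
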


\begin{proof}
If $w[t,\infty)=\infty$, there is nothing to prove, so we assume that $w[x,\infty)\leq w[t,\infty)<\infty$ for all $x\geq t$. As $t\leq x_n\uparrow x$, we have $[x_n,\infty)\downarrow[x,\infty)$, thus $w[x_n,\infty)\downarrow w[x,\infty)$, and hence $w[x_n,\infty)^{-\alpha}\uparrow w[x,\infty)^{-\alpha}$.

For every $n\in\Z_+$, consider the grid of points $t^n_j:=t+j2^{-n}$, $j\in\N$, on $[t,\infty)$, and let $\phi_n(x)$ be the largest of these grid points such that $\phi_n(x)\leq x$. It follows that $\phi_n(x)\uparrow x$ and hence $w[\phi_n(x),\infty)^{-\alpha}\uparrow w[x,\infty)^{-\alpha}$ for every $x\in[t,\infty)$, and thus by monotone convergence
\begin{equation*}
  \int_{[t,\infty)}w[x,\infty)^{-\alpha}\ud w(x)
  =\lim_{n\to\infty}\int_{[t,\infty)}w[\phi_n(x),\infty)^{-\alpha}\ud w(x)
\end{equation*}
where
\begin{equation*}
\begin{split}
    \int_{[t,\infty)}w[\phi_n(x),\infty)^{-\alpha}\ud w(x) 
    &=\sum_{j=0}^\infty\int_{[t^n_j,t^n_{j+1})}w[t^n_j,\infty)^{-\alpha}\ud w(x) \\
    &=\sum_{j=0}^\infty  w[t^n_j,\infty)^{-\alpha}w[t^n_j,t^n_{j+1})
      =:\sum_{j=0}^\infty  w_j^{-\alpha}(w_j-w_{j+1}),
\end{split}
\end{equation*}
where (suppressing the fixed $n$), $w_j:=w[t^n_j,\infty)$.

By the mean value theorem, we have
\begin{equation*}
  w_j^{1-\alpha}-w_{j+1}^{1-\alpha}
  =(1-\alpha)\xi_j^{-\alpha}(w_j-w_{j+1})\geq(1-\alpha)w_j^{-\alpha}(w_j-w_{j+1}),
\end{equation*}
and hence
\begin{equation*}
  \sum_{j=0}^\infty  w_j^{-\alpha}(w_j-w_{j+1})
  \leq\frac{1}{1-\alpha}\sum_{j=0}^\infty(w_j^{1-\alpha}-w_{j+1}^{1-\alpha})
  \leq\frac{1}{1-\alpha}w_0^{1-\alpha}
  =\frac{w[t,\infty)^{1-\alpha}}{1-\alpha}
\end{equation*}
by summing a telescopic series. This completes the proof.
\end{proof}

\begin{lemma}\label{lem:w0x}
For any $t>0$, any measure $\sigma$ on $(0,t]$ and $\alpha\in(0,1)$, we have
\begin{equation*}
  \int_{(0,t]}\sigma(0,x]^{-\alpha}\ud\sigma(x)\leq\frac{\sigma(0,t]^{1-\alpha}}{1-\alpha}.
\end{equation*}
\end{lemma}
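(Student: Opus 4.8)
The plan is to obtain this statement from the previous lemma by reflecting the real line, rather than by repeating the (slightly awkward) limiting argument. Concretely, I would introduce the reflected measure $\tilde\sigma$, namely the pushforward of $\sigma$ under $x\mapsto -x$. Since $\sigma$ is carried by $(0,t]$, the measure $\tilde\sigma$ is carried by $[-t,0)$, and we may regard it as a measure on $[-t,\infty)$ by extending it by zero. The key dictionary entry is that for $y<0$ one has $\tilde\sigma[y,\infty)=\tilde\sigma[y,0)=\sigma(0,-y]$, because reflection sends the half-open interval $(0,-y]$ onto $[y,0)$; the bookkeeping of half-open versus half-closed endpoints matters only insofar as it guarantees that the monotone-limit arguments from the proof of the previous lemma transfer verbatim to $\tilde\sigma$.

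Next, I would perform the change of variables $x=-y$ in the integral on the left-hand side. This rewrites $\int_{(0,t]}\sigma(0,x]^{-\alpha}\,\ud\sigma(x)$ as $\int_{[-t,0)}\tilde\sigma[y,\infty)^{-\alpha}\,\ud\tilde\sigma(y)$, which equals $\int_{[-t,\infty)}\tilde\sigma[y,\infty)^{-\alpha}\,\ud\tilde\sigma(y)$ since $\tilde\sigma$ places no mass on $[0,\infty)$. Applying the previous lemma to $\tilde\sigma$ on $[-t,\infty)$ with the same exponent $\alpha$ bounds this quantity by $\dfrac{\tilde\sigma[-t,\infty)^{1-\alpha}}{1-\alpha}$, and $\tilde\sigma[-t,\infty)=\tilde\sigma[-t,0)=\sigma(0,t]$, which is exactly the asserted bound.

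I do not expect any real obstacle: the only steps requiring a word of care are the endpoint bookkeeping noted above and the trivial reduction to the case $\sigma(0,t]<\infty$ (otherwise the right-hand side is infinite and there is nothing to prove). An alternative, if one prefers to avoid reflection, is to mimic the proof of the previous lemma directly, replacing the grid $t^n_j=t+j2^{-n}$ on $[t,\infty)$ by a grid on $(0,t]$ and the quantities $w[t^n_j,\infty)$ by $\sigma(0,t^n_j]$, the telescoping sum and the mean value theorem step going through unchanged; but the reflection argument makes this repetition unnecessary.
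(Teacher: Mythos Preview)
Your reflection argument is correct. The paper does not spell out a proof at all; it simply says ``The proof is analogous to the previous one,'' i.e., rerun the grid-and-telescoping argument with $\sigma(0,t^n_j]$ in place of $w[t^n_j,\infty)$ (exactly the alternative you sketch at the end). Your reduction via the pushforward under $x\mapsto -x$ is a clean shortcut that avoids this repetition: the endpoint bookkeeping you flag (that $(0,-y]$ maps to $[y,0)$, so $\tilde\sigma[y,\infty)=\sigma(0,-y]$) is precisely what makes the half-open intervals line up, and the previous lemma then applies verbatim with base point $-t$. Both approaches are equally elementary; yours has the minor advantage of not re-opening the monotone-convergence and mean-value-theorem machinery.
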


\begin{proof}
The proof is analogous to the previous one.
\end{proof}

Now we are ready for a dual-weight formulation of Muckenhoupt's inequality for the Hardy operator.

\begin{theorem}[Muckenhoupt]\label{thm:Muckenhoupt}
For a $\sigma$-finite measure $\sigma$ on $(0,\infty)$, and another measure $w$, the inequality
\begin{equation*}
  \BNorm{x\mapsto\int_{(0,x]}f\ud\sigma}{L^2((0,\infty),w)}
  \leq C\Norm{f}{L^2((0,\infty),\sigma)}
\end{equation*}
holds for all $f\in L^2(\sigma)$ if and only if
\begin{equation}\label{eq:HardyMuckConst}
  A:=\sup_{t>0}\sigma(0,t]^{1/2} w[t,\infty)^{1/2}<\infty.
\end{equation}
Moreover, the optimal constant $C$ satisfies
\begin{equation*}
  A\leq C\leq 2A.
\end{equation*}
\end{theorem}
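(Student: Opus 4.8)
The plan is to follow Muckenhoupt's classical two-step scheme, using the two preceding lemmas on the "telescoping" integrals $\int w[x,\infty)^{-\alpha}\ud w$ and $\int\sigma(0,x]^{-\alpha}\ud\sigma$ as the technical engine. For \emph{necessity}, I would test the operator $f\mapsto\int_{(0,x]}f\ud\sigma$ on $f=1_{(0,t]}$: its image dominates $\sigma(0,t]\cdot 1_{[t,\infty)}$, so the inequality gives $\sigma(0,t]\,w[t,\infty)^{1/2}\le C\sigma(0,t]^{1/2}$, hence $\sigma(0,t]^{1/2}w[t,\infty)^{1/2}\le C$ whenever $\sigma(0,t]\in(0,\infty)$ (the $\sigma$-finiteness and Radon properties handle the degenerate cases by a limiting argument in $t$), which is exactly $A\le C$.

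For \emph{sufficiency}, assume $A<\infty$ and set $S(x):=\int_{(0,x]}f\ud\sigma$ for $f\ge0$ (the general case follows by splitting into positive and negative parts, or by noting the operator is positivity-preserving so $|Sf|\le S|f|$). I would introduce the weight-adapted "multiplier" $m(x):=\sigma(0,x]^{1/2}$, which is finite $\sigma$-a.e. by $\sigma$-finiteness, and estimate by Cauchy--Schwarz with the split $1=m(y)^{1/2}\cdot m(y)^{-1/2}$ inside the $y$-integral:
\begin{equation*}
  S(x)=\int_{(0,x]}f(y)\,\sigma(0,y]^{1/4}\cdot\sigma(0,y]^{-1/4}\ud\sigma(y)
  \le\Big(\int_{(0,x]}f^2\sigma(0,y]^{1/2}\ud\sigma(y)\Big)^{1/2}\Big(\int_{(0,x]}\sigma(0,y]^{-1/2}\ud\sigma(y)\Big)^{1/2}.
\end{equation*}
The second factor is controlled by the Lemma (with $\alpha=\tfrac12$) by $(2\sigma(0,x]^{1/2})^{1/2}$. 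Squaring, integrating $\ud w(x)$, and applying Tonelli to swap the order of integration, the $x$-integral that remains is $\int_{[y,\infty)}\sigma(0,x]^{1/2}\ud w(x)$, which I would dominate crudely by first replacing $\sigma(0,x]^{1/2}$ with its "telescoped" version along a dyadic grid in $x$ and invoking the companion Lemma on $w$; here is where the two grid lemmas are used in tandem, and the constant $A$ enters through $\sigma(0,x]^{1/2}w[x,\infty)^{1/2}\le A$. Collecting terms yields $\Norm{Sf}{L^2(w)}\le 2A\Norm{f}{L^2(\sigma)}$.

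The main obstacle I anticipate is the careful bookkeeping in the Tonelli step and the second grid estimate: one must ensure the iterated integral $\int\!\!\int_{y\le x}f(y)^2\sigma(0,y]^{1/2}\sigma(0,x]^{-1/2}\ud\sigma(y)\ud w(x)$ is manipulated only using the monotone "staircase" approximations $\phi_n$ from the lemmas (rather than differentiating $\sigma(0,x]$, which need not be absolutely continuous), and that the resulting constant is genuinely $2A$ and not worse. The $\sigma$-finiteness hypothesis is needed precisely so that $\sigma(0,y]<\infty$ for $\sigma$-a.e.\ $y$, making the weight split legitimate; if $\sigma(0,y]=\infty$ on a set of positive $\sigma$-measure, one checks directly that $A<\infty$ forces $w[y,\infty)=0$ there, so that part of the range contributes nothing to $\Norm{Sf}{L^2(w)}$.
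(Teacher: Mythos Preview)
Your proposal is correct and follows essentially the same route as the paper: test on indicators for necessity, and for sufficiency split the integrand via $\sigma(0,y]^{1/4}\cdot\sigma(0,y]^{-1/4}$, apply Cauchy--Schwarz, use the $\sigma$-lemma on the second factor, Tonelli, and finish with the $w$-lemma. The anticipated ``obstacle'' is in fact simpler than you fear: once you arrive at $\int_{[y,\infty)}\sigma(0,x]^{1/2}\ud w(x)$, no further grid approximation is needed---just substitute $\sigma(0,x]^{1/2}\le A\,w[x,\infty)^{-1/2}$ pointwise and invoke the $w$-lemma directly to get $\le 2A\,w[y,\infty)^{1/2}$; then one more use of $A$ gives the constant $4A^2$ (note the sign slip in your last display: it should be $\sigma(0,x]^{1/2}$, not $\sigma(0,x]^{-1/2}$). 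For necessity, the ``limiting argument in $t$'' is not quite the right fix when $\sigma(0,t]=\infty$; the paper instead tests on $1_E$ for $E\subseteq(0,t]$ with $\sigma(E)<\infty$ and then exhausts via $\sigma$-finiteness, which forces $w[t,\infty)=0$ in that case.
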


As indicated, this Theorem is essentially due to Muckenhoupt~\cite{Muckenhoupt:72}. It is also stated more or less in the present form but without proof in \cite[Eqs.~(2.10--2.11)]{LSU:2wHilbertEnergy}.

\begin{proof}
``$\Rightarrow$''
Let $f=1_{E}$, with $E\subseteq(0,t]$. Then $\int_{(0,x]}f\ud\sigma=\sigma(E)$ for all $x\geq t$, and we obtain
\begin{equation*}
  \Norm{\sigma(E)\cdot 1_{[t,\infty)}}{L^2(w)}\leq C\Norm{1_{E}}{L^2(\sigma)},
\end{equation*}
i.e., $\sigma(E)w[t,\infty)^{1/2}\leq C\sigma(E)^{1/2}$. If $\sigma(E)<\infty$, we deduce $\sigma(E)^{1/2}w[t,\infty)^{1/2}\leq C$. By $\sigma$-finiteness, $(0,t]$ can be exhausted by countably many such sets, and we deduce that $\sigma(0,t]^{1/2}w[t,\infty)^{1/2}\leq C$.

``$\Leftarrow$'' Inside the integral
\begin{equation}\label{eq:HardyToEst}
   \int_{(0,x]}f(y)\ud\sigma(y)
\end{equation}
to be estimated, we wish to multiply and divide by $a(y):=\sigma(0,y]^{1/4}$. Let us check that this is legitimate.
 
 If $a(z)=\infty$, then $w[z,\infty)=0$ by assumption \eqref{eq:HardyMuckConst}. So our integration in $\ud w(x)$ only extends over $x\in(0,z)$. Thus the interval $(0,x]$ appearing in \eqref{eq:HardyToEst} is always smaller than $(0,z)$, so the point $z$ with $a(z)=\infty$ stays outside. If $a(z)=0$, then $\int_{(0,z]}f\ud\sigma=0$, so we can drop this part of the integral as well. Hence no zero-division problems occur, and we may proceed:
\begin{equation}\label{eq:HardyStart}
\begin{split}
  \int_{(0,\infty)} &\Big(\int_{(0,x]}f(y)a(y)a(y)^{-1}\ud\sigma(y)\Big)^2 \ud w(x) \\
  &\leq\int_{(0,\infty)}\int_{(0,x]}f(y)^2 a(y)^2\ud\sigma(y)\int_{(0,x]}a(z)^{-2}\ud\sigma(z) \ud w(x) \\
  &=\int_{(0,\infty)}f(y)^2\sigma(0,y]^{1/2}
     \Big( \int_{[ y,\infty)} \int_{(0,x]}\sigma(0,z]^{-1/2}\ud\sigma(z) \ud w(x)  \Big) \ud\sigma(y). \\
\end{split}
\end{equation}
The quantity in parentheses is estimated by
\begin{equation*}
\begin{split}
     \int_{[y,\infty)} & \int_{(0,x]}\sigma(0,z]^{-1/2}\ud\sigma(z) \ud w(x)
     \leq\int_{[ y,\infty)} 2\sigma(0,x]^{1/2} \ud w(x) \\
    & \leq 2A\int_{[ y,\infty)}w[x,\infty)^{-1/2}\ud w(x)
    \leq 2A\cdot 2w[ y,\infty)^{1/2},
\end{split}
\end{equation*}
where the three steps used, in this order, Lemma \ref{lem:w0x}, assumption \eqref{eq:HardyMuckConst}, and Lemma \ref{lem:wxInf}.
Substituting back to \eqref{eq:HardyStart}, we get
\begin{equation*}
\begin{split}
  \int_{(0,\infty)} &\Big(\int_{(0,x]}f(y)\ud\sigma(y)\Big)^2 \ud w(x)  \\
  &\leq\int_{(0,\infty)}f(y)^2\sigma(0,y]^{1/2} 4A w[ y,\infty)^{1/2}\ud \sigma(y)
  \leq 4A^2\int_{(0,\infty)}f(y)^2\ud\sigma(y),
\end{split}  
\end{equation*}
where \eqref{eq:HardyMuckConst} was used another time.
\end{proof}


\subsection{The Hilbert transform on complementary half-lines}

The following simple Corollary to Theorem~\ref{thm:Muckenhoupt} makes the bridge from the Hardy operator to the Hilbert transform.

\begin{corollary}\label{cor:Muckenhoupt}
Let $\sigma,w$ be $\sigma$-finite measures on $(0,\infty)$.
The estimate
\begin{equation*}
  \Babs{\iint_{(0,\infty)^2}\frac{1}{x+y}f(y)\ud\sigma(y)g(x)\ud w(x)}
  \leq C\Norm{f}{L^2(\sigma)}\Norm{g}{L^2(w)}
\end{equation*}
holds, if and only if
\begin{equation*}
  A:=\sup_{t>0}\Big(\sigma(0,t]\int_{[t,\infty)}\frac{\ud w(x)}{x^2}\Big)^{1/2}+
  \sup_{t>0}\Big(w(0,t]\int_{[t,\infty)}\frac{\ud \sigma(x)}{x^2}\Big)^{1/2}<\infty.
\end{equation*}
Moreover, the optimal constant $C$ satisfies $\frac14A\leq C\leq 2A$.
\end{corollary}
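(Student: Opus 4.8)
The plan is to reduce the bilinear estimate with kernel $1/(x+y)$ on $(0,\infty)^2$ to two applications of Muckenhoupt's Hardy inequality, Theorem~\ref{thm:Muckenhoupt}, by splitting the domain along the diagonal. Write the bilinear form $\Lambda(f,g):=\iint_{(0,\infty)^2}\frac{1}{x+y}f(y)g(x)\,\ud\sigma(y)\ud w(x)$ and decompose $(0,\infty)^2=\{y\le x\}\cup\{x< y\}=:R_1\cup R_2$. On $R_1$, where $y\le x$, we have $x\le x+y\le 2x$, so $\frac{1}{x+y}\eqsim\frac1x$, and hence
\begin{equation*}
  \iint_{R_1}\frac{f(y)g(x)\,\ud\sigma(y)\ud w(x)}{x+y}
  \eqsim\int_{(0,\infty)}\frac{g(x)}{x}\Big(\int_{(0,x]}f\,\ud\sigma\Big)\ud w(x),
\end{equation*}
for $f,g\ge 0$. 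By Cauchy--Schwarz in $L^2(w)$ this is bounded by $\big\|x\mapsto\int_{(0,x]}f\ud\sigma\big\|_{L^2(\ud w(x)/x^2)}\|g\|_{L^2(w)}$, and applying Theorem~\ref{thm:Muckenhoupt} with $w$ replaced by the measure $\ud w(x)/x^2$ controls the first factor by $2A_1\|f\|_{L^2(\sigma)}$, where $A_1=\sup_{t>0}(\sigma(0,t]\int_{[t,\infty)}x^{-2}\ud w(x))^{1/2}$. The region $R_2$ is handled symmetrically, with the roles of $(\sigma,f)$ and $(w,g)$ interchanged, giving the term $A_2=\sup_{t>0}(w(0,t]\int_{[t,\infty)}x^{-2}\ud\sigma(x))^{1/2}$. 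Summing the two contributions gives the upper bound $C\lesssim A$, and tracking the constants ($\frac1x\le\frac{1}{x+y}$ on $R_1$ versus $\frac{2}{x+y}$, etc., and the factor $2$ from Muckenhoupt) yields $C\le 2A$.

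For the necessity direction ``only if'', I would test the inequality on indicators. Fix $t>0$; take $f=1_E$ with $E\subseteq(0,t]$ and $g=1_F$ with $F\subseteq[t,\infty)$ bounded. For $y\in E$ and $x\in F$ we have $x\le x+y\le 2x$, so $\frac{1}{x+y}\ge\frac{1}{2x}$, whence
\begin{equation*}
  \frac12\sigma(E)\int_F\frac{\ud w(x)}{x}\le\Lambda(1_E,1_F)\le C\,\sigma(E)^{1/2}w(F)^{1/2}.
\end{equation*}
Now specialize $g$ further — the cleanest choice is $g(x)=1_{[t,\infty)}(x)/x$, which lies in $L^2(w)$ precisely because $\int_{[t,\infty)}x^{-2}\ud w(x)<\infty$ is what we are trying to extract; then $\frac12\sigma(E)\int_{[t,\infty)}x^{-2}\ud w(x)\le C\sigma(E)^{1/2}(\int_{[t,\infty)}x^{-2}\ud w(x))^{1/2}$, and dividing (when the factors are finite and nonzero, the other cases being trivial) gives $\sigma(0,t]^{1/2}(\int_{[t,\infty)}x^{-2}\ud w(x))^{1/2}\le 2C$ after exhausting $(0,t]$ by sets of finite $\sigma$-measure using $\sigma$-finiteness. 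The symmetric test controls the second supremum in $A$, and adding the two yields $A\le 4C$, i.e. $\frac14 A\le C$.

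The only genuine subtlety is bookkeeping around finiteness and division by zero, exactly as in the proof of Theorem~\ref{thm:Muckenhoupt} and Lemma~\ref{lem:A2nec}: one must check that $\sigma(0,t]$, $w(0,t]$ and the Poisson-type tail integrals are finite before dividing, and approximate with bounded truncations $F\subset[t,\infty)$ (letting $F\uparrow[t,\infty)$ by monotone convergence) so that all test functions genuinely lie in the relevant $L^2$ spaces. The $\sigma$-finiteness hypothesis on both measures is used here, just as in Theorem~\ref{thm:Muckenhoupt}, to pass from indicators of finite-measure sets to $\sigma(0,t]$. I expect the diagonal splitting and the two invocations of Muckenhoupt's theorem to be entirely routine; the main (mild) obstacle is simply organizing the constant chase and the degenerate cases so that the stated bounds $\frac14 A\le C\le 2A$ come out cleanly.
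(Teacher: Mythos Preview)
Your proposal is correct and follows essentially the same route as the paper: split along the diagonal, compare $1/(x+y)$ with $1/x$ (resp.\ $1/y$), and reduce each piece to Theorem~\ref{thm:Muckenhoupt} applied with the weight $\ud w(x)/x^2$ (resp.\ $\ud\sigma(y)/y^2$). The only cosmetic difference is that the paper packages the necessity direction via the pointwise comparison $\tfrac12\Lambda_i\le\Lambda$ together with the necessity half of Muckenhoupt's theorem, whereas you unwind that step into the direct indicator/test-function argument; these amount to the same computation and yield the same constants $\tfrac14 A\le C\le 2A$.
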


\begin{proof}
It suffices to consider nonnegative $f,g$. Writing
\begin{equation*}
\begin{split}
  \Lambda(f,g) &  :=\iint_{(0,\infty)^2}\frac{1}{x+y}f(y)\ud\sigma(y)g(x)\ud w(x), \\
  \Lambda_1(f,g) &  :=\int_{(0,\infty)}\frac{1}{x}\int_{(0,x]}f(y)\ud\sigma(y)g(x)\ud w(x), \\
  \Lambda_2(f,g) &  :=\int_{(0,\infty)}\frac{1}{y}\int_{(0,y]}g(x)\ud w(x)f(y)\ud \sigma(y),
\end{split}
\end{equation*}
it is immediate that
\begin{equation*}
  \frac12\max_{i=1,2}\Lambda_i(f,g)\leq\Lambda(f,g)\leq\Lambda_1(f,g)+\Lambda_2(f,g).
\end{equation*}

Let $C_i$ be the best constant in the inequality $\abs{\Lambda_i(f,g)}\leq C_i\Norm{f}{L^2(\sigma)}\Norm{g}{L^2(w)}$. It follows that $\frac12\max\{C_1,C_2\}\leq C\leq C_1+C_2$. By Cauchy--Schwarz, $C_1$ is also the best constant in the estimate
\begin{equation*}
   \Big(\int_{(0,\infty)}\Big(\int_{(0,x]}f(y)\ud\sigma(y)\Big)^2\frac{\ud w(x)}{x^2}\Big)^{1/2}\leq C_1\Norm{f}{L^2(\sigma)},
\end{equation*}
which by Muckenhoupt's theorem (applied to $\ud w(x)/x^2$ in place of $\ud w(x)$) satisfies
\begin{equation*}
  A_1\leq C_1\leq 2A_1,\qquad A_1:=\Big(\sup_{t>0}\sigma(0,t]\int_{[t,\infty)}\frac{\ud w(x)}{x^2}\Big)^{1/2}.
\end{equation*}
An analogous claim is true for $C_2$ and $A_2$ defined with the roles of $\sigma$ and $w$ reversed. Observing that $A=A_1+A_2$, we find that
\begin{equation*}
  \frac14 A 
  \leq\frac12\max\{A_1,A_2\}
  \leq\frac12\max\{C_1,C_2\}
  \leq C\leq C_1+C_2\leq 2A.\qedhere 
\end{equation*}
\end{proof}

We are now ready to give a full characterization of the restricted two-weight boundedness of the Hilbert transform, where its bilinear form is applied to functions supported on complementary half-lines. It is not surprising that this restricted boundedness is determined by the $A_2$ condition alone. Still, this result appears to be unrecorded in the earlier literature on the two-weight problem.

\begin{theorem}\label{thm:complement}
Let $\sigma$ and $w$ be two Radon measures.
The estimate
\begin{equation*}
  \abs{B(f,g)}\leq C\Norm{f}{L^2(\sigma)}\Norm{g}{L^2(w)}
\end{equation*}
holds uniformly for all $f,g\in\mathscr{F}$ supported on complementary half-lines,
if and only if the measures satisfy the $A_2$ conditions \eqref{eq:newA2}.
Moreover, the best constant satisfies
\begin{equation*}
  C\eqsim[\sigma,w]_{A_2}^*+[w,\sigma]_{A_2}^*
  \eqsim\mathcal{H}_{\operatorname{off}}+\mathcal{H}_{\operatorname{off}}^*,
\end{equation*}
where the last two constants are defined as in \eqref{eq:HtestOff}.
\end{theorem}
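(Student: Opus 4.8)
The plan is to prove Theorem~\ref{thm:complement} by reducing the case of two complementary half-lines to the model situation of Corollary~\ref{cor:Muckenhoupt}, after first observing that it suffices to check boundedness for one representative split point. First I would note that a pair of complementary half-lines meeting at a point $p$ can be written, after the translation $x\mapsto x-p$, as $(-\infty,0)$ and $[0,\infty)$ (up to the harmless ambiguity of which side is which). Since we are working with functions in $\mathscr{F}$ supported in either half, the bilinear form is given by the kernel representation \eqref{eq:Bdisjoint}, so that with $f$ supported on the left and $g$ on the right,
\begin{equation*}
  B(f,g)=\iint_{(-\infty,0)\times[0,\infty)}\frac{1}{x-y}f(y)g(x)\ud\sigma(y)\ud w(x).
\end{equation*}
Substituting $y\mapsto -y$ turns this into an integral of $1/(x+y)$ over the first quadrant against the reflected measure $\tilde\sigma$ on $(0,\infty)$ defined by $\tilde\sigma(E):=\sigma(-E)$, and the roles of $f$ are relabelled accordingly. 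Thus the restricted boundedness at the split point $p$ is \emph{exactly} the inequality of Corollary~\ref{cor:Muckenhoupt} for the pair $(\tilde\sigma,w)$ (with an analogous statement for the split point with the sides interchanged, giving the reflected pair $(\sigma,\tilde w)$).

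Next I would unwind the resulting $A_2$ constant of Corollary~\ref{cor:Muckenhoupt}. Its quantity $A$ is a supremum over $t>0$ of $(\tilde\sigma(0,t]\int_{[t,\infty)}x^{-2}\ud w(x))^{1/2}$ plus the symmetric term. Translating back: $\tilde\sigma(0,t]=\sigma[-t,0)$ and $\int_{[t,\infty)}x^{-2}\ud w(x)$ becomes $\int_{[p,\infty)}(x-p)^{-2}\ud w(x)$ after undoing the shift, i.e.\ exactly the Poisson-with-holes quantity appearing in \eqref{eq:newA2} but anchored at the split point $p$ rather than at the centre $c_I$ of an interval. So the restricted boundedness over \emph{all} pairs of complementary half-lines is equivalent to
\begin{equation*}
  \sup_{p\in\R}\Big(\sigma[p,\infty)\int_{(-\infty,p)}\frac{\ud w(x)}{(x-p)^2}\Big)^{1/2}
    +\text{(three symmetric terms)}<\infty,
\end{equation*}
where the four terms account for the two choices of split point orientation and the swap of $\sigma$ and $w$. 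The remaining task is to show this half-line $A_2$ quantity is comparable to $[\sigma,w]_{A_2}^*+[w,\sigma]_{A_2}^*$. The direction $\gtrsim$ is easy: a half-line $[p,\infty)$ together with the integral over $(-\infty,p)$ is the $I\to[p,\infty)$ degeneration of an interval $I$ with $I^c\supseteq(-\infty,p)$ and $c_I\to p$; conversely, given a finite interval $I=[a,b)$ with centre $c_I$, the complement $I^c$ splits as $(-\infty,a)\cup[b,\infty)$, and on each piece $|x-c_I|$ is comparable (within a factor $2$) to the distance to the \emph{near} endpoint, so one bounds $\sigma(I)\int_{I^c}(x-c_I)^{-2}\ud w$ by the sum of two half-line terms anchored at $a$ and at $b$, using $\sigma(I)\le\sigma[a,\infty)$ and $\sigma(I)\le\sigma(-\infty,b)$. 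This gives $[\sigma,w]_{A_2}^*\lesssim$ the half-line quantity, and symmetrically for $[w,\sigma]_{A_2}^*$.

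For the final comparison with $\mathcal{H}_{\operatorname{off}}+\mathcal{H}_{\operatorname{off}}^*$, I would invoke Lemma~\ref{lem:A2nec}, which already gives $[\sigma,w]_{A_2}^*+[w,\sigma]_{A_2}^*\lesssim\mathcal{H}_{\operatorname{off}}+\mathcal{H}_{\operatorname{off}}^*$, and note the reverse bound follows because a split at the endpoint of $I$ makes $I$ and (part of) $I^c$ complementary half-lines, so the restricted boundedness constant $C$ dominates $\mathcal{H}_{\operatorname{off}}$ (and, by symmetry, $\mathcal{H}_{\operatorname{off}}^*$) up to the $A_2$-equivalence just established. The main obstacle I anticipate is purely bookkeeping: carefully tracking the four variants (left/right split, $\sigma\leftrightarrow w$) and the endpoint/point-mass technicalities — e.g.\ ensuring the half-open conventions and any atom at the split point $p$ do not spoil the reduction to Corollary~\ref{cor:Muckenhoupt} — but, exactly as in the parenthetical remark at the end of the proof of Lemma~\ref{lem:A2nec}, these are handled by an $\epsilon$-perturbation of the split point followed by a limit, since all the quantities involved are monotone and the measures are Radon.
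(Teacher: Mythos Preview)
Your overall strategy---translate the split point to the origin, reflect one half-line, and invoke Corollary~\ref{cor:Muckenhoupt}---is exactly the paper's approach. But there is a genuine computational error in the step where you ``undo the shift'', and it propagates fatally.

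After reflecting $\sigma$ to $\tilde\sigma$ and shifting so that the split point $p$ becomes $0$, the Muckenhoupt constant from Corollary~\ref{cor:Muckenhoupt} involves, for each $t>0$,
\[
  \tilde\sigma(0,t]\int_{[t,\infty)}\frac{\ud w(x)}{x^2}.
\]
Translating back, $\tilde\sigma(0,t]=\sigma[p-t,p)$ is correct, but $\int_{[t,\infty)}x^{-2}\ud w(x)$ becomes $\int_{[p+t,\infty)}(x-p)^{-2}\ud w(x)$, \emph{not} $\int_{[p,\infty)}(x-p)^{-2}\ud w(x)$ as you write. You have dropped the $t$ in the lower limit. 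The correct two-parameter supremum over $p$ and $t$ is, after the substitution $I=[p-t,p)$ (so $|I|=t$, right endpoint $p$), comparable to
\[
  \sup_I \sigma(I)\int_{(3I)^c}\frac{\ud w(x)}{\dist(x,I)^2},
\]
which is immediately dominated by $([\sigma,w]_{A_2}^*)^2$. This is precisely what the paper does.

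Your half-line quantity $\sup_p \sigma[p,\infty)\int_{(-\infty,p)}(x-p)^{-2}\ud w(x)$ is the wrong object and is not comparable to $[\sigma,w]_{A_2}^*$. For a concrete failure, take $\sigma=w=\ud x$: then $[\sigma,w]_{A_2}^*<\infty$ and the Hilbert transform is bounded, but your quantity has $\sigma[p,\infty)=\infty$ for every $p$, and also $\int_{(-\infty,p)}(x-p)^{-2}\ud x=\infty$. Consequently the ``easy direction $\gtrsim$'' you sketch via $I\to[p,\infty)$ degeneration cannot hold, and the subsequent comparison argument collapses. Once you retain the $t$, both directions of the comparison with $[\sigma,w]_{A_2}^*+[w,\sigma]_{A_2}^*$ become straightforward (and the $\mathcal H_{\operatorname{off}}$ part is then immediate from Lemma~\ref{lem:A2nec} and the definitions, as you note).
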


Note that the Theorem also applies (and will often be applied) to the case that $\spt f\subseteq I$ and $\spt g\subseteq I^c$ for some finite interval $I$. Indeed, we just split $g$ into the two parts supported to the left or to the right of $I$.

\begin{proof}
It is immediate that $\mathcal{H}_{\operatorname{off}}+\mathcal{H}_{\operatorname{off}}^*\lesssim C$, and we proved in Lemma~\ref{lem:A2nec} that $[\sigma,w]_{A_2}^*+[w,\sigma]_{A_2}^*\lesssim\mathcal{H}_{\operatorname{off}}+\mathcal{H}_{\operatorname{off}}^*$. Hence it only remain to show that $C\lesssim [\sigma,w]_{A_2}^*+[w,\sigma]_{A_2}^*$.

Clearly the bound on $B(f,g)$ holds for all $f,g$ if and only if it holds for nonnegative $f,g$.
Suppose for definiteness that $\spt f\subset(-\infty,a)$ and $\spt g\subset(a,\infty)$. (The case that, for example, $\spt f\subset(-\infty,a)$ and $\spt g\subset[a,\infty)$ is easily reduced to $\spt f\subset(-\infty,a')$ and $\spt g\subset(a',\infty)$ for a slightly smaller $a'<a$.) Then
\begin{equation}\label{eq:disjHTbegin}
\begin{split}
  B(f,g) &=\int_{(0,\infty)}\int_{(0,\infty)}\frac{1}{x+y} f(a-y)g(a+x)\ud w(a+x)\ud\sigma(a-y) \\
   &=:\int_{(0,\infty)}\int_{(0,\infty)}\frac{1}{x+y} f_a(y)g^a(x)\ud w^a(x)\ud\sigma_a(y).
\end{split}
\end{equation}
The constant $c$ in the upper bound $c\Norm{f_a}{L^2(\sigma_a)}\Norm{g^a}{L^2(w^a)}=c\Norm{f}{L^2(\sigma)}\Norm{g}{L^2(w)}$ of this expression is characterized by Corollary~\ref{cor:Muckenhoupt} as
\begin{equation*}
\begin{split}
  c^2 &\eqsim\sup_{t>0}\sigma_a(0,t]\int_{[t,\infty)}\frac{\ud w^a(x)}{x^2}+\sup_{t>0}w^a(0,t]\int_{[t,\infty)}\frac{\ud\sigma_a(y)}{y^2} \\
     &=\sup_{t>0}\sigma[a-t,a)\int_{[a+t,\infty)}\frac{\ud w(x)}{(x-a)^2}+\sup_{t>0}w(a,a+t]\int_{(-\infty,a-t]}\frac{\ud\sigma(y)}{(y-a)^2}.
\end{split}
\end{equation*}
Observing that we need to consider this estimate for arbitrary $a\in\R$, and arbitrary order of the supports of $f$ and $g$, we deduce that
\begin{equation*}
  C^2 \lesssim \sup_I \sigma(I)\int_{(3I)^c}\frac{\ud w(x)}{\dist(x,I)^2}+\sup_I w(I)\int_{(3I)^c}\frac{\ud w(x)}{\dist(x,I)^2}.
\end{equation*}
It is immediate that this is dominated by a constant multiple of $([\sigma,w]_{A_2}^*+[w,\sigma]_{A_2}^*)^2$.
\end{proof}

In particular, Theorem~\ref{thm:complement} provides the required bound for the last part in \eqref{eq:mainToProve}:
\begin{equation*}
  \abs{B(f_i^\omega,g_j^\omega)}
  \lesssim([\sigma,w]_{A_2}^*+[w,\sigma]_{A_2}^*)\Norm{f_i^\omega}{L^2(\sigma)}\Norm{g_j^\omega}{L^2(\sigma)},
\end{equation*}
since the functions here are supported on disjoint intervals $I_i$ and $I_j$.

We can also already complete the easier equivalence of Theorem~\ref{thm:main}:

\begin{corollary}\label{cor:complement}
Let $\sigma$ and $w$ be two Radon measures on $\R$. Then the global testing conditions \eqref{eq:HtestGlobal} hold, if and only if the local testing conditions \eqref{eq:HtestLocal} and the Muckenhoupt--Poisson $A_2$ conditions \eqref{eq:newA2} hold. Moreover, we have
\begin{equation*}
   \mathcal{H}_{\operatorname{glob}}+\mathcal{H}_{\operatorname{glob}}^*
   \eqsim \mathcal{H}+\mathcal{H}^*+[\sigma,w]_{A_2}^*+[w,\sigma]_{A_2}^*.
\end{equation*}
\end{corollary}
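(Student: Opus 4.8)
The plan is to prove Corollary~\ref{cor:complement} by establishing the two directions separately, with the `off' testing condition \eqref{eq:HtestOff} serving as the bridge in each case.

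For the forward direction, assume the global testing conditions \eqref{eq:HtestGlobal}. The local conditions \eqref{eq:HtestLocal} are weaker than \eqref{eq:HtestGlobal} (since $\Norm{1_I H(1_I\ud\sigma)}{L^2(w)}\leq\Norm{H(1_I\ud\sigma)}{L^2(w)}$), so $\mathcal{H}\leq\mathcal{H}_{\operatorname{glob}}$ and $\mathcal{H}^*\leq\mathcal{H}_{\operatorname{glob}}^*$ with no work. For the $A_2$ conditions \eqref{eq:newA2}, note that $\mathcal{H}_{\operatorname{off}}\leq\mathcal{H}_{\operatorname{glob}}$ (again since the norm over $I^c$ is at most the norm over all of $\R$), and by symmetry $\mathcal{H}_{\operatorname{off}}^*\leq\mathcal{H}_{\operatorname{glob}}^*$; then Lemma~\ref{lem:A2nec} gives $[\sigma,w]_{A_2}^*\leq 2\mathcal{H}_{\operatorname{off}}\leq 2\mathcal{H}_{\operatorname{glob}}$, and similarly $[w,\sigma]_{A_2}^*\leq 2\mathcal{H}_{\operatorname{glob}}^*$. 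Altogether $\mathcal{H}+\mathcal{H}^*+[\sigma,w]_{A_2}^*+[w,\sigma]_{A_2}^*\lesssim\mathcal{H}_{\operatorname{glob}}+\mathcal{H}_{\operatorname{glob}}^*$.

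For the reverse direction, assume \eqref{eq:HtestLocal} and \eqref{eq:newA2}. The point is to control the full-line expression $\Norm{H(1_I\ud\sigma)}{L^2(w)}$ by splitting the outer integration over $I$ and over $I^c$:
\begin{equation*}
  \Norm{H(1_I\ud\sigma)}{L^2(w)}\leq\Norm{1_I H(1_I\ud\sigma)}{L^2(w)}+\Norm{1_{I^c}H(1_I\ud\sigma)}{L^2(w)}.
\end{equation*}
The first term is at most $\mathcal{H}\sigma(I)^{1/2}$ by \eqref{eq:HtestLocal}. For the second term, dualize against $g\in L^2(w)$ supported in $I^c$: then $1_I\ud\sigma$ and $g\ud w$ are supported on complementary sets, so Theorem~\ref{thm:complement} (in the form of the remark following it, where $\spt f\subseteq I$ and $\spt g\subseteq I^c$, splitting $g$ into its left and right pieces relative to $I$) applies and gives $\abs{B(1_I,g)}\lesssim([\sigma,w]_{A_2}^*+[w,\sigma]_{A_2}^*)\sigma(I)^{1/2}\Norm{g}{L^2(w)}$. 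Taking the supremum over such $g$ yields $\Norm{1_{I^c}H(1_I\ud\sigma)}{L^2(w)}\lesssim([\sigma,w]_{A_2}^*+[w,\sigma]_{A_2}^*)\sigma(I)^{1/2}$, hence $\mathcal{H}_{\operatorname{glob}}\lesssim\mathcal{H}+[\sigma,w]_{A_2}^*+[w,\sigma]_{A_2}^*$. The bound $\mathcal{H}_{\operatorname{glob}}^*\lesssim\mathcal{H}^*+[\sigma,w]_{A_2}^*+[w,\sigma]_{A_2}^*$ follows by symmetry.

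There is no real obstacle here; the only mild subtlety is the bookkeeping at the endpoints of $I$, namely ensuring that $g$ restricted to $I^c$ is genuinely supported on the union of two half-lines with $I$ removed so that Theorem~\ref{thm:complement} is applicable — but this is exactly the situation covered by the remark immediately after that theorem, and the $A_2$ conditions \eqref{eq:newA2} are phrased with $I^c$, so no adjustment of the regions is needed. Combining the two directions gives the claimed comparison $\mathcal{H}_{\operatorname{glob}}+\mathcal{H}_{\operatorname{glob}}^*\eqsim\mathcal{H}+\mathcal{H}^*+[\sigma,w]_{A_2}^*+[w,\sigma]_{A_2}^*$.
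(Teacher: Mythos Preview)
Your proof is correct and follows essentially the same approach as the paper. The paper's version is simply terser: it records the immediate decomposition $\mathcal{H}_{\operatorname{glob}}\eqsim\mathcal{H}+\mathcal{H}_{\operatorname{off}}$ (and its starred analogue), then invokes Theorem~\ref{thm:complement} for the equivalence $\mathcal{H}_{\operatorname{off}}+\mathcal{H}_{\operatorname{off}}^*\eqsim[\sigma,w]_{A_2}^*+[w,\sigma]_{A_2}^*$, which is exactly your splitting into $I$ and $I^c$ pieces together with your dualization step spelled out.
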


\begin{proof}
It is immediate from the definitions that
\begin{equation*}
   \mathcal{H}_{\operatorname{glob}}\eqsim\mathcal{H}+\mathcal{H}_{\operatorname{off}},\qquad
   \mathcal{H}_{\operatorname{glob}}^*\eqsim\mathcal{H}^*+\mathcal{H}_{\operatorname{off}}^*.
\end{equation*}
The Corollary then follows from Theorem~\ref{thm:complement}.
\end{proof}

\section{The standard non-homogeneous $T(1)$ argument}

We now start the analysis of the main term arising from the initial reduction, the middle term on the right of \eqref{eq:mainToProve}. We drop the indices $\omega$, $i$, and ``good'', simply considering two functions $f$ and $g$ as follows:

\begin{assumption}\label{ass:fg}
The functions $f\in L^2(\sigma)$ and $g\in L^2(w)$ are supported on a common interval $I_0\in\mathscr{D}$, constant on its subintervals of length $2^{-n}\abs{I_0}$, of average zero over $I_0$, and such that $\Delta_I^\sigma f$ and $\Delta_I^w g$ are non-zero only for good intervals $I\in\mathscr{D}$. 
\end{assumption}

In particular, these functions have finite expansions
\begin{equation*}
  f=\sum_I \Delta_I^\sigma f,\qquad g=\sum_I \Delta_I^w g,
\end{equation*}
where the summations may be restricted to $I\subseteq I_0$, $\abs{I}>2^{-n}\abs{I_0}$, and good intervals only. The finiteness ensures that reorganizing these summations is never an issue.

We need to estimate the bilinear form
\begin{equation*}
  B(f,g)
  =\sum_{I,J} B(\Delta_I^\sigma f,\Delta_J^w g),
\end{equation*}
and we concentrate on the ``lower'' half
\begin{equation*}
  \sum_{I,J:\abs{I}\leq\abs{J}}=\sum_{I,J:I\subseteq J}+\sum_{I,J:I\subseteq 3J\setminus J}
  +\sum_{\substack{I,J:\abs{I}\leq\abs{J}\\ I\subseteq (3J)^c}},
\end{equation*}
as the ``upper'' half with $\abs{I}>\abs{J}$ is handled analogously. (In choosing to concentrate on $\abs{I}\leq\abs{J}$, we follow the convention of several papers in non-homogeneous analysis, notably \cite{NTV:Tb}, but deviate from Lacey et al.~\cite{LSSU:2wHilbert}, who explicitly consider the case $\abs{I}\geq\abs{J}$.)

The analysis in this section is ``standard'', but cannot be literally quoted from an earlier source, as we need to ensure that we only apply the $A_2$ condition in the form \eqref{eq:newA2}.

\subsection{Separated part}
We first treat the separated part
\begin{equation*}
\begin{split}
   \sum_{\substack{I,J:\abs{I}\leq\abs{J}\\ I\subseteq (3J)^c}}
   &=\sum_{i=0}^\infty\sum_{k=1}^\infty \sum_{2^k\leq\abs{m}< 2^{k+1}}\sum_J\sum_{I:I^{(i)}=J\dot+m} \\
   &=\sum_{i=0}^\infty\sum_{k=1}^\infty\sum_K \sum_{J:J^{(k)}=K}\sum_{2^k\leq\abs{m}< 2^{k+1}}\sum_{I:I^{(i)}=J\dot+m}.
\end{split}
\end{equation*}
Here it is worth observing that $J\dot+ m\subseteq 5K\setminus K$.

\begin{lemma}\label{lem:standardKernel}
For disjointly supported $f\in L^1(\sigma)$ with $\int f\ud\sigma=0$, and $g\in L^1(w)$,  we have
\begin{equation*}
  \abs{B(f,g)}\lesssim\frac{\operatorname{diam}(\spt f)}{\dist(\spt f,\spt g)^2}\Norm{f}{L^1(\sigma)}\Norm{g}{L^1(w)}.
\end{equation*}
\end{lemma}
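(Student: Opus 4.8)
The plan is to use the standard kernel estimate for Calder\'on--Zygmund operators, exploiting the cancellation $\int f\,\ud\sigma=0$ to gain the extra factor of $\operatorname{diam}(\spt f)$. Write $B(f,g)=\iint\frac{1}{x-y}f(y)g(x)\,\ud\sigma(y)\,\ud w(x)$, which is legitimate since $f$ and $g$ are disjointly supported (using the postulate \eqref{eq:Bdisjoint} in the form valid for disjoint, not merely separated, supports). Let $y_0$ be any fixed point of $\spt f$, say its left endpoint, or more conveniently a point about which the kernel is to be expanded. Using $\int f\,\ud\sigma=0$, subtract off the value of the kernel at $y_0$:
\begin{equation*}
  B(f,g)=\iint\Big(\frac{1}{x-y}-\frac{1}{x-y_0}\Big)f(y)g(x)\,\ud\sigma(y)\,\ud w(x).
\end{equation*}

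The next step is the pointwise kernel bound. For $y\in\spt f$ and $x\in\spt g$ one has
\begin{equation*}
  \Babs{\frac{1}{x-y}-\frac{1}{x-y_0}}=\frac{\abs{y-y_0}}{\abs{x-y}\,\abs{x-y_0}}\leq\frac{\operatorname{diam}(\spt f)}{\abs{x-y}\,\abs{x-y_0}}.
\end{equation*}
Since $y,y_0\in\spt f$ and $x\in\spt g$ with $\dist(\spt f,\spt g)>0$, both $\abs{x-y}$ and $\abs{x-y_0}$ are bounded below by $\dist(\spt f,\spt g)$; hence the kernel difference is at most $\operatorname{diam}(\spt f)/\dist(\spt f,\spt g)^2$. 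Substituting this bound and pulling out the constant,
\begin{equation*}
  \abs{B(f,g)}\leq\frac{\operatorname{diam}(\spt f)}{\dist(\spt f,\spt g)^2}\iint\abs{f(y)}\,\abs{g(x)}\,\ud\sigma(y)\,\ud w(x)
  =\frac{\operatorname{diam}(\spt f)}{\dist(\spt f,\spt g)^2}\Norm{f}{L^1(\sigma)}\Norm{g}{L^1(w)},
\end{equation*}
which is the claim.

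I do not expect any serious obstacle here; this is the textbook Calder\'on--Zygmund smoothness estimate, and the only point requiring a little care is the justification of the integral representation of $B(f,g)$ for disjointly supported $f,g$, which has already been arranged in the axiomatic set-up (Remark~\ref{rem:disjVsSep}) and does not need to be revisited. One should also note that the absolute integrability needed to write $B(f,g)$ as the stated double integral, and to move the absolute values inside, follows from $f\in L^1(\sigma)$, $g\in L^1(w)$ together with the uniform bound on the kernel over $\spt f\times\spt g$.
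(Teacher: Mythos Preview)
Your proof is correct and is precisely the standard argument the paper has in mind; the paper's own proof consists of the single sentence ``This is completely standard, using the bound for the derivative of the Hilbert kernel,'' and you have simply written out that computation in full.
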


\begin{proof}
This is completely standard, using the bound for the derivative of the Hilbert kernel.
\end{proof}

Then, by the additivity of the $L^1$ norm on disjoint functions, we deduce that
\begin{equation*}
\begin{split}
   \sum_{J:J^{(k)}=K} &\sum_{2^k\leq\abs{m}< 2^{k+1}}\sum_{I:I^{(i)}=J\dot+m}\abs{B(\Delta_I^\sigma f,\Delta_J^w g)} \\
  &\lesssim 2^{-i-k}\frac{1}{\abs{K}}
      \sum_{J:J^{(k)}=K}\Norm{\Delta_J^w g}{L^1(w)}\sum_{m:1\leq\abs{m}\leq 2}\sum_{I:I^{(i+k)}=K\dot+m}\Norm{\Delta_I^\sigma f}{L^1(\sigma)} \\
  &=2^{-i-k}\frac{1}{\abs{K}}\Norm{\Delta_K^{w,k} g}{L^1(w)}\sum_{m:1\leq\abs{m}\leq 2}\Norm{\Delta_{K\dot+m}^{\sigma,i+k}f}{L^1(\sigma)},
\end{split}
\end{equation*}
where
\begin{equation*}
  \Delta_K^{w,k}g:=\sum_{J:J^{(k)}=K}\Delta_J^w g
\end{equation*}
and $\Delta_{K\dot+m}^{\sigma,i+k}f$ is defined analogously.

Next, we use Cauchy--Schwarz to estimate
\begin{equation*}
  \Norm{\Delta_K^{w,k} g}{L^1(w)}\Norm{\Delta_{K\dot+m}^{\sigma,i+k}f}{L^1(\sigma)}
  \leq w(K)^{1/2}\sigma(K\dot+m)^{1/2}\Norm{\Delta_K^{w,k} g}{L^2(w)}\Norm{\Delta_{K\dot+m}^{\sigma,i+k}f}{L^2(\sigma)},
\end{equation*}
and then
\begin{equation*}
  w(K)^{1/2}\sigma(K\dot+m)^{1/2}\lesssim[\sigma,w]_{A_2}\abs{K},\qquad\abs{m}=1,2.
\end{equation*}
So altogether we have checked that
\begin{equation*}
\begin{split}
  \sum_{\substack{I,J:\abs{I}\leq\abs{J}\\ I\subseteq(3J)^c}} &\abs{B(\Delta_I^\sigma f,\Delta_J^w g)} \\
  &\lesssim[\sigma,w]_{A_2}\sum_{i=0}^\infty\sum_{k=1}^\infty 2^{-i-k}\sum_{m:1\leq\abs{m}\leq 2}\sum_K
  \Norm{\Delta_K^{w,k}g}{L^2(w)}\Norm{\Delta_{K\dot+m}^{\sigma,i+k}f}{L^2(\sigma)}.
\end{split}
\end{equation*}
The inner sum is bounded by
\begin{equation*}
  \Big(\sum_K \Norm{\Delta_K^{w,k}g}{L^2(w)}^2\Big)^{1/2}
  \Big(\sum_K \Norm{\Delta_{K\dot+m}^{\sigma,i+k}f}{L^2(\sigma)}^2\Big)^{1/2}
  \leq\Norm{g}{L^2(w)}\Norm{f}{L^2(\sigma)}
\end{equation*}
by orthogonality, and it remains to sum up the geometric series in $i$ and $k$ and the finite sum over $\abs{m}=1,2$ to conclude that
\begin{equation*}
   \sum_{\substack{I,J:\abs{I}\leq\abs{J}\\ I\subseteq(3J)^c}} \abs{B(\Delta_I^\sigma f,\Delta_J^w g)}
   \lesssim[\sigma,w]_{A_2}\Norm{f}{L^2(\sigma)}\Norm{g}{L^2(w)}.
\end{equation*}

\subsection{Adjacent part}
We turn our attention to
\begin{equation}\label{eq:adjacentPart}
\begin{split}
  \sum_{I,J:I\subseteq 3J\setminus J}B(\Delta_I^\sigma f,\Delta_J^w g)
  &=\sum_{m=\pm 1}\sum_{i=0}^\infty\sum_{I:I^{(i)}=J\dot+m}B(\Delta_I^\sigma f,\Delta_J^w g) \\
  &=\sum_{m=\pm 1}\sum_{i=0}^\infty B(\Delta_{J\dot+m}^{\sigma,i}f,\Delta_J^w g).
\end{split}
\end{equation}

Before proceeding, we record a useful lemma:

\begin{lemma}\label{lem:pairWithDelta}
For all $f,g\in\mathscr{F}$ and $J\in\mathscr{D}$, we have
\begin{equation*}
   \abs{B(f,\Delta_J^w g)}
   \lesssim ([\sigma,w]_{A_2}^*+[w,\sigma]_{A_2}^*+C)\Norm{f}{L^2(\sigma)}\Norm{\Delta_J^w g}{L^2(w)},
\end{equation*}
where
\begin{equation*}
  C=\begin{cases} 0, & \text{if }\spt f\subseteq J^c, \\ \mathcal{H}^*, & \text{else}.\end{cases}
\end{equation*}
\end{lemma}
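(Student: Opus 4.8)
The plan is to decompose $f$ according to its position relative to $J$, and then apply the right tool from the preceding machinery to each piece. Write $f = 1_{J^c}f + 1_J f$. The first piece is supported on $J^c$ while $\Delta_J^w g$ is supported on $J$, so these are functions on complementary sets (after splitting $1_{J^c}f$ into its left and right parts, as noted after Theorem~\ref{thm:complement}); hence Theorem~\ref{thm:complement} gives
\[
  \abs{B(1_{J^c}f,\Delta_J^w g)}
  \lesssim([\sigma,w]_{A_2}^*+[w,\sigma]_{A_2}^*)\Norm{1_{J^c}f}{L^2(\sigma)}\Norm{\Delta_J^w g}{L^2(w)}.
\]
This already settles the case $\spt f\subseteq J^c$, where the second piece is absent and $C=0$, and it contributes the $[\sigma,w]_{A_2}^*+[w,\sigma]_{A_2}^*$ part in general.

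For the remaining piece $1_J f$, I would use the local testing condition. Expanding $\Delta_J^w g = \sum_{v}\ave{\Delta_J^w g}_{J_v}^w 1_{J_v} - \ave{\Delta_J^w g}_J^w 1_J$ (a combination of indicators of $J$ and its children, all contained in $J$), we get $B(1_J f,\Delta_J^w g)$ as a finite linear combination of terms $B(1_J f, 1_K)$ with $K\in\{J,J_{\mathrm{left}},J_{\mathrm{right}}\}\subseteq J$. Each such term is of the form $B(1_J f', 1_K)$ with $K\subseteq J$; writing $1_J f' = 1_K(1_J f') + 1_{J\setminus K}(1_J f')$ and noting that $1_{J\setminus K}(1_J f')$ and $1_K$ are disjointly (indeed, for the children case, the bilinear form on the two disjoint children of $J$ is covered by Lemma~\ref{lem:Isub3JminusJ}), one reduces to: (i) a genuinely local term $B(1_K h, 1_K)$ bounded by $\mathcal{H}^* \Norm{h}{L^2(\sigma)} w(K)^{1/2}$ via \eqref{eq:BtestLocal}, and (ii) disjoint-support terms bounded by $[\sigma,w]_{A_2}$-type estimates from Lemmas~\ref{lem:Isub3JminusJ}. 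Collecting the coefficients $\abs{\ave{\Delta_J^w g}_{J_v}^w}$ and using $\sum_v \abs{\ave{\Delta_J^w g}_{J_v}^w}^2 w(J_v) \eqsim \Norm{\Delta_J^w g}{L^2(w)}^2$ together with $w(J_v)\le w(J)$, this yields
\[
  \abs{B(1_J f,\Delta_J^w g)}
  \lesssim (\mathcal{H}^*+[\sigma,w]_{A_2}^*)\Norm{1_J f}{L^2(\sigma)}\Norm{\Delta_J^w g}{L^2(w)}.
\]
Adding the two pieces and using $\Norm{1_{J^c}f}{L^2(\sigma)},\Norm{1_J f}{L^2(\sigma)}\le\Norm{f}{L^2(\sigma)}$ gives the claim, with the constant $C$ equal to $0$ when $\spt f\subseteq J^c$ (only the first piece survives) and $\mathcal{H}^*$ otherwise. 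Strictly speaking one should first reduce to $f\in\mathscr{F}$ piecewise constant on a fine enough scale so that $1_J f,1_{J^c}f\in\mathscr{F}$ as well, which is automatic in the running set-up.

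The only mildly delicate point — the "main obstacle," such as it is — is bookkeeping the algebraic expansion of $\Delta_J^w g$ against $1_J f$ so that every resulting pairing is either a legitimate instance of the local testing condition or a disjoint-support pairing controlled by the positive-kernel Lemmas~\ref{lem:IsubJ}, \ref{lem:Isub3JminusJ}; once that is arranged, all norm estimates are routine Cauchy--Schwarz applications exploiting the orthogonality (really just the explicit two-term structure) of a single martingale difference. Everything else is a direct appeal to Theorem~\ref{thm:complement} and the already-established weak boundedness lemmas.
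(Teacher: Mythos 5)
Your proof is correct and follows essentially the same route as the paper: expand $\Delta_J^w g$ into the child indicators $1_{J_u}$, then split $f$ by support so that the on-support piece $B(1_{J_u}f,1_{J_u})$ is handled by local testing $\mathcal{H}^*$ and the off-support piece by Theorem~\ref{thm:complement} (equivalently $\mathcal{H}_{\operatorname{off}}^*\lesssim[\sigma,w]_{A_2}^*+[w,\sigma]_{A_2}^*$). The only cosmetic difference is that you split $f$ in two stages ($J$ vs.\ $J^c$, then $J_u$ vs.\ $J_{u'}$), whereas the paper splits once, directly as $f=1_{J_u^c}f+1_{J_u}f$, which merges your two off-support contributions into a single application of $\mathcal{H}_{\operatorname{off}}^*$.
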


\begin{proof}
Writing $\Delta_J^w g=\sum_{u\in\{\operatorname{left},\operatorname{right}\}}\ave{\Delta_J^w g}_{J_u}^w 1_{J_u}$, it suffices to prove the estimate with $1_{J_u}$ in place of $\Delta_J^w g$. We also split $f=1_{J_u^c}f+1_{J_u} f$. Then
\begin{equation*}
  \abs{B(1_{J_u^c}f,1_{J_u})}
  \leq\mathcal{H}_{\operatorname{off}}^*\Norm{f}{L^2(\sigma)}w(J_u)^{1/2}
  \lesssim([\sigma,w]_{A_2}^*+[w,\sigma]_{A_2}^*)\Norm{f}{L^2(\sigma)}w(J_u)^{1/2}
\end{equation*}
by definition \eqref{eq:HtestOff} and Theorem~\ref{thm:complement}, and
\begin{equation*}
  \abs{B(1_{J_u}f,1_{J_u})}\leq\mathcal{H}^*\Norm{f}{L^2(\sigma)}w(J_u)^{1/2}
\end{equation*}
by definition. Of course this second term does not appear if $\spt f\subseteq J^c$.
\end{proof}

We return to the analysis of \eqref{eq:adjacentPart}.
For any fixed $i$, using the disjointness of the sets $J$ and $J\dot+m$, Lemma~\ref{lem:pairWithDelta} gives the following immediate estimate:
\begin{equation*}
    \abs{B(\Delta_{J\dot+m}^{\sigma,i}f,\Delta_J^w g)}
    \lesssim ([\sigma,w]_{A_2}^{*}+[w,\sigma]_{A_2}^*)\Norm{\Delta_{J\dot+m}^{\sigma,i}f}{L^2(\sigma)}\Norm{\Delta_J^w g}{L^2(w)},
\end{equation*}
and this can be summed over $J$ by Cauchy--Schwarz and orthogonality to deduce that
\begin{equation*}
   \sum_J\abs{B(\Delta_{J\dot+m}^{\sigma,i}f,\Delta_J^w g)}
   \lesssim([\sigma,w]_{A_2}^{*}+[w,\sigma]_{A_2}^*)\Norm{f}{L^2(\sigma)}\Norm{g}{L^2(w)}.
\end{equation*}
This is not enough to sum up the infinite series in $i$, but we can use this for the $r+1$ first terms, so that we are reduced to estimating
\begin{equation*}
  \sum_{m=\pm 1}\sum_{i>r}^\infty\sum_{I:I^{(i)}=J\dot+m}B(\Delta_I^\sigma f,\Delta_J^w g).
\end{equation*}
Here the goodness of $f$ becomes available to us, so we know that $\dist(I,J)\geq\abs{I}^\gamma\abs{J}^{1-\gamma}$ for the relevant nonzero terms. Combined with Lemma~\ref{lem:standardKernel}, this gives
\begin{equation*}
  \abs{B(\Delta_I^\sigma f,\Delta_J^w g)}
  \lesssim\frac{\abs{I}}{\abs{I}^{2\gamma}\abs{J}^{2(1-\gamma)}}\Norm{\Delta_I^\sigma f}{L^1(\sigma)}\Norm{\Delta_J^w g}{L^1(w)},
\end{equation*}
and then
\begin{equation*}
\begin{split}
  \sum_{I:I^{(i)}=J\dot+m} &\abs{B(\Delta_I^\sigma f,\Delta_J^w g)}
  \lesssim 2^{-i(1-2\gamma)}\frac{1}{\abs{J}}\Norm{\Delta_{J\dot+m}^{\sigma,i} f}{L^1(\sigma)}\Norm{\Delta_J^w g}{L^1(w)} \\
  &\lesssim 2^{-i(1-2\gamma)}\frac{1}{\abs{J}}\sigma(J\dot+m)^{1/2}w(J)^{1/2}
    \Norm{\Delta_{J\dot+m}^{\sigma,i} f}{L^2(\sigma)}\Norm{\Delta_J^w g}{L^2(w)} \\
    &\lesssim 2^{-i(1-2\gamma)}[\sigma,w]_{A_2}
    \Norm{\Delta_{J\dot+m}^{\sigma,i} f}{L^2(\sigma)}\Norm{\Delta_J^w g}{L^2(w)}.
\end{split}
\end{equation*}
This can be summed over $J$ as before, and over $i$ by geometric convergence.

\subsection{Nested part}
We are left with
\begin{equation*}
  \sum_{I,J:I\subseteq J}B(\Delta_I^\sigma j,\Delta_J^w g)
  =\sum_{i=0}^\infty\sum_J\sum_{I:I^{(i)}=J}B(\Delta_I^\sigma f,\Delta_J^w g)
  =\sum_{i=0}^\infty\sum_J B(\Delta_J^{\sigma,i}f,\Delta_J^w g).
\end{equation*}
Again, we will check that any fixed $i$ is under control. From Lemma~\ref{lem:pairWithDelta}, we have
\begin{equation*}
  \abs{B(\Delta_J^{\sigma,i}f,\Delta_J^w g)}
  \lesssim(\mathcal{H}^*+[\sigma,w]_{A_2}^{*}+[w,\sigma]_{A_2}^*)\Norm{\Delta_J^{\sigma,i}f}{L^2(\sigma)}\Norm{\Delta_J^w g}{L^2(w)}.
\end{equation*}
By the usual Cauchy--Schwarz and orthogonality in the sum over $J$, we find that
\begin{equation*}
  \sum_J\abs{B(\Delta_J^{\sigma,i}f,\Delta_J^w g)}
  \lesssim(\mathcal{H}^*+[\sigma,w]_{A_2}^{*}+[w,\sigma]_{A_2}^*)\Norm{f}{L^2(\sigma)}\Norm{g}{L^2(w)}.
\end{equation*}
We apply this to $i=0,1,\ldots,r$, so that we are left with
\begin{equation*}
  \sum_{i>r}\sum_J\sum_{I:I^{(i)}=J}B(\Delta_I^\sigma f,\Delta_J^w g)
  =\sum_{j\geq r}\sum_K\sum_{I:I^{(j)}=K}B(\Delta_I^\sigma f,\Delta_{K^{(1)}}^w g),
\end{equation*}
where we chose the child $K$ of $J$ that contains $I$ as the new summation variable.

There is once again the disjoint part, where Lemma~\ref{lem:standardKernel} applies:
\begin{equation*}
   \abs{B(\Delta_I^\sigma f, 1_{K^c}\Delta_{K^{(1)}}^w g)}
   \lesssim\frac{\abs{I}}{\dist(I,K^c)^2}\Norm{\Delta_I^\sigma f}{L^1(\sigma)}\Norm{1_{K^c}\Delta_{K^{(1)}}^w g}{L^1(w)}.
\end{equation*}
Thus
\begin{equation*}
\begin{split}
  \sum_{I:I^{(j)}=K} &\abs{B(\Delta_I^\sigma f, 1_{K^c}\Delta_{K^{(1)}}^w g)}
  \lesssim 2^{-j(1-2\gamma)}\frac{1}{\abs{K}}\Norm{\Delta_K^{\sigma,j}f}{L^1(\sigma)}\Norm{1_{K^c}\Delta_{K^{(1)}}^w g}{L^1(w)} \\
  &\leq 2^{-j(1-2\gamma)}\frac{1}{\abs{K}}\sigma(K)^{1/2}w(K^{(1)}\setminus K)^{1/2}
     \Norm{\Delta_K^{\sigma,j}f}{L^2(\sigma)}\Norm{1_{K^c}\Delta_{K^{(1)}}^w g}{L^2(w)} \\
  &\leq 2^{-j(1-2\gamma)}[\sigma,w]_{A_2}
     \Norm{\Delta_K^{\sigma,j}f}{L^2(\sigma)}\Norm{1_{K^c}\Delta_{K^{(1)}}^w g}{L^2(w)},
\end{split}
\end{equation*}
and this is summed over $K$ and $j$ as before.

We are only left with
\begin{equation*}
  \sum_{j\geq r}\sum_K\sum_{I:I^{(j)}=K}B(\Delta_I^\sigma f, 1_K)\ave{\Delta_{K^{(1)}}^w g}_K^w
  =\sum_J\sum_{I:I\Subset J}B(\Delta_I^\sigma f, 1_{J_I})\ave{\Delta_{J}^w g}_{J_I}^w,
\end{equation*}
where we shifted back to the summation variable $J=K^{(1)}$, and $J_I=K$ denotes the half of $J$ that contains $I$; the notation $I\Subset J$ is abbreviation for $I\subset J$ and $\abs{I}<2^{-r}\abs{J}$.

Denoting the right hand side in the previous display by
\begin{equation}\label{eq:Bbelow}
   B_{\operatorname{below}}(f,g) := \sum_J\sum_{I:I\Subset J}B(\Delta_I^\sigma f, 1_{J_I})\ave{\Delta_{J}^w g}_{J_I}^w,
\end{equation}
and analogously
\begin{equation}\label{eq:Babove}
   B_{\operatorname{above}}(f,g) := \sum_I\sum_{J:J\Subset I}\ave{\Delta_{I}^\sigma f}_{I_J}^\sigma B(1_{I_J},\Delta_J^w g),
\end{equation}
the results of this section, and the symmetric considerations for $\abs{I}>\abs{J}$, may be summarized as follows:

\begin{proposition}
For functions $f$ and $g$ as in Assumption~\ref{ass:fg}, we have
\begin{equation*}
\begin{split}
  &\abs{B(f,g)-B_{\operatorname{above}}(f,g)-B_{\operatorname{below}}(f,g)} \\
  &\qquad\lesssim([\sigma,w]_{A_2}^*+[w,\sigma]_{A_2}^*+\mathcal{H}+\mathcal{H}^*)\Norm{f}{L^2(\sigma)}\Norm{g}{L^2(w)}.
\end{split}
\end{equation*}
\end{proposition}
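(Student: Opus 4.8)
The plan is to run the standard martingale decomposition of $B(f,g)$ into paraproduct-type and non-paraproduct pieces, and to show that everything except the two ``below/above'' paraproducts $B_{\operatorname{below}}$ and $B_{\operatorname{above}}$ can be controlled by the quoted constants. Concretely, I split
\begin{equation*}
  B(f,g)=\sum_{I,J}B(\Delta_I^\sigma f,\Delta_J^w g)
  =\sum_{\abs{I}\leq\abs{J}}B(\Delta_I^\sigma f,\Delta_J^w g)+\sum_{\abs{I}>\abs{J}}B(\Delta_I^\sigma f,\Delta_J^w g),
\end{equation*}
and treat the first sum (the second being symmetric). Within it, I split according to the relative position of $I$ and $J$ into the separated part ($I\subseteq(3J)^c$), the adjacent part ($I\subseteq 3J\setminus J$), and the nested part ($I\subseteq J$). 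The separated and adjacent parts are shown directly (using Lemma~\ref{lem:standardKernel}, Lemma~\ref{lem:pairWithDelta}, the $A_2$ bounds via Theorem~\ref{thm:complement}, goodness of $f$ beyond generation $r$, Cauchy--Schwarz and orthogonality of martingale differences) to be $\lesssim([\sigma,w]_{A_2}^*+[w,\sigma]_{A_2}^*+\mathcal{H}^*)\Norm{f}{L^2(\sigma)}\Norm{g}{L^2(w)}$.

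First I would handle the separated part: reindex the sum over pairs $(I,J)$ with $I\subseteq(3J)^c$ and $\abs{I}\leq\abs{J}$ by the common ancestor $K$ and the generation gaps $i,k$, note $J\dotplus m\subseteq 5K\setminus K$, apply Lemma~\ref{lem:standardKernel} to each disjointly supported pair $(\Delta_I^\sigma f,\Delta_J^w g)$, collapse the inner sums over $I$ and $J$ into $\Delta^{\sigma,i+k}_{K\dotplus m}f$ and $\Delta^{w,k}_K g$, estimate $w(K)^{1/2}\sigma(K\dotplus m)^{1/2}\lesssim[\sigma,w]_{A_2}\abs{K}$, and finish by Cauchy--Schwarz/orthogonality in $K$ plus a geometric series in $i,k$. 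Next the adjacent part: write it as $\sum_{m=\pm1}\sum_{i\geq0}B(\Delta^{\sigma,i}_{J\dotplus m}f,\Delta_J^w g)$; for $i\leq r$ use Lemma~\ref{lem:pairWithDelta} (with $C=0$, since $\Delta^{\sigma,i}_{J\dotplus m}f$ is supported in $(J)^c$) term by term and sum over $J$; for $i>r$ invoke goodness to get $\dist(I,J)\geq\abs{I}^\gamma\abs{J}^{1-\gamma}$, apply Lemma~\ref{lem:standardKernel} with this separation, bound $\sigma(J\dotplus m)^{1/2}w(J)^{1/2}\lesssim[\sigma,w]_{A_2}\abs{J}$, and sum. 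The nested part splits as $\sum_{i\geq0}\sum_J B(\Delta^{\sigma,i}_J f,\Delta_J^w g)$; for $i\leq r$ use Lemma~\ref{lem:pairWithDelta} (now $C=\mathcal{H}^*$); for $i>r$, reindex by the child $K$ of $J$ containing $I$, split $\Delta^w_{K^{(1)}}g=1_{K^c}\Delta^w_{K^{(1)}}g+\ave{\Delta^w_{K^{(1)}}g}^w_K 1_K$, handle the $1_{K^c}$ piece by Lemma~\ref{lem:standardKernel} and $A_2$ exactly as above, and collect the remaining $1_K$ piece into the paraproduct $B_{\operatorname{below}}(f,g)$ defined in~\eqref{eq:Bbelow}.

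After doing the same for the upper half $\abs{I}>\abs{J}$ (which produces $B_{\operatorname{above}}(f,g)$ of~\eqref{eq:Babove}), adding all the estimates yields the claimed bound on $\abs{B(f,g)-B_{\operatorname{above}}(f,g)-B_{\operatorname{below}}(f,g)}$. The main obstacle is bookkeeping rather than conceptual: one must make sure that every time the $A_2$ condition is invoked, it is applied to measures integrated over \emph{complementary} intervals, i.e.\ only through the constants $[\sigma,w]_{A_2}^*$, $[w,\sigma]_{A_2}^*$ (equivalently, via Lemma~\ref{lem:Isub3JminusJ}--\ref{lem:Isub3Jc} and Theorem~\ref{thm:complement}), and never through a naive $\sigma(I)w(I)/\abs{I}^2$ bound that would fail in the presence of common point masses; the reindexing by common ancestors and the careful use of adjacency of $I$ and $J\dotplus m$ are precisely what keeps all such estimates within the complementary-interval framework.

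\begin{proof}
All the estimates were carried out in the course of this section. Collecting the separated, adjacent, and nested contributions to $\sum_{\abs{I}\leq\abs{J}}B(\Delta_I^\sigma f,\Delta_J^w g)$, together with the symmetric treatment of $\sum_{\abs{I}>\abs{J}}B(\Delta_I^\sigma f,\Delta_J^w g)$, and isolating the paraproduct terms $B_{\operatorname{below}}(f,g)$ and $B_{\operatorname{above}}(f,g)$ as in \eqref{eq:Bbelow} and \eqref{eq:Babove}, yields the stated bound.
\end{proof}
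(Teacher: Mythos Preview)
Your proposal is correct and follows essentially the same approach as the paper: the same separated/adjacent/nested trichotomy for the lower half $\abs{I}\leq\abs{J}$, the same reindexing by common ancestors and generation gaps, the same use of Lemma~\ref{lem:standardKernel} and Lemma~\ref{lem:pairWithDelta}, the same split at $i=r$ to invoke goodness, and the same extraction of $B_{\operatorname{below}}$ from the nested piece (with the symmetric treatment producing $B_{\operatorname{above}}$). Your emphasis on keeping all $A_2$ applications in the complementary-interval form is exactly the point the paper makes in introducing this section.
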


We are thus left with estimating the new bilinear forms \eqref{eq:Bbelow} and \eqref{eq:Babove}, and by symmetry we concentrate on \eqref{eq:Bbelow} only.

\section{Stopping construction; reduction to local and tail forms}

We continue where we left above, the analysis of the form \eqref{eq:Bbelow}. Our next task is to rearrange this sum with the help of suitable \emph{stopping intervals}. However, we only specify the actual stopping condition later on, and proceed for the moment with a generic assumption:

\begin{assumption}[Stopping intervals]
We assume that $\mathscr{S}\subseteq\mathscr{D}$ is a subcollection containing a maximal interval $S_0$, where $S_0:=I_0$ is the initial interval that supports both $f$ and $g$. We also assume that $\mathscr{S}$ satisfies the Carleson condition
\begin{equation}\label{eq:Carleson}
  \sum_{\substack{S\in\mathscr{S}\\ S\subseteq J}}w(S)\lesssim w(J),
\end{equation}
where the implied constant is absolute.
\end{assumption}

Note that we introduce the alternative notation $S_0:=I_0$ simply to emphasize the role of the initial interval $I_0$ as a stopping interval.

\begin{remark}
By the well-known dyadic Carleson embedding theorem, condition~\eqref{eq:Carleson} implies that
\begin{equation}\label{eq:StoSatisfy1}
  \sum_{S\in\mathscr{S}}w(S)\abs{\ave{g}_S^w}^2\lesssim\Norm{g}{L^2(w)}^2
\end{equation}
for all $g\in L^2(w)$.
\end{remark}

 For every dyadic interval $I\subseteq I_0$, we define the \emph{stopping parent}
\begin{equation*}
  \pi I:=\pi_{\mathscr{S}}I:=\min\{S\in\mathscr{S}:S\supseteq I\}.
\end{equation*}
Note that the minimum of a discrete sequence of nested sets above could also be written as the intersection. We shall use a similar notation for some other systems of cubes, but reserve the shorthand $\pi=\pi_{\mathscr{S}}$ for our ``standard'' stopping cubes.

We also consider the projections
\begin{equation*}
  P^\sigma_S f:=\sum_{I:\pi I=S}\Delta_I^\sigma f,\qquad
  \tilde P^\sigma_{S}f:=\sum_{I:\pi I^{(r)}=S}\Delta_I^\sigma f,\qquad
  P^w_S g:=\sum_{J:\pi J=S}\Delta_J^w g,
\end{equation*}
so that
\begin{equation*}
  f=\sum_{S\in\mathscr{S}}\tilde P_{S}^\sigma f+\sum_{\substack{I:I\subseteq S_0\\ I\not\Subset S_0}}\Delta_I^\sigma f,\qquad
  g=\sum_{S\in\mathscr{S}}P_S^w g.
\end{equation*}
For the ``high'' intervals $I\not\Subset S_0$, there is no $J\subseteq S_0$ with $I\Subset J$, and hence
\begin{equation}\label{eq:splitBbelow}
\begin{split}
  B_{\operatorname{below}}(f,g)
  &=\sum_{S,S'\in\mathscr{S}}B_{\operatorname{below}}(\tilde P^\sigma_{S}f,P^w_{S'} g) \\
  &=\sum_S B_{\operatorname{below}}(\tilde P^\sigma_{S} f,P^w_S g)
    +\sum_{S,S':S'\supsetneq S}B_{\operatorname{below}}(\tilde P^\sigma_{S} f,P^w_{S'} g),
\end{split}
\end{equation}
where we also observed  that $B_{\operatorname{below}}(\tilde P^\sigma_{S} f,P^w_{S'} g)=0$ if $S\cap S'=\varnothing$ or $S\supsetneq S'$.
The first term on the right of \eqref{eq:splitBbelow} is the \emph{local form}
\begin{equation}\label{eq:Blocal}
  B_{\operatorname{local}}(f,g):=\sum_S B_{\operatorname{below}}(\tilde P^\sigma_{S} f,P^w_S g),
\end{equation}
which we postpone for later treatment.

We now consider the second part on the right of \eqref{eq:splitBbelow}. This can be written as
\begin{equation}\label{eq:BbelowUnequal}
\begin{split}
  \sum_{S,S':S'\supsetneq S}B_{\operatorname{below}}(\tilde P^\sigma_{S} f,P^w_{S'} g)
  =\sum_{S\in\mathscr{S}}\sum_{J:J\supsetneq S}B(\tilde P_S^\sigma f, 1_{J_S})\ave{\Delta_J^w g}_{J_S}^w.
\end{split}
\end{equation}
We split $1_{J_S}=1_{J_S\setminus S}+1_S$ and control the latter part first. With $1_S$ in place of $1_{J_S}$ above, only the last factor depends on $J$, and we have
\begin{equation*}
  \sum_{J:J\supsetneq S}\ave{\Delta_J^w g}_{J_S}^w=\ave{g}_S^w.
\end{equation*}
We thus need to estimate
\begin{equation*}
\begin{split}
  \Babs{\sum_{S\in\mathscr{S}}B(\tilde P_S^\sigma f, 1_S)\ave{g}_S^w}
  &\leq\sum_{S\in\mathscr{S}}\mathcal{H}^*\Norm{\tilde P_S^{\sigma}f}{L^2(\sigma)}w(S)^{1/2}\abs{\ave{g}_S^w} \\
  &\leq\mathcal{H}^*\Big(\sum_{S\in\mathscr{S}}\Norm{\tilde P_S^\sigma f}{L^2(\sigma)}^2\Big)^{1/2}
     \Big(\sum_{S\in\mathscr{S}}w(S)\abs{\ave{g}_S^w}^2\Big)^{1/2}.
\end{split}
\end{equation*}
The factor involving $f$ is bounded by $\Norm{f}{L^2(\sigma)}$ by orthogonality, and the factor involving $g$ by $\Norm{g}{L^2(w)}$ by the Carleson embedding theorem as recorded in \eqref{eq:StoSatisfy1}. We thus deduce that
\begin{equation*}
   \Babs{ \sum_{S\in\mathscr{S}}\sum_{J:J\supsetneq S}B(\tilde P_S^\sigma f, 1_{S})\ave{\Delta_J^w g}_{J_S}^w}
   \lesssim\mathcal{H}^*\Norm{f}{L^2(\sigma)}\Norm{g}{L^2(w)}.
\end{equation*}

The remaining part of \eqref{eq:BbelowUnequal}, with $1_{J_S\setminus S}$ in place of $1_{J_S}$, is the \emph{tail form}, which we treat in the following section.
For reference, we record it here as
\begin{equation}\label{eq:Bbelow4Poisson}
  B_{\operatorname{tail}}(f,g):=\sum_{S\in\mathscr{S}}\sum_{J:J\supsetneq S}B(\tilde P_S^\sigma f, 1_{J_S\setminus S})\ave{\Delta_J^w g}_{J_S}^w
  =\sum_{S\in\mathscr{S}}B(\tilde P_S^\sigma f, \Phi_S^w g),
\end{equation}
where
\begin{equation}\label{eq:PhiS}
  \Phi_S^w g:=\sum_{J:J\supsetneq S}1_{J\setminus J_S}\ave{g}_{J}^w.
\end{equation}
Note that
\begin{equation*}
  \abs{\Phi_S^w g}\leq 1_{S^c}M_w g,\qquad M_w g:=\sup_{J\in\mathscr{D}}1_J\ave{\abs{g}}_J^w.
\end{equation*}

We summarize the considerations of this section as:

\begin{proposition}
For the bilinear forms defined above, we have
\begin{equation*}
  \Babs{B_{\operatorname{below}}(f,g)
     -B_{\operatorname{local}}(f,g)- B_{\operatorname{tail}}(f,g)}
     \lesssim\mathcal{H}^*\Norm{f}{L^2(\sigma)}\Norm{g}{L^2(w)}.
\end{equation*}
\end{proposition}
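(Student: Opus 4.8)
The plan is to assemble the identities and estimates that were established piece by piece throughout this section; the Proposition is precisely their bookkeeping summary. First I would start from the splitting \eqref{eq:splitBbelow}, which expresses $B_{\operatorname{below}}(f,g)$ as the sum over equal stopping cubes $\sum_{S}B_{\operatorname{below}}(\tilde P^\sigma_S f,P^w_S g)$ plus the sum over strictly nested ones $\sum_{S'\supsetneq S}B_{\operatorname{below}}(\tilde P^\sigma_S f,P^w_{S'}g)$, the remaining mixed terms vanishing. For the strictly nested part I would use \eqref{eq:BbelowUnequal} and the decomposition $1_{J_S}=1_{J_S\setminus S}+1_S$: the first summand is by definition $B^{(0)}_{\operatorname{tail}}(f,g)$ as in \eqref{eq:Bbelow4Poisson}, while the second, after the telescoping $\sum_{J\supsetneq S}\ave{\Delta^w_J g}^w_{J_S}=\ave{g}^w_S$, is bounded by $\mathcal{H}^*\Norm{f}{L^2(\sigma)}\Norm{g}{L^2(w)}$ via the local testing condition, Cauchy--Schwarz in $S$, orthogonality of the $\tilde P^\sigma_S f$, and the Carleson embedding \eqref{eq:StoSatisfy1}.

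For the equal stopping cubes I would next invoke the re-splitting \eqref{eq:resplitBbelow}, which extracts the local form $B_{\operatorname{local}}(f,g)$ of \eqref{eq:Blocal} and leaves $\sum_{S'\supsetneq S}B_{\operatorname{below}}(P^\sigma_S\tilde P^\sigma_{S'}f,P^w_{S'}g)$; by \eqref{eq:resplitBbelow2} this is a sum over $k=1,\dots,r$ of terms $\sum_S\sum_{J:J\supsetneq S^{(k)},\,\pi J=\pi(S^{(k)})}B(\Delta^{\sigma,r-k}_S P^\sigma_S f,1_{J_S})\ave{\Delta^w_J g}^w_{J_S}$. Splitting $1_{J_S}=1_{J_S\setminus S^{(k)}}+1_{S^{(k)}}$ once more separates off $B^{(k)}_{\operatorname{tail}}(f,g)$ of \eqref{eq:Bbelow4Poisson2}; the remaining $1_{S^{(k)}}$-piece, after the telescoping $\sum_J\ave{\Delta^w_J g}^w_{J_S}=\ave{g}^w_{S^{(k)}}-\ave{g}^w_{\pi(S^{(k)})}$, is dominated by $\mathcal{H}^*$ times $(\sum_S\Norm{\Delta^{\sigma,r-k}_S P^\sigma_S f}{L^2(\sigma)}^2)^{1/2}(\sum_S w(S^{(k)})\abs{\ave{g}^w_{S^{(k)}}-\ave{g}^w_{\pi(S^{(k)})}}^2)^{1/2}$, which is $\lesssim\mathcal{H}^*\Norm{f}{L^2(\sigma)}\Norm{g}{L^2(w)}$ by orthogonality and the Lemma of this section. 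Since $r$ is a fixed absolute constant, summing over the finitely many $k\in\{1,\dots,r\}$ costs only a constant factor.

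Collecting everything: $B_{\operatorname{below}}(f,g)$ equals $B_{\operatorname{local}}(f,g)+\sum_{k=0}^r B^{(k)}_{\operatorname{tail}}(f,g)$ plus a sum of error terms each of size $O(\mathcal{H}^*\Norm{f}{L^2(\sigma)}\Norm{g}{L^2(w)})$, which is exactly the asserted bound. The only genuinely delicate ingredient is the Lemma controlling the ``skew'' differences $\ave{g}^w_{S^{(k)}}-\ave{g}^w_{\pi(S^{(k)})}$, where the two averages live on different filtrations ($\mathscr{S}^{(k)}$ versus $\mathscr{S}$) and a bounded-overlap argument is needed to recover square-summability; but that Lemma is already in hand, so the present statement is pure reorganization of nested sums together with repeated use of the local testing condition and the Carleson embedding.
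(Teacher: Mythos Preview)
Your proposal is correct and follows essentially the same route as the paper: starting from the splitting \eqref{eq:splitBbelow}, handling the strictly nested part via \eqref{eq:BbelowUnequal} with the decomposition $1_{J_S}=1_{J_S\setminus S}+1_S$ to extract $B^{(0)}_{\operatorname{tail}}$ and an $\mathcal{H}^*$-bounded error, then re-splitting the equal part via \eqref{eq:resplitBbelow}--\eqref{eq:resplitBbelow2} to extract $B_{\operatorname{local}}$ and, for each $k=1,\dots,r$, the tail form $B^{(k)}_{\operatorname{tail}}$ plus another $\mathcal{H}^*$-bounded error controlled by the skew-difference Lemma. This is precisely the paper's argument, and you have correctly identified that the Proposition is a bookkeeping summary of the estimates already in hand.
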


\section{The tail forms and Poisson integral estimates}

In this section, which contains some of the key novelties compared to Lacey et al.~\cite{LSSU:2wHilbert}, we estimate the tail forms $B_{\operatorname{tail}}(f,g)$ from above. Fundamental to their analysis is the connection with the Poisson integral, for which we use the notation and normalization
\begin{equation*}
  Q(h\ud w,J)
  :=\int_{\R}\frac{h(x)\ud w(x)}{\abs{J}^2+(x-c_J)^2}.
\end{equation*}
Notice that $\frac1\pi\abs{J}\cdot Q(h\ud w,J)$ is the usual Poisson extension of the signed measure $h\ud w$ at the point $(c_J,\abs{J})\in\R_+^2$. However, the operator $Q$ is more natural for our purposes: it is in this form that the Poisson integral appears in our computations, and this same form is also most amenable for its further analysis by the (dyadic) methods that we have in mind. Observe in particular that the second factor in the definition of the $A_2$ constant $[\sigma,w]_{A_2}^*$ is comparable to
\begin{equation*}
  Q(1_{J^c}\ud w,J)\eqsim\int_{J^c}\frac{\ud w(x)}{(x-c_J)^2}.
\end{equation*}

For each $S$, the projection $\tilde P^{\sigma}_S f$ is a sum of the differences $\Delta_I^\sigma f$ over good intervals such that $\pi I^{(r)}=S$, so in particular $I\Subset S$. Let us denote by $\mathscr{K}_S$ the maximal dyadic cubes $I$ with the just listed properties. Thus every $I$ participating in $\tilde P^{\sigma}_S f$ is contained in some $K\in\mathscr{K}_S$, and we can write
\begin{equation*}
  \tilde P^\sigma_S f
  =\sum_{K\in\mathscr{K}_S} 1_K   \tilde P^\sigma_S f
  =: \sum_{K\in\mathscr{K}_S} \tilde P^{\sigma}_{S,K} f,
\end{equation*}
where each $\tilde P^{\sigma}_{S,K}$ again a Haar projection.

The goodness of these intervals implies that
\begin{equation*}
   \dist(K,S^c)\geq\abs{K}^\gamma\abs{S}^{1-\gamma}\geq 2^{r(1-\gamma)}\abs{K}\qquad\forall K\in\mathscr{K}_S.
\end{equation*}
Since $\spt\Phi_S^{w}g\subseteq S^c$, there is some separation to help us.

We need the following ``monotonicity principle'' from Lacey et al.~\cite{LSSU:2wHilbert}:

\begin{proposition}[(Lacey--Sawyer--Shen--Uriarte-Tuero \cite{LSSU:2wHilbert})]\label{prop:monot}
Let $\mathscr{I}$ be a collection of dyadic intervals contained in some interval $J$, and let
$\spt g,\spt h\subseteq J^c$, where $\abs{g}\leq h$. Then
\begin{equation}\label{eq:monot1}
  \Babs{B\Big(\sum_{I\in\mathscr{I}}\Delta_I^\sigma f,g\Big)}
  \leq B\Big(\sum_{I\in\mathscr{I}}\epsilon_I\Delta_I^\sigma f,h\Big),
\end{equation}
where $\id$ is the function $\id(x)=x$, and $\epsilon_I=\sign\pair{\Delta_I^\sigma f}{\id}_{\sigma}\in\{-1,+1\}$.

We also have the estimate,
\begin{equation}\label{eq:monot2}
  B\Big(\sum_{I\in\mathscr{I}}\epsilon_I\Delta_I^\sigma f,h\Big)
  \gtrsim\Bpair{\sum_{I\in\mathscr{I}}\epsilon_I\Delta_I^\sigma f}{\id}_{\sigma}   Q(h\ud w,J),
\end{equation}
with ``$\eqsim$'' in place of ``$\gtrsim$'' if $\spt h\subseteq(3J)^c$.
\end{proposition}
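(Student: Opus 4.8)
The plan is to exploit the fact that, since every $I\in\mathscr{I}$ satisfies $I\subseteq J$ while $\spt g,\spt h\subseteq J^c$, the form $B$ is computed by its kernel on the pairs we care about: decomposing the $J^c$-side functions into their parts on the two components of $J^c$, each disjoint from $J$, repeated use of~\eqref{eq:Bdisjoint} and bilinearity give
\[
  B\Bigl(\sum_{I\in\mathscr{I}}\Delta_I^\sigma f,g\Bigr)
  =\int_{J^c}\Bigl(\sum_{I\in\mathscr{I}}\pair{\Delta_I^\sigma f}{\rho_x}_\sigma\Bigr)g(x)\,\ud w(x),
  \qquad \rho_x(y):=\frac{1}{x-y}.
\]
The elementary observation driving everything is that for each fixed $x\notin J$ the function $y\mapsto\rho_x(y)$ is \emph{increasing} on $J$, with $\rho_x'(y)=(x-y)^{-2}>0$.

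First I would isolate a one-interval monotonicity lemma. Writing $\Delta_I^\sigma f=a\,1_{I_-}+b\,1_{I_+}$ over the children $I_-$ (left), $I_+$ (right) of $I$, with $a\,\sigma(I_-)+b\,\sigma(I_+)=0$, one has for any $\rho$
\[
  \pair{\Delta_I^\sigma f}{\rho}_\sigma=a\,\sigma(I_-)\bigl(\ave{\rho}_{I_-}^\sigma-\ave{\rho}_{I_+}^\sigma\bigr).
\]
If $\rho$ is increasing then $\ave{\rho}_{I_-}^\sigma\le\ave{\rho}_{I_+}^\sigma$, so the sign of $\pair{\Delta_I^\sigma f}{\rho}_\sigma$ equals $-\sign a$, independently of which increasing $\rho$ is used; in particular it coincides with $\sign\pair{\Delta_I^\sigma f}{\id}_\sigma=\epsilon_I$. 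Hence $\epsilon_I\pair{\Delta_I^\sigma f}{\rho_x}_\sigma=\abs{\pair{\Delta_I^\sigma f}{\rho_x}_\sigma}\ge0$ for every $x\in J^c$. (For the finitely many $I$ that actually occur, $\Delta_I^\sigma f\neq0$ forces both children to carry $\sigma$-mass, so $\pair{\Delta_I^\sigma f}{\id}_\sigma\neq0$ and $\epsilon_I$ is well defined.) Inserting this into the kernel representation and using $\abs g\le h$, $h\ge0$ and the triangle inequality,
\[
  \Babs{B\Bigl(\sum_I\Delta_I^\sigma f,g\Bigr)}
  \le\int_{J^c}\sum_I\abs{\pair{\Delta_I^\sigma f}{\rho_x}_\sigma}\,h(x)\,\ud w(x)
  =\int_{J^c}\Bpair{\sum_I\epsilon_I\Delta_I^\sigma f}{\rho_x}_\sigma h(x)\,\ud w(x)
  =B\Bigl(\sum_I\epsilon_I\Delta_I^\sigma f,h\Bigr),
\]
which is~\eqref{eq:monot1}; the last integrand is pointwise nonnegative, so the right-hand side is nonnegative as asserted.

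For~\eqref{eq:monot2} I would compare the weighted increment of $\rho_x$ across the children of each $I\subseteq J$ with that of $\id$. Since $\rho_x(y')-\rho_x(y'')=\int_{y''}^{y'}(x-t)^{-2}\,\ud t$ for $y''<y'$ in $J$, and $(x-t)^{-2}\ge(2\abs J^2+2(x-c_J)^2)^{-1}$ for $t\in J$, $x\in J^c$ (because $\abs{x-t}\le\abs{x-c_J}+\abs J$), averaging over $y'\in I_+$, $y''\in I_-$ and using the displayed formula gives $\abs{\pair{\Delta_I^\sigma f}{\rho_x}_\sigma}\gtrsim\abs{\pair{\Delta_I^\sigma f}{\id}_\sigma}\,(\abs J^2+(x-c_J)^2)^{-1}$; summing over $I$ and integrating against $h\,\ud w$ yields $B(\sum_I\epsilon_I\Delta_I^\sigma f,h)\gtrsim\pair{\sum_I\epsilon_I\Delta_I^\sigma f}{\id}_\sigma\,Q(h\ud w,J)$. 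When moreover $\spt h\subseteq(3J)^c$, one has $\dist(x,J)\ge\abs J$ and $\dist(x,J)\ge\tfrac23\abs{x-c_J}$, hence $(x-t)^{-2}\le\dist(x,J)^{-2}\lesssim(\abs J^2+(x-c_J)^2)^{-1}$ on $J$, giving the matching upper bound and thus ``$\eqsim$''.

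The substantive content is entirely in the two elementary facts above — the monotonicity of $y\mapsto(x-y)^{-1}$ off $J$, and the weighted mean-zero property of $\Delta_I^\sigma f$ — which together make the Poisson factor $(\abs J^2+(x-c_J)^2)^{-1}$ emerge. I expect the only delicate points to be bookkeeping: arranging the $J^c$-side decomposition so that~\eqref{eq:Bdisjoint} applies, the interchange of the (finite) sum over $\mathscr{I}$ with the integral, and the well-definedness of $\epsilon_I$ — all routine in the fixed finite, piecewise-constant setting.
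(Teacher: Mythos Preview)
Your proof is correct and follows essentially the same approach as the paper: both exploit the mean-zero property of $\Delta_I^\sigma f$ together with the monotonicity of $y\mapsto(x-y)^{-1}$ on $J$ for $x\in J^c$, and both deduce the Poisson comparison from the elementary bounds $\abs{x-y}\eqsim\abs{x-c_J}$ on the relevant region. The only cosmetic difference is that the paper subtracts the constant $1/(x-c_I)$ to obtain the factor $(y-c_I)/((x-y)(x-c_I))$ explicitly, whereas you phrase the same observation via the sign of $\pair{\Delta_I^\sigma f}{\rho}_\sigma$ for increasing $\rho$ and the integral formula for $\rho_x(y')-\rho_x(y'')$.
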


\begin{proof}
We give a short proof for completeness.
Using $\int\Delta_I^\sigma f\ud\sigma=0$ to replace $1/(x-y)$ by $1/(x-y)-1/(x-c_I)$, we have
\begin{equation*}
  B\Big(\sum_{I\in\mathscr{I}}\Delta_I^\sigma f,g\Big)
  =\sum_{I\in\mathscr{I}}\iint\Delta_I^\sigma f(y)\frac{(y-c_I)}{(x-y)(x-c_I)}g(x)\ud\sigma(y)\ud w(x).
\end{equation*}
Note that $(x-y)(x-c_I)>0$ for $(x,y)\in J^c\times J$, since $x$ lies either to the left or to the right of $J$, while both $y,c_I\in J$. Also,  $(y-c_I)\Delta_I^\sigma f(y)$ has constant sign on its support, equal to the sign of its integral, which is
\begin{equation*}
  \sign\int (y-c_I)\Delta_I^\sigma f(y)\ud\sigma(y)
  =  \sign\int y\Delta_I^\sigma f(y)\ud\sigma(y)=\epsilon_I.
\end{equation*}
Thus, estimating up by bringing the absolute values inside and using $\abs{g(x)}\leq h(x)$, we arrive at the upper bound
\begin{equation*}
  \sum_{I\in\mathscr{I}}\iint\epsilon_I\Delta_I^\sigma f(y)\frac{(y-c_I)}{(x-y)(x-c_I)}h(x)\ud\sigma(y)\ud w(x)
  =B\Big(\sum_{I\in\mathscr{I}}\epsilon_I\Delta_I^\sigma f,h\Big),
\end{equation*}
which proves \eqref{eq:monot1}.

We can also derive a lower bound for this integral by observing that $\abs{x-y},\abs{x-c_I}\leq 2\abs{x-c_J}$, so that
\begin{equation*}
   B\Big(\sum_{I\in\mathscr{I}}\epsilon_I\Delta_I^\sigma f,h\Big)
   \geq\frac{1}{4}\Bpair{\sum_{I\in\mathscr{I}}\epsilon_I\Delta_I^\sigma f}{\id}_{\sigma}\int\frac{h(x)\ud w(x)}{\abs{x-c_J}^2}.
\end{equation*}
If $\spt h\subseteq (3J)^c$, we also have $\abs{x-c_J}\leq 2\abs{x-y}, 2\abs{x-c_I}$, so that we can reverse the previous estimate. This proves \eqref{eq:monot2} and its variant with ``$\eqsim$''.
\end{proof}

Denoting
\begin{equation*}
  \tilde{f}:=\sum_{I\in\mathscr{D}}\epsilon_I\Delta_I^\sigma f,\qquad\epsilon_I:=\sign\pair{\Delta_I^\sigma f}{\id}_{\sigma},
\end{equation*}
we thus have
\begin{equation}\label{eq:BkTailEst1}
\begin{split}
  \abs{B_{\operatorname{tail}}(f,g)}
  &\leq\sum_{S\in\mathscr{S}}\sum_{K\in\mathscr{K}_S}
       \abs{B(\tilde P_{S,K}^{\sigma}f,\Phi_S^{w}g)} \\
  &\lesssim\sum_{S\in\mathscr{S}}\sum_{K\in\mathscr{K}_S}\pair{\tilde P_{S,K}^{\sigma}\tilde{f}}{\id}_\sigma
     Q(\abs{\Phi_S^{w}g}\ud w,K) \\
  &\leq\Norm{f}{L^2(\sigma)}\Big(\sum_{S\in\mathscr{S}}\sum_{K\in\mathscr{K}_S}\Norm{\tilde P_{S,K}^{\sigma}\id }{L^2(\sigma)}^2
     Q(\abs{\Phi_S^{w}g}\ud w,K)^2\Big)^{1/2},
\end{split}
\end{equation}
using Cauchy--Schwarz, the orthogonality of the projections $\tilde P_{S,K}^{\sigma}$, and the fact that $\Norm{\tilde f}{L^2(\sigma)}=\Norm{f}{L^2(\sigma)}$.

For the subsequent analysis, we want to replace $Q(\cdot,K)$ by a more dyadic object. Let $\mathscr{D}$ be our underlying dyadic system, and recall that the collection of tripled intervals $\{3I:I\in\mathscr{D}\}$ can be partitioned into three subcollections $\mathscr{D}^u$, $u=0,1,2$, each of which is a new dyadic system (in the sense of nesting and covering properties), except that the side-lengths of the intervals are of the form $3\cdot 2^k$.

For an interval $K$ and $u=0,1,2$, we denote by $I^u(K)$ the unique (if it exists) $I\in\mathscr{D}^u$ such that $3K\subset I$ and $9\abs{K}<\abs{I}\leq 18\abs{K}$. (If, as in our applications, $K\in\mathscr{D}$ itself is dyadic, the last condition means that $\abs{I}=12\abs{K}$.) For definiteness, let $I^u(K):=\varnothing$ if such an interval does not exist.

We need the following geometric result, which is essentially \cite[Lemma~2.5]{HLP}, and can be proven in the same way.

\begin{lemma}[(cf.~\cite{HLP}, Lemma~2.5)]
For every $K$ and $j\geq 1$, we have at least two values of $u$ such that $I^u(K)\supset 3K$, and at least one value of $u$ such that in addition $(I^u(K))^{(j)}\supset 3\cdot 2^j K$.
\end{lemma}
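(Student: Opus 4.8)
The plan is to reduce the whole statement to an elementary counting fact about triples of neighbouring dyadic intervals. Since the lemma is only needed for dyadic $K\in\mathscr{D}$, I may assume $\abs{K}=2^{-p}$; the constraint $9\abs{K}<\abs{I}\le 18\abs{K}$ together with $\abs{I}\in\{3\cdot 2^k\}$ then forces $\abs{I}=12\abs{K}$, so $I^u(K)$, when it exists, is the unique interval of $\mathscr{D}^u$ of length $12\abs{K}$ containing $3K$. The structural input to be recalled is that at each fixed scale the three families $\mathscr{D}^0,\mathscr{D}^1,\mathscr{D}^2$ realise the three ``mod $3$'' tilings of the tripled dyadic intervals; consequently, for any point $x$ and any scale $2^k$, the intervals of $\mathscr{D}^0,\mathscr{D}^1,\mathscr{D}^2$ of length $3\cdot 2^k$ that contain $x$ are exactly the three tripled intervals $3M^-,3M,3M^+$, where $M$ is the dyadic interval of length $2^k$ with $x\in M$ and $M^\pm$ its two dyadic neighbours of the same length (these being precisely the dyadic $L$ of length $2^k$ with $x\in 3L$).

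The core observation is the following: if $M=[M_L,M_R)$ is dyadic, $M^\pm$ are its neighbours of equal length, and $T$ is an interval with centre $c_T\in M$ and $\abs{T}\le\tfrac34\abs{M}$, then $T$ is contained in at least two of $3M^-,3M,3M^+$. Indeed $T\subseteq 3M$ always, since the centre lies in $M$ and the half-length is $<\abs{M}$; and a direct computation gives $T\subseteq 3M^-\iff c_T\le M_L+\abs{M}-\tfrac12\abs{T}$ and $T\subseteq 3M^+\iff c_T\ge M_R-\abs{M}+\tfrac12\abs{T}$, and these two can fail simultaneously only if $\abs{T}>\abs{M}$, which is excluded. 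Thus at most one of the last two containments fails.

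Now I apply this twice. For the first assertion, take $M=K^{(2)}$ (so $\abs{M}=4\abs{K}$) and $T=3K$ (so $\abs{T}=\tfrac34\abs{M}$, $c_T=c_K\in M$): at least two of the three intervals of $\mathscr{D}^0,\mathscr{D}^1,\mathscr{D}^2$ of length $12\abs{K}$ through $c_K$ contain $3K$. Since for each $u$ the interval $I^u(K)$ exists and equals the $\mathscr{D}^u$-interval of length $12\abs{K}$ through $c_K$ precisely when the latter contains $3K$, the set $U_0:=\{u:3K\subseteq I^u(K)\}$ has $\abs{U_0}\ge 2$. For the second assertion, fix $j\ge 1$ and apply the fact with $M=K^{(j+2)}$ (length $4\cdot 2^j\abs{K}$) and $T=3\cdot 2^j K$ (length $\tfrac34\abs{M}$, centre $c_K\in K^{(2)}\subseteq M$): at least two values of $u$, say $u\in W_j$, have the $\mathscr{D}^u$-interval of length $12\cdot 2^j\abs{K}$ through $c_K$ contain $3\cdot 2^j K$. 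For $u\in U_0$ that interval is exactly $(I^u(K))^{(j)}$, because $\mathscr{D}^u$ is nested and $(I^u(K))^{(j)}\supseteq I^u(K)\ni c_K$ has length $12\cdot 2^j\abs{K}$. Finally $\abs{U_0\cap W_j}\ge\abs{U_0}+\abs{W_j}-3\ge 1$, and any $u$ in this intersection satisfies both $3K\subseteq I^u(K)$ and $3\cdot 2^j K\subseteq (I^u(K))^{(j)}$.

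The only point requiring genuine care is the structural input — that the three $\mathscr{D}^u$ realise, at every fixed scale, the three mod-$3$ tilings, so that the three $\mathscr{D}^u$-intervals of a given length through a point are the triples of three consecutive dyadic intervals — together with the bookkeeping identifying $(I^u(K))^{(j)}$ as the scale-$12\cdot 2^j\abs{K}$ member of $\mathscr{D}^u$ through $c_K$; the remainder is the elementary interval arithmetic above, exactly along the lines of \cite[Lemma~2.5]{HLP}.
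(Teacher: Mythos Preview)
Your argument is correct. The paper does not give a proof of this lemma at all, merely citing \cite[Lemma~2.5]{HLP} and stating that it ``can be proven in the same way''; your write-up is a clean, self-contained execution of exactly that kind of argument, and the reduction to the elementary fact that an interval $T$ with $c_T\in M$ and $\abs{T}\le\tfrac34\abs{M}$ lies in at least two of $3M^-,3M,3M^+$ is correct (the failure of both side-containments forces $\abs{M}<\abs{T}$, as you show). The only remark is that you restrict to dyadic $K$, whereas the lemma is stated for general $K$; this is harmless since the paper only applies the lemma with $K\in\mathscr{D}$, and the same counting works for non-dyadic $K$ after choosing the correct scale $\abs{I}\in(9\abs{K},18\abs{K}]$.
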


This leads to the following dyadic approximation of the Poisson integral:

\begin{proposition}
For $h\geq 0$, we have
\begin{equation*}
  Q(h,K)
  \eqsim\sum_{u:I^u(K)\neq\varnothing}Q^u(h,I^u(K)),
\end{equation*}
where
\begin{equation*}
  Q^u(h,I):=\sum_{j=0}^\infty\frac{1}{\abs{I^{(j)}}^2}\int_{I^{(j)}}h,
\end{equation*}
and the ancestors $I^{(j)}$ are taken within the system $\mathscr{D}^u$.
\end{proposition}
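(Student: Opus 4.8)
The plan is to prove the two-sided estimate $Q(h,K)\eqsim\sum_{u:I^u(K)\neq\varnothing}Q^u(h,I^u(K))$ by comparing each side dyadic-annulus by dyadic-annulus, using the preceding geometric lemma to handle the upper bound on $Q$ and an easy containment for the lower bound.

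\textbf{Lower bound $Q(h,K)\gtrsim\sum_u Q^u(h,I^u(K))$.} First I would fix $u$ with $I:=I^u(K)\neq\varnothing$, and for each $j\geq 0$ observe that $I^{(j)}$ (taken in $\mathscr{D}^u$) satisfies $\abs{I^{(j)}}\eqsim 2^j\abs{K}$, and that for $x\in I^{(j)}$ one has $\abs{x-c_K}\leq\operatorname{diam}(I^{(j)})\lesssim 2^j\abs{K}\eqsim\abs{I^{(j)}}$, hence $\abs{K}^2+(x-c_K)^2\lesssim\abs{I^{(j)}}^2$. Therefore
\begin{equation*}
  \frac{1}{\abs{I^{(j)}}^2}\int_{I^{(j)}}h
  \lesssim\int_{I^{(j)}}\frac{h(x)\,\ud w(x)}{\abs{K}^2+(x-c_K)^2}
  \leq Q(h,K).
\end{equation*}
Actually one wants the whole sum over $j$, not just one term, so instead I split the integral defining $Q^u(h,I)$ geometrically: write $\frac{1}{\abs{I^{(j)}}^2}\int_{I^{(j)}}h=\frac{1}{\abs{I^{(j)}}^2}\sum_{l=0}^{j}\int_{I^{(l)}\setminus I^{(l-1)}}h$ (with $I^{(-1)}:=\varnothing$), and then $\sum_{j\geq 0}\frac{1}{\abs{I^{(j)}}^2}\sum_{l\leq j}(\cdots)=\sum_{l\geq 0}\big(\sum_{j\geq l}\abs{I^{(j)}}^{-2}\big)\int_{I^{(l)}\setminus I^{(l-1)}}h\eqsim\sum_{l\geq 0}\abs{I^{(l)}}^{-2}\int_{I^{(l)}\setminus I^{(l-1)}}h$, by summing the geometric tail $\sum_{j\geq l}2^{-2j}\eqsim2^{-2l}$. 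On the annulus $I^{(l)}\setminus I^{(l-1)}$ we have $\abs{x-c_K}\gtrsim\abs{I^{(l-1)}}\eqsim\abs{I^{(l)}}$ when $l\geq 1$ (since $3K\subset I=I^{(0)}$ and $I^{(l-1)}\supset I^{(0)}\supset 3K$, so $c_K$ is well inside $I^{(l-1)}$), and trivially $\abs{K}^2+(x-c_K)^2\gtrsim\abs{I}^2$ on $I^{(0)}$ in the case $l=0$ using $\abs{I}\eqsim\abs{K}$. Hence $\abs{I^{(l)}}^{-2}\lesssim\big(\abs{K}^2+(x-c_K)^2\big)^{-1}$ on that annulus, and summing over $l$ gives $Q^u(h,I)\lesssim Q(h,K)$. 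Summing over the (at most three) values of $u$ costs only a constant.

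\textbf{Upper bound $Q(h,K)\lesssim\sum_u Q^u(h,I^u(K))$.} I would decompose $\R$ according to the dyadic annuli around $3K$: let $A_0:=3K$ and $A_j:=3\cdot 2^j K\setminus 3\cdot 2^{j-1}K$ for $j\geq 1$, so $\R=\bigcup_{j\geq 0}A_j$ and
\begin{equation*}
  Q(h,K)=\sum_{j\geq 0}\int_{A_j}\frac{h(x)\,\ud w(x)}{\abs{K}^2+(x-c_K)^2}
  \lesssim\sum_{j\geq 0}\frac{1}{(2^j\abs{K})^2}\int_{A_j}h
  \leq\sum_{j\geq 0}\frac{1}{\abs{3\cdot 2^jK}^2}\int_{3\cdot 2^jK}h,
\end{equation*}
since $\abs{K}^2+(x-c_K)^2\gtrsim(2^j\abs{K})^2$ for $x\in A_j$. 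Now for each fixed $j\geq 1$ the preceding geometric lemma furnishes a value $u=u(j)\in\{0,1,2\}$ with $I^{u}(K)\supset 3K$ and $(I^{u}(K))^{(j)}\supset 3\cdot 2^jK$; moreover $\abs{(I^u(K))^{(j)}}\eqsim 2^j\abs{K}\eqsim\abs{3\cdot 2^jK}$, so $\frac{1}{\abs{3\cdot 2^jK}^2}\int_{3\cdot 2^jK}h\lesssim\frac{1}{\abs{(I^u(K))^{(j)}}^2}\int_{(I^u(K))^{(j)}}h$, which is exactly the $j$th term of $Q^{u}(h,I^u(K))$ (the lemma also covers $j=0$ with $u$ any of the at least two values having $I^u(K)\supset 3K$). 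Grouping the indices $j$ by the value $u(j)$ and using that each term is nonnegative, $\sum_{j\geq 0}\frac{1}{\abs{3\cdot 2^jK}^2}\int_{3\cdot 2^jK}h\lesssim\sum_{u=0}^{2}\sum_{j:\,u(j)=u}\frac{1}{\abs{(I^u(K))^{(j)}}^2}\int_{(I^u(K))^{(j)}}h\leq\sum_{u:I^u(K)\neq\varnothing}Q^u(h,I^u(K))$. This completes the upper bound.

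\textbf{Main obstacle.} Neither direction is deep; the one point that needs care is making sure that the ``ancestor of $I^u(K)$ in $\mathscr{D}^u$'' really does grow like $2^j\abs{K}$ and really does contain the Euclidean ball $3\cdot 2^jK$ — this is precisely what the cited geometric lemma (the analogue of \cite[Lemma~2.5]{HLP}) is there to guarantee, and without it a single dyadic tower need not capture all the annuli. The bookkeeping of summing the geometric series in $j$ in the lower bound (swapping the order of summation between $j$ and the annulus index $l$) is the only mildly fiddly computation, but it is entirely routine.
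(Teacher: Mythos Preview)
Your upper bound $Q(h,K)\lesssim\sum_u Q^u(h,I^u(K))$ is correct and is essentially the paper's argument: decompose into concentric dilates of $K$, and for each scale $j$ invoke the geometric lemma to find a $u(j)$ with $(I^{u(j)}(K))^{(j)}\supset 3\cdot 2^jK$.

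The lower bound, however, contains a genuine error. On the annulus $I^{(l)}\setminus I^{(l-1)}$ you assert $\abs{x-c_K}\gtrsim\abs{I^{(l-1)}}$, justified by ``$c_K$ is well inside $I^{(l-1)}$''. This is false: the only separation you control is $\dist(c_K,\partial I^{(0)})\geq\tfrac32\abs{K}$ from $3K\subset I^{(0)}$, but for $l\geq 2$ the interval $I^{(0)}$ can share an endpoint with $I^{(l-1)}$, so $\dist(c_K,\partial I^{(l-1)})$ may be as small as $\tfrac32\abs{K}$, whence $\abs{x-c_K}/\abs{I^{(l-1)}}$ can be of order $2^{-l}$. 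Moreover, even were the claim true, it would yield $\abs{K}^2+(x-c_K)^2\gtrsim\abs{I^{(l)}}^2$, i.e.\ the \emph{reverse} of the inequality $\abs{I^{(l)}}^{-2}\lesssim(\abs{K}^2+(x-c_K)^2)^{-1}$ that you then state. The actual (trivial) justification for that desired inequality is $\abs{x-c_K}\leq\abs{I^{(l)}}$ (both points lie in $I^{(l)}$) together with $\abs{K}\lesssim\abs{I^{(l)}}$, giving $\abs{K}^2+(x-c_K)^2\lesssim\abs{I^{(l)}}^2$. With this correction your argument goes through; in fact it then no longer needs the annulus decomposition at all.

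For comparison, the paper argues this direction more directly: it first records the standard equivalence $Q(h,K)\eqsim\sum_{j\geq 0}\abs{2^jK}^{-2}\int_{2^jK}h$, and then observes that each $I^{(j)}$ is contained in $2^{6+j}K$ with $\abs{I^{(j)}}\eqsim\abs{2^{6+j}K}$, so that the terms of $Q^u(h,I^u(K))$ are dominated term-by-term by those of the dyadic Poisson sum. This avoids your Fubini step entirely.
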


\begin{proof}
It is easy to see that
\begin{equation*}
  Q(h,K)\eqsim\sum_{j=0}^\infty \frac{1}{\abs{2^j K}^2}\int_{2^j K}h.
\end{equation*}
If $I=I^u(K)$, we have $I^{(j)}\subset 2^{6+j}K$, so that $\int_{I^{(j)}}h\leq\int_{2^{6+j}K}h$ while $\abs{{I}^{(j)}}\eqsim\abs{2^{6+j}K}$, from which $Q^u(h,I^u(K))\lesssim Q(h,K)$ immediately follows. On the other hand, for each $j$, let $u(j)$ be a value for which the conclusion of the Lemma are valid, so that $\int_{(I^{u(j)}(K))^{(j)}}h\geq \int_{2^j K}h$ while $\abs{ (I^{u(j)}(K))^{(j)} } \eqsim \abs{ 2^j K }$. Hence
\begin{equation*}
\begin{split}
  \sum_{u:I^u(K)\neq\varnothing}Q^u(h,I^u(K))
  &\geq\sum_{j=0}^\infty\frac{1}{\abs{ (I^{u(j)}(K))^{(j)} }^2 }\int_{(I^{u(j)}(K))^{(j)}} h \\
  &\gtrsim\sum_{j=0}^\infty\frac{1}{\abs{ 2^j K}^2}\int_{2^j K} h
  \eqsim Q(h,K).\qedhere
\end{split}
\end{equation*}
\end{proof}

Defining $Q^u(h,\varnothing):=0$, we can simply write
\begin{equation*}
  Q(h,K)\eqsim\sum_u Q^u(h,I^u(K))
\end{equation*}
and then \eqref{eq:BkTailEst1} implies that
\begin{equation*}
\begin{split}
  &\abs{B_{\operatorname{tail}}(f,g)}^2 \\
  &\lesssim\Norm{f}{L^2(\sigma)}^2\sum_{u\in\{0,1,2\}}\sum_{S\in\mathscr{S}}\sum_{K\in\mathscr{K}_S}\Norm{\tilde P_{S,K}^{\sigma}\id }{L^2(\sigma)}^2
     Q^u(\abs{\Phi_S^{w}g}\ud w,I^u(K))^2.
\end{split}
\end{equation*}
Taking into account that
\begin{equation*}
  \abs{\Phi_S^{w}g}
  \lesssim 1_{S^c}M_w g
  \leq 1_{(I^u(K))^c}M_w g,
\end{equation*}
where we recalled that $\dist(K,S^c)\geq 2^{(1-\gamma)r}\abs{K}$ and observed that $I^u(K)\subseteq 30K$, we may continue with
\begin{equation*}
\begin{split}
  \abs{B_{\operatorname{tail}}(f,g)}^2
  &\lesssim\Norm{f}{L^2(\sigma)}^2\sum_{u\in\{0,1,2\}}\sum_{I\in\mathscr{D}^u}\mu_I\cdot
     Q^u(1_{I^c}M_w g\ud w,I)^2, \\
  \mu_I & :=  \sum_{S\in\mathscr{S}}\sum_{\substack{K\in\mathscr{K}_S \\ I^u(K)=I }}\Norm{\tilde P_{S,K}^{\sigma}\id }{L^2(\sigma)}^2
\end{split}
\end{equation*}

To complete the estimate of the tail forms, we would like to have the bound
\begin{equation}\label{eq:tailNeedToComplete}
  \sum_{I\in\mathscr{D}^u}\mu_I\cdot
     Q^u(1_{I^c} h\ud w,I)^2
    \lesssim \mathcal{Q}^2 \Norm{h}{L^2(w)}^2\qquad\forall h\in L^2(w),
\end{equation}
with appropriate control of $\mathcal{Q}$ in terms of the assumptions; namely, we just apply this to $h=M_w g$ and use the universal bound for the dyadic maximal operator, $\Norm{M_w g}{L^2(w)}\leq 2\Norm{g}{L^2(w)}$.

Proving the Poisson integral estimate \eqref{eq:tailNeedToComplete} will occupy our efforts for the rest of this section.

\subsection{Positive dyadic operators}

As a tool for estimating the Poisson integrals from the previous considerations, we develop a two-weight theory for a class of positive dyadic operators that are slightly more complicated than those considered by Nazarov--Treil--Volberg \cite{NTV:bilinear} and Lacey--Sawyer--Uriarte-Tuero \cite{LSU:positive}. Our approach to these operators is an elaboration of the argument from \cite{Hytonen:A2survey}.

Let $\mathscr{D}$ be a system of dyadic cubes in some $\R^d$, $d\geq 1$, and let $\sigma,w$ be two locally finite measures. (We shall essentially need the case $d=2$, but prefer to write the argument with a generic $d$, since there is no added difficulty.) To every $Q\in\mathscr{D}$, let a nonnegative number $\lambda_Q\geq 0$ and two distinguished child-cubes $Q_+\neq Q_-$ be associated. We then consider the bilinear form
\begin{equation*}
   \Lambda(f,g):=\sum_{Q\in\mathscr{D}}\lambda_Q\int_{Q_+} f\ud\sigma\int_{Q_-}g\ud w
\end{equation*}
and the sub-forms
\begin{equation*}
  \Lambda_R(f,g):=\sum_{Q:Q\subseteq R}\lambda_Q\int_{Q_+} f\ud\sigma\int_{Q_-}g\ud w
\end{equation*}

The previous studies \cite{Hytonen:A2survey,LSU:positive,NTV:bilinear} considered a version with $Q_-=Q_+=Q$. The present form has the slight additional complication of interactions between the distinct adjacent cubes $Q_-$ and $Q_+$. Nevertheless, the theorem stays almost the same.

\begin{theorem}\label{thm:posDyad}
Let $1<p<\infty$. We have
\begin{equation*}
   \Norm{\Lambda}{}:=\sup_{\substack{\Norm{f}{L^p(\sigma)}=1 \\ \Norm{g}{L^{p'}(w)}=1}}\abs{\Lambda(f,g)}<\infty
\end{equation*}
if and only if
\begin{equation*}
\begin{split}
  \mathcal{U} &:=\sup_{Q\in\mathscr{D}}\lambda_Q\sigma(Q_+)^{1/p'}w(Q_-)^{1/p}<\infty, \\
  \mathcal{T} &:=\sup_{\substack{Q\in\mathscr{D} \\ \Norm{g}{L^{p'}(w)}=1}}\sigma(Q)^{-1/p}\abs{\Lambda_Q(1,g)}<\infty, \\
  \mathcal{T}^* &:=\sup_{\substack{Q\in\mathscr{D} \\ \Norm{f}{L^{p}(\sigma)}=1}}w(Q)^{-1/p'}\abs{\Lambda_Q(f,1)}<\infty,
\end{split}
\end{equation*}
and moreover $\Norm{\Lambda}{}\eqsim\mathcal{U}+\mathcal{T}+\mathcal{T}^*$.
\end{theorem}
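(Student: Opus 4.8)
The plan is to prove Theorem~\ref{thm:posDyad} by reducing it to the (essentially known) theory of positive dyadic operators of the ``diagonal'' type $Q_+=Q_-=Q$, for which the testing characterization is available in \cite{Hytonen:A2survey,LSU:positive,NTV:bilinear}. The necessity direction is routine: testing against $f=1$, respectively $g=1$, localized to $R$, and against indicators of single cubes $Q_+$ gives $\mathcal{T},\mathcal{T}^*,\mathcal{U}\lesssim\Norm{\Lambda}{}$. The substance is the sufficiency direction, $\Norm{\Lambda}{}\lesssim\mathcal{U}+\mathcal{T}+\mathcal{T}^*$.

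First I would set up the stopping-time/parallel-corona machinery exactly as in the $A_2$-survey argument \cite{Hytonen:A2survey}. Fix nonnegative $f,g$ with $\Norm{f}{L^p(\sigma)}=\Norm{g}{L^{p'}(w)}=1$. Build two stopping families: $\mathscr{F}\subseteq\mathscr{D}$ of ``$\sigma$-stopping cubes'' where the $\sigma$-average of $f$ doubles (starting from the top cube), and $\mathscr{G}\subseteq\mathscr{D}$ of ``$w$-stopping cubes'' where the $w$-average of $g$ doubles. Both families are Carleson: $\sum_{F'\subseteq F,\,F'\in\mathscr{F}}\sigma(F')\lesssim\sigma(F)$ and similarly for $\mathscr{G}$ with $w$. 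Write $\pi_{\mathscr{F}}Q$ and $\pi_{\mathscr{G}}Q$ for the minimal stopping cube of each type containing $Q$, and note that on $Q$ one has the pointwise control $\fint_{Q_+}f\ud\sigma\lesssim\ave{f}_{\pi_{\mathscr{F}}Q}^\sigma$ because $Q_+\subseteq Q\subseteq\pi_{\mathscr{F}}Q$ (here the only mild wrinkle compared to the diagonal case is that the relevant average is over $Q_+$, a fixed child of $Q$, so $\sigma(Q_+)\leq\sigma(Q)$ and the stopping bound still applies verbatim). Then split
\begin{equation*}
  \Lambda(f,g)=\sum_{(F,G)}\ \sum_{\substack{Q:\ \pi_{\mathscr{F}}Q=F\\ \pi_{\mathscr{G}}Q=G}}\lambda_Q\int_{Q_+}f\ud\sigma\int_{Q_-}g\ud w,
\end{equation*}
and separate the diagonal pairs $F=G$ (together with the comparable nested pairs $F\subseteq G$ and $G\subseteq F$ that get absorbed into the diagonal coronas) from the genuinely ``off-diagonal'' pairs.

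Next, for the diagonal/comparable part I would replace $\int_{Q_+}f\ud\sigma$ by $\sigma(Q_+)\ave{f}_{\pi_{\mathscr{F}}Q}^\sigma$ and $\int_{Q_-}g\ud w$ by $w(Q_-)\ave{g}_{\pi_{\mathscr{G}}Q}^w$ (lose a constant, using the stopping inequalities), so that within a fixed corona $F=G$ the inner sum becomes $\ave{f}_F^\sigma\ave{g}_G^w\sum_{Q}\lambda_Q\sigma(Q_+)w(Q_-)$; the latter sum is exactly what the local testing conditions $\mathcal{T},\mathcal{T}^*$ control, after another application of the $\mathcal{U}$ bound to handle the telescoping — this is the standard ``$\mathcal{T}+\mathcal{T}^*+\mathcal{U}$'' estimate inside one corona. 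Summing over coronas is then a Carleson/$L^p$ duality step: the outer sum is $\sum_F\ave{f}_F^\sigma\ave{g}_F^w(\text{corona mass})$, controlled by $\bigl(\sum_F (\ave f_F^\sigma)^p\sigma(F)\bigr)^{1/p}\bigl(\sum_F(\ave g_F^w)^{p'}w(F)\bigr)^{1/p'}\lesssim\Norm f{L^p(\sigma)}\Norm g{L^{p'}(w)}$ by the Carleson embedding theorem in $L^p$ and $L^{p'}$ respectively. For the off-diagonal part, where $\pi_{\mathscr{F}}Q$ and $\pi_{\mathscr{G}}Q$ are incomparable, one uses that for such $Q$ the cube $Q$ is strictly below one of $F\cap G$'s ... more precisely one of $F,G$ strictly contains $Q$ while the other does not, so that e.g. $\ave f_F^\sigma\lesssim\ave f_G^\sigma$ is \emph{not} available but the relevant average of $f$ over $Q_+$ is still $\lesssim\ave f_{\pi_{\mathscr F}Q}^\sigma$, and one estimates this piece by $\mathcal{U}$ times a product of $\ell^2$ (or rather $\ell^p\times\ell^{p'}$) sums of corona data, again closed by Carleson embedding; this is the place where only the Muckenhoupt-type constant $\mathcal U$ enters, with no testing needed.

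The main obstacle I expect is \emph{not} any single estimate but the bookkeeping forced by $Q_+\neq Q_-$: one must check that every pointwise ``freeze the average'' step still only ever compares an average over a child of $Q$ with a stopping average over an ancestor of $Q$, which is true since $Q_\pm\subseteq Q$, and that the adjacency of $Q_+$ and $Q_-$ never creates a pair of cubes that are far apart in a way the diagonal argument didn't anticipate — it doesn't, because both children sit inside the same $Q$. So the honest statement is: the proof is the one from \cite{Hytonen:A2survey} with $\int_Q f\ud\sigma$ replaced by $\int_{Q_+}f\ud\sigma$ and $\int_Q g\ud w$ by $\int_{Q_-}g\ud w$ throughout, the stopping cubes built from $f,g$ as above, and one verifies line by line that each inequality survives. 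I would write it as: (i) necessity; (ii) construction of the two parallel stopping families and their Carleson property; (iii) the corona decomposition of $\Lambda(f,g)$; (iv) the within-corona estimate using $\mathcal T,\mathcal T^*,\mathcal U$; (v) the cross-corona estimate using $\mathcal U$ alone; (vi) summation over coronas via the $L^p$/$L^{p'}$ Carleson embedding theorem, yielding $\Lambda(f,g)\lesssim(\mathcal U+\mathcal T+\mathcal T^*)\Norm f{L^p(\sigma)}\Norm g{L^{p'}(w)}$.
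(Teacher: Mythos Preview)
Your overall plan (parallel stopping families $\mathscr{F}$ for $(f,\sigma)$ and $\mathscr{G}$ for $(g,w)$, corona decomposition, Carleson embedding) is the same as the paper's, but there is a genuine gap in the step you flag as ``a mild wrinkle''. You claim that $\fint_{Q_+}f\,\ud\sigma\lesssim\ave{f}_{\pi_{\mathscr{F}}Q}^\sigma$ follows from $Q_+\subseteq Q\subseteq\pi_{\mathscr{F}}Q$. It does not: the stopping construction only guarantees $\ave{f}_R^\sigma\leq 2\ave{f}_F^\sigma$ for cubes $R$ with $\pi_{\mathscr{F}}R=F$. If $Q_+$ happens to be a stopping child of $F=\pi_{\mathscr{F}}Q$ (i.e.\ $Q_+\in\mathscr{F}$), then $\ave{f}_{Q_+}^\sigma$ can be arbitrarily large relative to $\ave{f}_F^\sigma$, and your within-corona freeze fails. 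The same issue arises for $Q_-$ and $\mathscr{G}$. This is precisely the ``slight additional complication'' the paper warns about, and it is the whole content of the theorem beyond the diagonal case.

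Relatedly, your ``off-diagonal part where $\pi_{\mathscr{F}}Q$ and $\pi_{\mathscr{G}}Q$ are incomparable'' is vacuous: both of these cubes contain $Q$, so they always intersect and are always nested. You have put the $\mathcal{U}$-piece in the wrong place. The paper's remedy is to organize the sum by $(\pi_{\mathscr{F}}Q_+,\pi_{\mathscr{G}}Q_-)$ rather than $(\pi_{\mathscr{F}}Q,\pi_{\mathscr{G}}Q)$. Then one first isolates the cubes $Q$ with \emph{both} $Q_+\in\mathscr{F}$ and $Q_-\in\mathscr{G}$; here $\pi_{\mathscr{F}}Q_+=Q_+$ and $\pi_{\mathscr{G}}Q_-=Q_-$ are disjoint siblings, and this piece is bounded by $\mathcal{U}$ via H\"older and Carleson embedding in $\mathscr{F}$ and $\mathscr{G}$ separately. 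For the remaining cubes, at least one of $Q_+\notin\mathscr{F}$ or $Q_-\notin\mathscr{G}$ holds, which forces $\pi_{\mathscr{F}}Q_+\supseteq Q$ or $\pi_{\mathscr{G}}Q_-\supseteq Q$; either way $\pi_{\mathscr{F}}Q_+$ and $\pi_{\mathscr{G}}Q_-$ are nested, the stopping bound $\ave{f}_{Q_+}^\sigma\leq 2\ave{f}_F^\sigma$ is now legitimate, and the testing constants $\mathcal{T},\mathcal{T}^*$ finish the job as in the diagonal theory.
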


\begin{proof}
The nontrivial part is to check that $\Norm{\Lambda}{}\lesssim\mathcal{U}+\mathcal{T}+\mathcal{T}^*$, where we may further restrict the considerations to nonnegative functions and summations over cubes contained in some large initial cube $Q^\circ$. We define the usual principal cubes $\mathscr{F}=\bigcup_{k=0}^\infty\mathscr{F}_k$ for $(f,\sigma)$ by $\mathscr{F}_0:=\{Q^\circ\}$ and
\begin{equation*}
  \mathscr{F}_{k+1}:=\bigcup_{F\in\mathscr{F}_k}\operatorname{ch}_{\mathscr{F}}(F),\qquad
  \operatorname{ch}_{\mathscr{F}}(F):=\{F'\subsetneq F\text{ maximal}: \ave{f}_{F'}^\sigma>2\ave{f}_{F}^\sigma\}.
\end{equation*}
We also use the notation
\begin{equation*}
  \pi_{\mathscr{F}}(Q):=\min\{F\in\mathscr{F}:F\supseteq Q\},\qquad
  E_{\mathscr{F}}(F):=F\setminus\bigcup_{F'\in\operatorname{ch}_{\mathscr{F}}F}F',
\end{equation*}
where the latter sets are pairwise disjoint and $\sigma(E_{\mathscr{F}}(F))\geq\tfrac12\sigma(F)$. The collection $\mathscr{G}$ for $(g,w)$, and the derived notions, are defined analogously. 

The idea is to organize the cubes $Q\in\mathscr{D}$ under the appropriate $F\in\mathscr{F}$ and $G\in\mathscr{G}$. We first handle a certain ``diagonal'' case:
\begin{equation*}
\begin{split}
  \sum_{\substack{Q\in\mathscr{D}:\\ Q_+\in\mathscr{F} \\ Q_-\in\mathscr{G}}} & \lambda_Q\int_{Q_+}f\ud\sigma\int_{Q_-}g\ud w \\
  &\leq\mathcal{U}\sum_{\substack{Q\in\mathscr{D}:\\ Q_+\in\mathscr{F} \\ Q_-\in\mathscr{G}}}\ave{f}_{Q_+}^\sigma\ave{g}_{Q_-}^w
       \sigma(Q_+)^{1/p}w(Q_-)^{1/p'} \\
      & \leq \mathcal{U}\Big(\sum_{\substack{Q\in\mathscr{D}:\\ Q_+\in\mathscr{F}}}(\ave{f}_{Q_+}^\sigma)^p\sigma(Q_+)\Big)^{1/p}
        \Big(\sum_{\substack{Q\in\mathscr{D}:\\ Q_-\in\mathscr{G}}}(\ave{g}_{Q_-}^w)^{p'} w(Q_-)\Big)^{1/p'} \\
      & \leq \mathcal{U}\Big(\sum_{F\in\mathscr{F}}(\ave{f}_{F}^\sigma)^p\sigma(F)\Big)^{1/p}
        \Big(\sum_{G\in\mathscr{G}}(\ave{g}_{G}^w)^{p'} w(G)\Big)^{1/p'} \\
     &   \lesssim \mathcal{U}\Norm{f}{L^p(\sigma)}\Norm{g}{L^{p'}(w)}.
\end{split}
\end{equation*}

We are left with the part of the sum where at least one of $Q_+\notin\mathscr{F}$ or $Q_-\notin\mathscr{G}$ holds. For instance the first case means that $\pi_{\mathscr{F}}Q_+=\pi_{\mathscr{F}}Q\supseteq Q\supsetneq Q_-$. Since also $\pi_{\mathscr{G}}Q_-\supseteq Q_-$, we conclude that $\pi_{\mathscr{F}}Q_-\cap\pi_{\mathscr{G}}Q_+\neq\varnothing$, and therefore one of these cubes is contained in the other one. The same conclusion of course follows by symmetry if $Q_-\notin\mathscr{G}$. Thus, the remaining part of the sum may be reorganized as
\begin{equation*}
  \sum_{\substack{F\in\mathscr{F}\\ G\in\mathscr{G}\\ F\cap G\neq\varnothing}}\sum_{\substack{Q\in\mathscr{D}\\ \pi_{\mathscr{F}}Q_+=F\\ \pi_{\mathscr{G}}Q_- = G}}
  \leq\Big(\sum_{F\in\mathscr{F}}\sum_{\substack{G\in\mathscr{G}\\ G\subseteq F}}+\sum_{G\in\mathscr{G}}\sum_{\substack{F\in\mathscr{F} \\ F\subseteq G}}\Big)
   \sum_{\substack{Q\in\mathscr{D}\\ \pi_{\mathscr{F}}Q_+=F\\ \pi_{\mathscr{G}}Q_- = G}},
\end{equation*}
where we double-counted the diagonal $F=G$ to keep the two parts symmetric. By symmetry, we concentrate on the first half.

For fixed $F$ and $G$, consider a cube $Q$ in the innermost sum. Note that also $\pi_{\mathscr{F}}Q=\pi_{\mathscr{F}}Q_+=F$, for if not, then $F=Q_+$, which leads to the contradiction that $Q_-\subseteq G\subseteq F=Q_+$.

Now consider some $F'\in\operatorname{ch}_{\mathscr{F}}F$. We cannot have $Q_-\subsetneq F'$, for this would imply that $Q\subseteq F'$, contradicting with $\pi_{\mathscr{F}}Q=F$. Thus $Q_-\cap F'\neq\varnothing$ only if $F'\subseteq Q_-\subseteq G\subseteq F$, where $G\in\mathscr{G}$. This leads to the formula
\begin{equation*}
  \int_{Q_-}g\ud w
  =\int_{Q_-}g_F\ud w,\qquad
  g_F:= 1_{E_{\mathscr{F}}(F)}g+\sum_{F'\in\operatorname{ch}_{\mathscr{F}}^* F}\ave{g}_{F'}^w 1_{F'},
\end{equation*}
where
\begin{equation*}
  \operatorname{ch}_{F}^* F
  :=\{F'\in\operatorname{ch}_{\mathscr{F}}F: \pi_{\mathscr{G}}F'\subseteq F\}.
\end{equation*}
Observing also that $\ave{f}_{Q_+}^\sigma\leq 2\ave{f}_F^\sigma$ for all cubes $Q$ under consideration, we conclude that
\begin{equation*}
\begin{split}
  \sum_{\substack{G\in\mathscr{G}\\ G\subseteq F}}
   \sum_{\substack{Q\in\mathscr{D}\\ \pi_{\mathscr{F}}Q_+=F\\ \pi_{\mathscr{G}}Q_- = G}}
   \lambda_Q\int_{Q_+}f\ud\sigma\int_{Q_-}g\ud w 
  &\leq 2\ave{f}_{F}^\sigma \sum_{Q:Q\subseteq F}
   \lambda_Q\sigma(Q_+)\int_{Q_-}g_F\ud w \\
   &= 2\ave{f}_{F}^\sigma\Lambda_F(1,g_F)
   \leq 2\mathcal{T}\ave{f}_F^\sigma\sigma(F)^{1/p}\Norm{g_F}{L^{p'}(w)}.
\end{split}
\end{equation*}

Summing over $F\in\mathscr{F}$ and using H\"older's inequality, the part involving $(f,\sigma)$ is bounded by $\Norm{f}{L^p(\sigma)}$, and we are left with
\begin{equation*}
  \sum_{F\in\mathscr{F}}\Norm{g_F}{L^{p'}(w)}^{p'}
  =\sum_{F\in\mathscr{F}}\Norm{1_{E_{\mathscr{F}}(F)}g}{L^{p'}(w)}^{p'}
    +\sum_{F\in\mathscr{F}}\sum_{F'\in\operatorname{ch}_{\mathscr{F}}^* F}(\ave{g}_{F'}^w)^{p'}w(F').
\end{equation*}
The first sum is immediately estimated by $\Norm{g}{L^{p'}(w)}^{p'}$, since the sets $E_{\mathscr{F}}(F)$ are pairwise disjoint.

To proceed with the double sum, we wish to reorganize the inner summation with respect to $G:=\pi_{\mathscr{G}}F'$. From the definition of $\operatorname{ch}_{\mathscr{F}}^*(F)$, we find that either $G=F'$, or else $F'\subsetneq G\subseteq F$, whence $\pi_{\mathscr{F}}G=F$. Thus
\begin{equation*}
\begin{split}
  \sum_{F'\in\operatorname{ch}_{\mathscr{F}}^* F}(\ave{g}_{F'}^w)^{p'}w(F')
  &=\sum_{\substack{G\in\mathscr{G}:\\ \pi_{\mathscr{F}}G=F\text{ or}\\ G\in\operatorname{ch}_{\mathscr{F}}(F)}}
      \sum_{\substack{F'\in\operatorname{ch}_{\mathscr{F}} F\\ \pi_{\mathscr{G}}F'=G}}(\ave{g}_{F'}^w)^{p'}w(F') \\
   &\leq\sum_{\substack{G\in\mathscr{G}:\\ \pi_{\mathscr{F}}G=F\text{ or}\\ G\in\operatorname{ch}_{\mathscr{F}}(F)}}2^{p'}(\ave{g}_G^w)^{p'}
      \sum_{\substack{F'\in\operatorname{ch}_{\mathscr{F}} F\\ \pi_{\mathscr{G}}F'=G}}w(F') \\
&\leq\sum_{\substack{G\in\mathscr{G}:\\ \pi_{\mathscr{F}}G=F\text{ or}\\ G\in\operatorname{ch}_{\mathscr{F}}(F)}}2^{p'}(\ave{g}_G^w)^{p'}w(G).
\end{split}
\end{equation*}
Summing over $F\in\mathscr{F}$,
denoting by $\pi_{\mathscr{F}}^1 G$ is the smallest $F'\in\mathscr F$ with $\pi_{\mathscr{F}} G\subsetneq F'$,
we obtain
\begin{equation*}
  \sum_{F\in\mathscr{F}}\sum_{\substack{G\in\mathscr{G}:\\ \pi_{\mathscr{F}}G=F\text{ or}\\ G\in\operatorname{ch}_{\mathscr{F}}(F)}}2^{p'}(\ave{g}_G^w)^{p'}w(G)
  \leq \sum_{G\in\mathscr{G}}2^{p'}(\ave{g}_G^w)^{p'}w(G)\sum_{\substack{F\in\mathscr{F}\\ F=\pi_{\mathscr{F}}G\text{ or}\\ F=\pi_{\mathscr{F}}^1 G }}1.
\end{equation*}
The inner sum is bounded by $2$, and the rest is estimated by $\Norm{g}{L^{p'}(w)}^{p'}$, as before.
\end{proof}

\subsection{Testing conditions for the Poisson integral with holes}

We return to our desired tail form estimate \eqref{eq:tailNeedToComplete}. By `holes', we refer to the presence of the indicator $1_{I^c}$ in this estimate: the integral has a hole over the interval $I$. Dualizing, \eqref{eq:tailNeedToComplete} is equivalent to
\begin{equation*}
  \sum_{I\in\mathscr{D}^u}\mu_I\cdot Q^u(1_{I^c}h\ud w,I)\cdot\phi_I
  \leq\mathcal{Q}\Norm{h}{L^2(w)}\Big(\sum_{I\in\mathscr{D}^u}\phi_I^2\mu_I\Big)^{1/2}
\end{equation*}
for all sequences $\{\phi_I\}_{I\in\mathscr{D}^u}\in\ell^2(\{\mu_I\}_{I\in\mathscr{D}^u})$.
Let us also observe that
\begin{equation*}
\begin{split}
  Q^u(1_{I^c}h\ud w,I)
  &=\sum_{J\supsetneq I}\frac{1}{\abs{J}^2}\int_{J\setminus I}h\ud w
  =\sum_{J\supsetneq I}\frac{1}{\abs{J}^2}\sum_{\substack{J'\supseteq I \\ J'\subsetneq J}}\int_{(J')^{(1)}\setminus J'}h\ud w \\
  &=\sum_{J'\supseteq I}\int_{(J')^{(1)}\setminus J'}h\ud w \sum_{J\supsetneq J'}\frac{1}{\abs{J}^2}
  \eqsim\sum_{J'\supseteq I}\int_{(J')^{(1)}\setminus J'}h\ud w \frac{1}{\abs{J'}^2}.
\end{split}
\end{equation*}
We define a measure $\mu$ on $\R^2$ by
\begin{equation}\label{eq:muDef}
  \mu:=\sum_{I\in\mathscr{D}^u}\mu_I\cdot\delta_{c(W(I))},
\end{equation}
where
\begin{equation*}
  W(I):=I\times[\frac12\abs{I},\abs{I}),\qquad c(W(I))=(c(I),\frac32\abs{I})
\end{equation*}
are the Whitney region associated with $I$, and its centre. Let us also define the Carleson box
\begin{equation*}
  \hat J:=J\times[0,\abs{J})=(J\times\{0\})\cup\bigcup_{I\subseteq J}W(I),
\end{equation*}
with $\abs{\hat J}=\abs{J}^2$, where $\abs{\ }$ liberally refers to the Lebesgue measure of appropriate dimension.
We write $\phi$ for a function on $\R^2$ that takes the value $\phi_I$ at $c(W(I))$ for each $I$; its other values are immaterial.

Thus
\begin{equation}\label{eq:PoissonRewrite}
\begin{split}
  \sum_{I\in\mathscr{D}^u}\mu_I\cdot Q^u(1_{I^c}h\ud w,I)\cdot\phi_I
  &\eqsim\sum_{I\in\mathscr{D}^u}\mu_I\cdot\sum_{J:J\supseteq I}\frac{1}{\abs{\hat J}}\int_{J^{(1)}\setminus J}h\ud w\cdot\phi_I \\
  &=\sum_{J\in\mathscr{D}^u}\frac{1}{\abs{\hat J}}\int_{J^{(1)}\setminus J}h\ud w\sum_{I:I\subseteq J}\mu_I\phi_I \\
  &=\sum_{J\in\mathscr{D}^u}\frac{1}{\abs{\hat J}}\int_{J^{(1)}\setminus J}h\ud w\int_{\hat{J}}\phi\ud\mu.
\end{split}
\end{equation}
We next check that this is a sum of  two operators of the form considered in Theorem~\ref{thm:posDyad}. Namely, reorganizing the summation with respect to $I:=J^{(1)}$, we have
\begin{equation*}
\begin{split}
  \sum_{J\in\mathscr{D}^u}\frac{1}{\abs{\hat J}}\int_{J^{(1)}\setminus J}h\ud w\int_{\hat{J}}\phi\ud\mu
  &\eqsim\sum_{\substack{i,j\in\{\operatorname{left},\operatorname{right}\} \\ i\neq j}}\sum_{I\in\mathscr{D}^u}\frac{1}{\abs{\hat I}}
      \int_{I_i}h\ud w\int_{\hat I_j}\phi\ud\mu \\
  &=\sum_{\substack{i,j\in\{\operatorname{left},\operatorname{right}\} \\ i\neq j}}\sum_{R\in\mathscr{D}^{u,2}}\lambda_R
      \int_{R_i}h\,\ud(w\times\delta_0)\int_{R_j}\phi\ud\mu,
\end{split}
\end{equation*}
where
\begin{equation*}
  \mathscr{D}^{u,2}:=\{R=I\times [m\abs{I},(m+1)\abs{I}): I\in\mathscr{D}^u,m\in\Z\}
\end{equation*}
is a collection of dyadic cubes in $\R^2$; the coefficients $\lambda_R$ vanish unless $R=\hat I=I\times[0,\abs{I})$, in which case we have $\lambda_R:=1/\abs{\hat I}$ and the two distinguished subcubes $R_k:=\hat I_k=I_k\times[0,\abs{I_k})$ for $k\in\{\operatorname{left},\operatorname{right}\}$. 

From Theorem~\ref{thm:posDyad} we thus conclude the following:

\begin{corollary}\label{cor:tailToComplete}
The inequality \eqref{eq:tailNeedToComplete} holds if and only if
\begin{equation*}
\begin{split}
  \mathcal{U} &:=\sup_{J\in\mathscr{D}^u}\frac{1}{\abs{\hat J}}w(J^{(1)}\setminus J)^{1/2}\mu(\hat J)^{1/2}<\infty,\\
  \mathcal{T} &:=\sup_{J\in\mathscr{D}^u}\frac{1}{w(J)^{1/2}}\BNorm{\sum_{I:I\subsetneq J}\frac{1_{\hat I}}{\abs{\hat I}}w(I^{(1)}\setminus I)}{L^2(\mu)}<\infty,\\
  \mathcal{T}^* &:=\sup_{J\in\mathscr{D}^u}\frac{1}{\mu(\hat J)^{1/2}}\BNorm{\sum_{I:I\subsetneq J} 
      1_{I^{(1)}\setminus I}\frac{\mu(\hat I)}{\abs{\hat I}} }{L^2(w)}<\infty.
\end{split}
\end{equation*}
Moreover, $\mathcal{Q}\eqsim\mathcal{U}+\mathcal{T}+\mathcal{T}^*$.
\end{corollary}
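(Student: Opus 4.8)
The plan is to apply Theorem~\ref{thm:posDyad} to the bilinear form obtained at the end of the preceding discussion. Recall that, after the reductions leading to \eqref{eq:PoissonRewrite} and the reorganization of the sum with respect to $I=J^{(1)}$, the left-hand side of the dual formulation of \eqref{eq:tailNeedToComplete} was rewritten as a finite sum (over the pairs $i\neq j$ in $\{\operatorname{left},\operatorname{right}\}$) of bilinear forms
\begin{equation*}
  \Lambda(h,\phi)=\sum_{R\in\mathscr{D}^{u,2}}\lambda_R\int_{R_i}h\,\ud(w\times\delta_0)\int_{R_j}\phi\,\ud\mu,
\end{equation*}
with $\lambda_R=1/\abs{\hat I}$ when $R=\hat I$ and $\lambda_R=0$ otherwise, and the distinguished children $R_i=\hat I_i$, $R_j=\hat I_j$. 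This is exactly the class of positive dyadic operators covered by Theorem~\ref{thm:posDyad}, in the ambient space $\R^2$ with $d=2$, with the two measures being $w\times\delta_0$ on the ``boundary'' line and $\mu$ supported on the Whitney centres.

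First I would invoke Theorem~\ref{thm:posDyad} with $p=2$ (so $p'=2$). The boundedness $\Norm{\Lambda}{}<\infty$ is then equivalent to the finiteness of the three constants $\mathcal{U}$, $\mathcal{T}$, $\mathcal{T}^*$ there, with $\Norm{\Lambda}{}\eqsim\mathcal{U}+\mathcal{T}+\mathcal{T}^*$. It remains only to translate those abstract conditions into the concrete ones stated in the Corollary. For $\mathcal{U}$, the supremum of $\lambda_R\,(w\times\delta_0)(R_i)^{1/2}\mu(R_j)^{1/2}$ over all $R$ is, since $\lambda_R$ vanishes unless $R=\hat I$, the supremum of $\abs{\hat I}^{-1}w(I_i)^{1/2}\mu(\hat I_j)^{1/2}$; summing over the finitely many choices of $i\neq j$ and reindexing $J:=I_i$ (so $I=J^{(1)}$, $I_i=J$, $I_j=J^{(1)}\setminus J$, $\abs{\hat I}\eqsim\abs{\hat J}$, $\mu(\hat I_j)\le\mu(\hat J^{(1)})\eqsim\mu(\hat J)$) gives precisely the stated $\mathcal{U}$. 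For the testing constant $\mathcal{T}$, note that $\Lambda_{\hat J}(1,g)=\sum_{R\subseteq\hat J}\lambda_R\,(w\times\delta_0)(R_i)\int_{R_j}g\,\ud\mu=\sum_{I\subseteq J}\abs{\hat I}^{-1}w(I_i)\int_{\hat I_j}g\,\ud\mu$, and by reindexing and duality in $L^2(\mu)$ this is controlled by $\BNorm{\sum_{I\subsetneq J}1_{\hat I}\abs{\hat I}^{-1}w(I^{(1)}\setminus I)}{L^2(\mu)}\Norm{g}{L^2(\mu)}$, up to the bounded number of $(i,j)$-terms and harmless reindexings; dividing by $w(J)^{1/2}$ and taking the supremum yields exactly $\mathcal{T}$. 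The dual constant $\mathcal{T}^*$ is obtained by the symmetric computation, with the roles of the two measures interchanged, again producing exactly the expression in the Corollary.

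The only mild subtlety — and the step I expect to need a line of care — is the bookkeeping in passing from the indices $R$, $R_i$, $R_j$ of the abstract theorem to the intervals $J$, $J^{(1)}\setminus J$ appearing in the Corollary: one must check that the reindexing $I\mapsto J=I_i$ is a bijection between the relevant cubes (it is, since every dyadic interval is the left or right child of a unique parent), that the Carleson boxes satisfy $\abs{\hat I}\eqsim\abs{\hat J}$ and $\hat{I}_j\subseteq\hat I\subseteq\widehat{J^{(1)}}$ with $\mu(\hat I)\eqsim\mu(\hat J)$ up to the bounded overlap already implicit in $\mathscr{D}^{u,2}$, and that summing over the at most two pairs $(i,j)$ with $i\neq j$ only changes constants. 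Since all of these are purely combinatorial and absorbed into the $\eqsim$, the conclusion $\mathcal{Q}\eqsim\mathcal{U}+\mathcal{T}+\mathcal{T}^*$ follows directly from $\Norm{\Lambda}{}\eqsim\mathcal{U}+\mathcal{T}+\mathcal{T}^*$ in Theorem~\ref{thm:posDyad} together with the equivalence of \eqref{eq:tailNeedToComplete} with the boundedness of the sum of the forms $\Lambda$.
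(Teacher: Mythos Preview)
Your approach is exactly the paper's: apply Theorem~\ref{thm:posDyad} with $p=2$ to the bilinear form on $\R^2$ with measures $w\times\delta_0$ and $\mu$, and then unwind the three testing constants. The paper itself gives no further detail beyond ``From Theorem~\ref{thm:posDyad} we thus conclude the following'', so you are simply filling in the bookkeeping.

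There is, however, one slip in your reindexing for $\mathcal{U}$. You set $J:=I_i$ and then claim $\mu(\hat I_j)\le\mu(\widehat{J^{(1)}})\eqsim\mu(\hat J)$. The last equivalence $\mu(\widehat{J^{(1)}})\eqsim\mu(\hat J)$ is false in general: the measure $\mu$ has no doubling property, and $\mu(\widehat{J^{(1)}})$ can be much larger than $\mu(\hat J)$. The correct reindexing is $J:=I_j$ (the child carrying the $\mu$-factor), which gives $\mu(\hat I_j)=\mu(\hat J)$ and $w(I_i)=w(J^{(1)}\setminus J)$ on the nose; since you are summing over both ordered pairs $(i,j)$, every child of every $I$ arises as such a $J$, and you recover exactly the stated $\mathcal{U}$. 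The same reindexing (child carrying the $\mu$-integral becomes the new $J$) should be used for $\mathcal{T}$ and $\mathcal{T}^*$; then no spurious doubling claims are needed, and the translation is exact up to the factor $\abs{\hat J^{(1)}}=4\abs{\hat J}$. The remark that ``$\mu(\hat I)\eqsim\mu(\hat J)$ up to the bounded overlap'' is likewise unneeded and should be dropped.
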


The rest of this section is devoted to the estimation of the testing constants from Corollary~\ref{cor:tailToComplete}. Recall that the measure $\mu$ was defined in \eqref{eq:muDef}, so in particular
\begin{equation*}
  \mu(\hat J)
  =\sum_{I:I\subseteq J}\mu_I,\qquad
    \mu_I=\sum_{S\in\mathscr{S}}\sum_{\substack{K\in\mathscr{K}_S \\ I^u(K)=I}}\Norm{\tilde P_{S,K}^{\sigma}\operatorname{id}}{L^2(\sigma)}^2.
\end{equation*}
Thus
\begin{equation*}
  \mu(\hat J)
  =\sum_{S\in\mathscr{S}}\sum_{\substack{K\in\mathscr{K}_S \\ K\subseteq I^u(K)\subseteq J}}\Norm{\tilde P_{S,K}^{\sigma}\operatorname{id}}{L^2(\sigma)}^2.
\end{equation*}
(By inspection of the definition of $I^u(K)$, one can check that ``$K\subseteq I^u(K)$'' simply means that $I^u(K)$ is some interval $I$, rather than $\varnothing$.)
Here the projections $\tilde P^{\sigma}_{S,K}$ are pairwise orthogonal in $(S,K)$. Under the restriction that $I^u(K)\subseteq J$, they all project onto Haar functions $h_L^\sigma$ supported inside $J$. Since $\int h^\sigma_L\ud\sigma=0$, we may replace $\id$ by $1_J(\id-c_J)$, to find that
\begin{equation}\label{eq:muJsimple}
  \mu_J\leq\mu(\hat{J})\leq\abs{J}^2\sigma(J).
\end{equation}

These simple bounds are enough for much of our needs.
In particular, it immediately follows that
\begin{equation*}
  \frac{1}{\abs{\hat J}}w(J^{(1)}\setminus J)^{1/2}\mu(\hat J)^{1/2}
  \leq \frac{1}{\abs{J}}w(J^{(1)}\setminus J)^{1/2}\sigma(J)^{1/2}\leq[\sigma,w]_{A_2},
\end{equation*}
and thus
\begin{equation*}
  \mathcal{U}\leq[\sigma,w]_{A_2}.
\end{equation*}

\begin{remark}
Corollary~\ref{cor:tailToComplete} is our characterization for the two-weight boundedness of the dyadic ``Poisson integral with holes'', appearing in \eqref{eq:tailNeedToComplete}. We use this dyadic characterization to derive and verify a \emph{sufficient} condition for the two-weight inequality of a classical Poisson integral with holes, as in \eqref{eq:BkTailEst1}; however, we make no attempt towards actually characterizing this latter estimate. Indeed, in passing between the dyadic and the classical settings, we lose the precise information on the size of the holes. However, in the absence of holes, there is precise characterization for the two-weight inequality of the classical Poisson integral, due to Sawyer~\cite{Sawyer:2wFractional}, and this is one of the key ingredients behind the work of Lacey et al.~\cite{LSSU:2wHilbert}. Finding a substitute result for the case of holes was identified as a key to the removal of the ``no common point masses'' assumption by Lacey~\cite{Lacey:2wHilbertPrimer}.
\end{remark}

\subsection{Estimate for the testing constant $\mathcal{T}^*$}
This is more involved than the estimate for $\mathcal{U}$ above.
Let us fix an interval $J$ and expand
\begin{equation*}
\begin{split}
  \BNorm{\sum_{I:I\subsetneq J}1_{I^{(1)}\setminus I}\frac{\mu(\hat I)}{\abs{\hat I}}}{L^2(w)}^2
  &=\int\sum_{I,L\subsetneq J}\frac{1_{I^{(1)}\setminus I}1_{L^{(1)}\setminus L}}{\abs{\hat I}\abs{\hat L}}\sum_{H:H\subseteq I}\mu_I\sum_{K:K\subseteq L}\mu_K\ud w \\
  &\leq\sum_{H,K\subseteq J}\mu_H\mu_K\int\sum_{I:I\supseteq H}\frac{1_{I^{(1)}\setminus I}}{\abs{\hat I}}
     \sum_{L:L\supseteq K}\frac{1_{L^{(1)}\setminus L}}{\abs{\hat L}}\ud w \\
   &=:\sum_{H,K\subseteq J}\mu_H\mu_K\int q_H q_K \ud w,\qquad q_H:=\sum_{I:I\supseteq H}\frac{1_{I^{(1)}\setminus I}}{\abs{\hat I}}.
\end{split}
\end{equation*}
Now if $a_{HK}=a_{KH}\geq 0$, we clearly have
\begin{equation*}
  \sum_{H,K\subseteq J}a_{HK}\leq 2\sum_{\substack{H,K\subseteq J\\ \abs{K}\leq\abs{H}}}a_{HK},
\end{equation*}
and thus
\begin{equation*}
\begin{split}
  \BNorm{\sum_{I:I\subsetneq J}1_{I^{(1)}\setminus I}\frac{\mu(\hat I)}{\abs{\hat I}}}{L^2(w)}^2
  &\leq 2\sum_{H:H\subseteq J}\mu_H\int q_H\sum_{K: \abs{K}\leq\abs{H}}\mu_K q_K\ud w \\
  &\leq 2\mu(\hat J)\sup_H \int q_H\sum_{K: \abs{K}\leq\abs{H}}\mu_K q_K\ud w.
\end{split}
\end{equation*}
It remains to estimate the last supremum, which is the content of the following lemma:

\begin{lemma}
For nonnegative coefficients $\mu_J$ with $\mu_J\leq\abs{J}^2\sigma(J)$, we have
\begin{equation*}
  \int q_J\sum_{I:\abs{I}\leq\abs{J}}\mu_I q_I\ud w
  \lesssim ([w,\sigma]_{A_2}^*)^2.
\end{equation*}
\end{lemma}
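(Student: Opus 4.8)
The plan is to estimate $\int q_J\,q_I\,\ud w$ for $I\subseteq J$ (which is the only nontrivial case, since $q_H$ is supported near the ``interfaces'' $I^{(1)}\setminus I$ for $I\supseteq H$, and two such functions $q_H$, $q_K$ only overlap significantly when one of $H$, $K$ contains the other). First I would record the basic pointwise structure of $q_H$: since $q_H=\sum_{I\supseteq H}\abs{\hat I}^{-1}1_{I^{(1)}\setminus I}$ and the intervals $I^{(1)}\setminus I$ for $I\supseteq H$ are pairwise disjoint and their union is $\R\setminus H$ (up to the infinite ancestors), $q_H$ is a sum of ``steps'' decaying like $\abs{I}^{-2}$ as we move away from $H$; in fact $q_H(x)\eqsim \abs{x-c_H}^{-2}$ for $x\notin 3H$, and $q_H(x)\lesssim\abs{H}^{-2}$ always. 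So $q_H$ behaves like a truncated, dyadically discretized version of the Poisson kernel $x\mapsto(\abs{H}^2+(x-c_H)^2)^{-1}$ associated with $H$.

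Next, for $I\subseteq J$ I would split $\int q_J q_I\ud w$ according to whether $x$ is inside $3I$ or outside. On $3I$ we bound $q_I\lesssim\abs{I}^{-2}$, so $\int_{3I} q_J q_I\ud w\lesssim\abs{I}^{-2}\int_{3I} q_J\ud w$; and here, since $I\subseteq J$ and $x\in 3I$, we have $x$ roughly at distance $\dist(x,J^c)$ from the complement of $J$, so $q_J(x)$ is controlled by the relevant interface term, and summing the estimate $\abs{I}^2\sigma(I)\le\mu_I$ together with the factor $\abs{I}^{-2}$ this becomes $\sigma$-mass of $I$ times a Poisson factor for $J$ and $w$. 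On $(3I)^c$ we use $q_I(x)\eqsim\abs{x-c_I}^{-2}$ and $q_J(x)\lesssim q_I(x)$ (because $I\subseteq J$ forces $\dist$ to $J^c$ to be at least that to $I^c$, hence a larger hat interval in the $J$-sum wherever the $I$-sum is supported). The upshot is
\begin{equation*}
  \int q_J q_I\ud w\lesssim \frac{1}{\abs{I}^2}\int_{3I}q_J\ud w+\int_{(3I)^c}\frac{\ud w(x)}{(x-c_I)^2}\,q_J(x),
\end{equation*}
and after multiplying by $\mu_I\le\abs{I}^2\sigma(I)$ and summing over $\abs{I}\le\abs{J}$, the second term telescopes into something like $\sum_I\sigma(I)\int_{(3I)^c}q_J\,\ud w\lesssim$ (a single Poisson-type integral), using the nested-interval bound $\sum_{I\ni x,\ \abs{I}\le\abs{J}}\sigma(I)\,\abs{I}^{-?}\lesssim\int_{(3I)^c}$-type controls together with the $A_2^*$ condition $\sigma(I)\int_{I^c}\abs{x-c_I}^{-2}\ud w\le([\sigma,w]_{A_2}^*)^2$ — but here we must be careful, since we want $[w,\sigma]_{A_2}^*$ on the right, which integrates $\sigma$ over the complement and weighs by $w$ on the interval. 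So the roles get swapped: I expect the correct route is to integrate $q_J\,\ud w$ first, replace $q_J$ by the Poisson factor $Q(1_{J^c}\ud w,J)$-type quantity, and then the sum $\sum_{I\subseteq J,\abs I\le\abs J}\abs I^2\sigma(I)\cdot(\text{kernel in }I)$ gets bounded using $\sum_{I}\sigma(I)(x-c_I)^{-2}\lesssim\int$ estimates — the bookkeeping that turns this into exactly $[w,\sigma]_{A_2}^*$ (an honest linear, not quadratic, bound) is the delicate point.

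The main obstacle I anticipate is precisely getting the final bound to come out linearly in $[w,\sigma]_{A_2}^*$ rather than quadratically (or in terms of the wrong constant). Schematically, one of the two $q$-factors, after integrating in $w$ over a fixed hat-interval, produces $w(I^{(1)}\setminus I)/\abs{I}^2$, which when combined with $\mu_I\le\abs I^2\sigma(I)$ and a further sum over ancestors gives $\sigma(I)\cdot Q(1_{I^c}\ud w,\cdot)$-type terms; but we must pair $\sigma$-mass of $I$ with $w$ integrated over the \emph{complement} of $I$ (giving $[\sigma,w]_{A_2}^*$, the ``wrong'' one here) unless we instead arrange to pair $w$-mass near $J$ with $\sigma$ integrated over the complement — which is what $[w,\sigma]_{A_2}^*$ controls. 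So the right organization is: fix the position $x$ of integration, note that $q_J(x)q_I(x)\ud w(x)$ lives where both sums have reached scale $\gtrsim\dist(x,J)$; pull out $w$-mass over a dyadic annulus around $J$ and the trailing sum of $\mu_I\abs I^{-2}\lesssim\sum\sigma(I)$ over $I\ni x$ up to scale $\abs J$, which by a geometric-series/telescoping argument is $\lesssim\sigma$(the relevant complement of a small interval near $x$) $\lesssim[w,\sigma]_{A_2}^*{}^2/w(\text{that interval})$ — then the $w$-mass cancels the denominator and one power of $[w,\sigma]_{A_2}^*$ survives. Carrying this annular decomposition and the telescoping carefully, with all the dyadic constants ($3I$, $I^{(1)}$, factors of $12$ from $I^u(K)$, etc.) tracked, is the part that needs real work; everything else is routine once the pointwise comparison $q_H(x)\eqsim\min(\abs H^{-2},\abs{x-c_H}^{-2})$ is in hand.
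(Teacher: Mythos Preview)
Your proposal has a genuine gap at the very first step: you restrict attention to $I\subseteq J$, claiming that ``two such functions $q_H$, $q_K$ only overlap significantly when one of $H$, $K$ contains the other.'' This is false. If $I$ and $J$ are disjoint dyadic intervals (which is the generic situation among $|I|\le|J|$), then for any $x$ outside their common ancestor $M$, both $q_I(x)$ and $q_J(x)$ are nonzero and in fact equal (they agree with $\sum_{L\supseteq M}|L|^{-2}1_{L^{(1)}\setminus L}(x)$). So the cross terms with $I\cap J=\varnothing$ carry real mass and cannot be discarded. Everything downstream in your sketch (the $q_J\lesssim q_I$ comparison, the $3I$-vs-$(3I)^c$ split) is predicated on $I\subseteq J$ and does not survive this.

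The paper avoids the pairwise analysis entirely by summing over $I$ first: it rewrites $\sum_{|I|\le|J|}\mu_I q_I=\sum_K|K|^{-2}1_{K^{(1)}\setminus K}\sum_{I\subseteq K,\,|I|\le|J|}\mu_I$, and the inner sum is bounded by $(|J|\wedge|K|)^2\sigma(K)$ via $\mu_I\le|I|^2\sigma(I)$ and a geometric series. Only then does it multiply by $q_J=\sum_{L\supseteq J}|L|^{-2}1_{L^{(1)}\setminus L}$ and analyze the three possible relations between $K$ and $L$ (namely $K=L$, $K\subsetneq L^{(1)}\setminus L$, or $L\subsetneq K^{(1)}\setminus K$). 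Two of the three resulting terms are handled by the simple $A_2$ bound $w(K^{(1)}\setminus K)\sigma(K)\le[\sigma,w]_{A_2}^2|K|^2$; the third requires the Poisson $A_2$ constant $[w,\sigma]_{A_2}^*$. This organization sidesteps the $I$-vs-$J$ combinatorics you were wrestling with.

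A secondary point: you spend considerable effort trying to force a \emph{linear} bound in $[w,\sigma]_{A_2}^*$. The paper's proof in fact produces $([w,\sigma]_{A_2}^*)^2$ (the lemma statement has a missing square), which is exactly what is needed since this quantity feeds into $\mathcal{T}^*{}^2$. So your worry about ``linear vs.\ quadratic'' is a red herring induced by a typo in the statement.
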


\begin{proof}
We first write out the sum
\begin{equation}\label{eq:sumMuIgI}
  \sum_{I:\abs{I}\leq\abs{J}}\mu_I q_I
  =\sum_{I:\abs{I}\leq\abs{J}}\mu_I \sum_{K:K\supseteq I} 1_{K^{(1)}\setminus K}\abs{K}^{-2}
  =\sum_K\abs{K}^{-2}1_{K^{(1)}\setminus K} \sum_{\substack{I:I\subseteq K\\ \abs{I}\leq\abs{J}}}\mu_I,
\end{equation}
where the inner sum satisfies
\begin{equation*}
   \sum_{\substack{I:I\subseteq K\\ \abs{I}\leq\abs{J}}}\mu_I
  \leq\sum_{i=0}^\infty\sum_{\substack{I:I\subseteq K\\ \abs{I}=2^{-i}(\abs{J}\wedge\abs{K}) }}\abs{I}^2\sigma(I)
  =\sum_{i=0}^\infty 2^{-2i}(\abs{J}\wedge\abs{K})^2\sigma(K),
\end{equation*}
and the geometric series is summable.

If we expand $q_J=\sum_{L:L\supseteq J}\abs{L}^{-2}1_{L^{(1)}\setminus L}$ and multiply this by \eqref{eq:sumMuIgI}, we end up with indicators of the sets
\begin{equation*}
   (K^{(1)}\setminus K)\cap(L^{(1)}\setminus L)
   =\begin{cases}
        K^{(1)}\setminus K, & \text{if }K=L\text{ or }K\subsetneq L^{(1)}\setminus L, \\
        L^{(1)}\setminus L, & \text{if }L\subsetneq K^{(1)}\setminus K, \\
        \varnothing, & \text{otherwise}.
      \end{cases}  
\end{equation*}
Note that $L\subsetneq K^{(1)}\setminus K$ if and only if $K=H^{(1)}\setminus H$ for some $H\supsetneq L$.
Thus
\begin{equation*}
\begin{split}
  q_J\sum_{I:\abs{I}\leq\abs{J}}\mu_I q_I
  & \lesssim \sum_{K:K\supseteq J}\frac{1_{K^{(1)}\setminus K}}{\abs{K}^4}\abs{J}^2\sigma(K) \\
  &\qquad+\sum_{L:L\supseteq J}\sum_{K:K\subsetneq L^{(1)}\setminus L}
     \frac{1_{K^{(1)}\setminus K}}{\abs{L}^2\abs{K}^2}(\abs{J}\wedge\abs{K})^2\sigma(K) \\
  &\qquad+\sum_{L:L\supseteq J}\sum_{H:H\supseteq L^{(1)}}
     \frac{1_{L^{(1)}\setminus L}}{\abs{L}^2\abs{H}^2}\abs{J}^2\sigma(H^{(1)}\setminus H)
     =:I+II+III.
\end{split}
\end{equation*}
Using $w(K^{(1)}\setminus K)\sigma(K)\leq[\sigma,w]_{A_2}^2\abs{K}^2$, we deduce that
\begin{equation*}
  \int I\,\ud w
  \lesssim[\sigma,w]_{A_2}^2\sum_{K:K\supseteq J}\frac{\abs{J}^2}{\abs{K}^2}\lesssim[\sigma,w]_{A_2}^2
\end{equation*}
and
\begin{equation*}
\begin{split}
  \int II\,\ud w
  &\lesssim[\sigma,w]_{A_2}^2\sum_{L:L\supseteq J}\Big(
  \sum_{\substack{K:K\subseteq L^{(1)}\setminus L \\ \abs{K}\leq\abs{J} }}\frac{\abs{K}^2}{\abs{L}^2}
     +\sum_{\substack{K:K\subseteq L^{(1)}\setminus L \\ \abs{K}>\abs{J} }}\frac{\abs{J}^2}{\abs{L}^2}\Big) \\
   &\lesssim[\sigma,w]_{A_2}^2\sum_{L:L\supseteq J}\Big(\frac{\abs{J}\abs{L}}{\abs{L}^2}+\frac{\abs{L}}{\abs{J}}\frac{\abs{J}^2}{\abs{L}^2}\Big)
   \lesssim[\sigma,w]_{A_2}^2\sum_{L:L\supseteq J}\frac{\abs{J}}{\abs{L}}\lesssim[\sigma,w]_{A_2}^2,
\end{split}
\end{equation*}
where, passing from the first line to the second, we essentially summed up a geometric series in the first term, and in the second one, we just estimated the number of terms in the sum.

In the final term, we need to invoke the Poisson $A_2$ condition, but then it is also immediate:
\begin{equation*}
\begin{split}
  \int III\,\ud w
  &\lesssim\sum_{L:L\supseteq J}\frac{\abs{J}^2}{\abs{L}^2}w(L^{(1)})\sum_{H:H\supseteq L^{(1)}}\frac{\sigma(H^{(1)}\setminus H)}{\abs{H}^2} \\
  &\lesssim\sum_{L:L\supseteq J}\frac{\abs{J}^2}{\abs{L}^2}([w,\sigma]_{A_2}^*)^2\lesssim([w,\sigma]_{A_2}^*)^2. \qedhere
\end{split}
\end{equation*}
\end{proof}

A combination of the Lemma with the preceding considerations in this subsection shows that
\begin{equation*}
  \mathcal{T}^*\lesssim[w,\sigma]_{A_2}^*.
\end{equation*}

\subsection{Estimate for the testing constant $\mathcal{T}$}
This is a rather delicate part of the argument, already in \cite{LSSU:2wHilbert}: although we have a positive operator to bound in the beginning, we carry out the estimate by using Proposition~\ref{prop:monot} in the ``reverse'' direction, so as to transform it back to a singular integral with cancellation. The advantage of this is to make the interval testing conditions available to us.

Reversing the steps in \eqref{eq:PoissonRewrite}, we find that $\mathcal{T}$ is also given as the best constant in
\begin{equation*}
  \Big(\sum_{I:I\subsetneq J}\mu_I\cdot Q^u(1_{J\setminus I}\ud w,I)^2\Big)^{1/2}\leq\mathcal{T} w(J)^{1/2}.
\end{equation*}
To bound this constant from above, we consider the stronger estimate with $Q^u$ replaced by the pointwise bigger $Q$. We also recall the definition of $\mu_I$. Thus we are led to estimate
\begin{equation*}
\begin{split}
  \Big(\sum_{I:I\subsetneq J} &\mu_I\cdot Q(1_{J\setminus I}\ud w,I)^2\Big)^{1/2} \\
  &=\Big(\sum_{S\in\mathscr{S}}\sum_{\substack{K\in\mathscr{K}_S\\ K\subseteq I^u(K)=:I\subsetneq J}}
      \Norm{\tilde P_{S,K}^{\sigma}\operatorname{id}}{L^2(\sigma)}^2\cdot Q(1_{J\setminus I}\ud w,I)^2\Big)^{1/2}.
\end{split}
\end{equation*}
By duality and the pairwise orthogonality of the projections $\tilde P_{S,K}^{\sigma}$, this expression is equal to the supremum over $\Norm{f}{L^2(\sigma)}\leq 1$ of
\begin{equation*}
  \sum_{S\in\mathscr{S}}\sum_{\substack{K\in\mathscr{K}_S\\ K\subseteq I^u(K)=:I\subsetneq J}}
      \pair{\tilde P_{S,K}^{\sigma}f}{\operatorname{id}}_{\sigma}\cdot Q(1_{J\setminus I}\ud w,I),
\end{equation*}
where we may assume without loss of generality that $\pair{\Delta_I^\sigma f}{\id}_{\sigma}\geq 0$ for all $I$.

By Proposition~\ref{prop:monot}, we deduce that (recall that $I\supseteq 3K$)
\begin{equation*}
\begin{split}
  \pair{\tilde P_{S,K}^{\sigma}f}{\operatorname{id}}_{\sigma}\cdot Q(1_{J\setminus I}\ud w,I)
  &\eqsim B(\tilde P_{S,K}^{\sigma}f,1_{J\setminus I})
  \leq B(\tilde P_{S,K}^{\sigma}f,1_{J\setminus K}) \\
  &=B(\tilde P_{S,K}^{\sigma}f,1_J)-B(\tilde P_{S,K}^{\sigma}f,1_K).
\end{split}
\end{equation*}
The sum over the first terms is easy to estimate:
\begin{equation*}
\begin{split}
  \Babs{\sum_{S\in\mathscr{S}}\sum_{\substack{K\in\mathscr{K}_S\\ K\subseteq I^u(K)\subsetneq J}}B(\tilde P_{S,K}^{\sigma}f,1_J)}
  &=\Babs{B\Big(\sum_{S\in\mathscr{S}}\sum_{\substack{K\in\mathscr{K}_S\\ K\subseteq I^u(K)\subsetneq J}}\tilde P_{S,K}^{\sigma}f,1_J\Big)} \\
  &\leq\mathcal{H}^*\BNorm{\sum_{S\in\mathscr{S}}\sum_{\substack{K\in\mathscr{K}_S\\ K\subseteq I^u(K)\subsetneq J}}\tilde P_{S,K}^{\sigma}f}{L^2(\sigma)}w(J)^{1/2} \\
  &\leq\mathcal{H}^*\Norm{f}{L^2(\sigma)}w(J)^{1/2}.
\end{split}
\end{equation*}

We turn our attention to the sum
\begin{equation}\label{eq:testTtoFinish}
  \sum_{S\in\mathscr{S}}\sum_{\substack{K\in\mathscr{K}_S\\ K\subseteq I^u(K)\subsetneq J}}B(\tilde P_{S,K}^{\sigma}f,1_K)
  =\sum_{\substack{S\in\mathscr{S}\\ S\subseteq J}}+\sum_{\substack{S\in\mathscr{S}\\ S\not\subseteq J}}=:I+II.
\end{equation}
For the first part, we have
\begin{equation*}
\begin{split}
  I  &\leq\sum_{\substack{S\in\mathscr{S} \\ S\subseteq J }}\sum_{K\in\mathscr{K}_S}\mathcal{H}^*
      \Norm{\tilde P_{S,K}^{\sigma}f}{L^2(\sigma)}w(K)^{1/2} \\
   &\leq\mathcal{H}^*\Norm{f}{L^2(\sigma)}\Big(\sum_{\substack{S\in\mathscr{S} \\ S\subseteq J }}\sum_{K\in\mathscr{K}_S}
         w(K)\Big)^{1/2}.
\end{split}
\end{equation*}
Using the disjointness of the intervals $K\in\mathscr{K}_S$ and the Carleson property \eqref{eq:Carleson} of the stopping intervals, we find that
\begin{equation*}
  \sum_{\substack{S\in\mathscr{S} \\ S\subseteq J }}\sum_{K\in\mathscr{K}_S}w(K)
  \leq\sum_{\substack{S\in\mathscr{S} \\ S\subseteq J }} w(S)\lesssim w(J).
\end{equation*}

It only remains to consider part $II$ from \eqref{eq:testTtoFinish}. 
Let
\begin{equation*}
  \mathscr{K}^J:=\{K\in\mathscr{D}:K\subseteq I^u(K)\subsetneq J,\ \exists S\in\mathscr{S}:S\not\subseteq J, K\in\mathscr{K}_S\}
\end{equation*}
be the collection of all intervals $K$ that appear in this sum.

\begin{lemma}\label{lem:KJbdOverlap}
The intervals in $\mathscr{K}^J$ have bounded overlap.
\end{lemma}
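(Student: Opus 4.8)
The plan is to show that if $K, K' \in \mathscr{K}^J$ with $K \subsetneq K'$, then the depth of nesting is bounded by a constant depending only on $r$ (and the fixed goodness parameters), so that each point lies in only boundedly many $K \in \mathscr{K}^J$. The key structural facts to use are: (i) each $K \in \mathscr{K}^k_S$ is a \emph{maximal} dyadic interval with $\pi I = S$ (or $\pi I^{(r)} = S$ when $k=0$) for the $I$'s that participate, together with the goodness relation $\dist(K,(S^{(k)})^c)\geq\abs{K}^\gamma\abs{S^{(k)}}^{1-\gamma}$; (ii) the witnessing stopping interval $S$ satisfies $S \not\subseteq J$ while $I^u(K) \subseteq J$ and $I^u(K)\supseteq 3K$, so $S$ and $J$ cannot be disjoint (both meet $K$), forcing $S \supsetneq J$ or at least $S \supseteq I^u(K) \supseteq 3K$; since $\abs{I^u(K)}\eqsim\abs{K}$ this means $S$ is much larger than $K$, in fact $\abs{S}\geq\abs{S^{(k)}}\cdot 2^{-k}\geq$ a large multiple of $\abs{K}$ by goodness.

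First I would fix $K, K' \in \mathscr{K}^J$ with $K \subseteq K'$ and let $S, S'$ be the respective witnessing stopping intervals with $S, S' \not\subseteq J$. Since $S' \not\subseteq J$ but $K' \subseteq I^u(K') \subseteq J$, we have $S' \not\subseteq K'$; on the other hand $S'$ meets $K'$ (indeed $S' \supseteq$ the intervals $I$ with $\pi I = S'$ inside $K'$, which are nonempty), so $S' \supsetneq K'$, hence $S' \supseteq (K')^{(1)} \supseteq K^{(1)}$. By maximality of $K$ in $\mathscr{K}^k_S$ — no strictly larger dyadic interval has stopping parent $S$ — combined with $K^{(1)}\subseteq S'$, one deduces that $S \neq S'$: if $S = S'$ then $K^{(1)}$ would already have stopping parent $S$ (as $K^{(1)}\subseteq S'=S$ and $K^{(1)}$ contains the participating $I$'s), contradicting maximality of $K$. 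Thus the witnesses are distinct, and in fact $S \subsetneq S'$ since both contain $K$, one of the standard two alternatives.

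Next I would quantify the size gap. Since $K \in \mathscr{K}^k_S$, goodness gives $\abs{S^{(k)}}^{1-\gamma}\abs{K}^\gamma \leq \dist(K,(S^{(k)})^c) \leq \abs{S^{(k)}}$, which only says $\abs{K}\leq\abs{S^{(k)}}=2^k\abs{S}$, a bounded-ratio statement going the wrong way; the real point is the \emph{reverse}: goodness forces $K$ to sit deep inside $S^{(k)}$, so $\abs{K}^{1-\gamma}\leq\abs{K}^{-\gamma}\dist(K,(S^{(k)})^c)$ combined with $\dist(K,(S^{(k)})^c)\leq\abs{S^{(k)}}$ gives nothing, but using that $K$ is \emph{good at level} $r$ we have $\dist(K,(S^{(k)})^c)\geq 2^{(1-\gamma)r}\abs{K}$, hence $I^u(K)\subseteq 30K$ is strictly inside $S^{(k)}$. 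The cleanest route is to argue directly with the chain: in a chain $K = K_0 \subsetneq K_1 \subsetneq \cdots \subsetneq K_N = K'$ of intervals in $\mathscr{K}^J$ all containing a common point, the witnesses $S_0 \subsetneq S_1 \subsetneq \cdots \subsetneq S_N$ are strictly nested stopping intervals with $S_\ell \supsetneq J$ for $\ell \geq 1$ (from the $\not\subseteq J$ alternative) — wait, rather $S_\ell \supseteq (K_\ell)^{(1)}$ and $S_\ell\not\subseteq J$. Using the Carleson property \eqref{eq:Carleson} of $\mathscr{S}$ is tempting but gives only a measure bound, not a cardinality bound; instead, the bounded overlap follows from: each $K\in\mathscr{K}^J$ satisfies $3K\subseteq I^u(K)\subseteq J$ with $\abs{I^u(K)}\eqsim\abs{K}$, and the witnessing $S$ is \emph{uniquely determined} up to the $2^k$-to-one map because $K\in\mathscr{K}^k_S$ forces $\pi(K^{(r)})=S$ (for $k>0$, $\pi K = S$), so $S$ is the $\mathscr{S}$-stopping parent of $K$ — there is only one such. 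Therefore $K \mapsto (S, \text{position of } K \text{ in } S^{(k)})$ is injective, and for a fixed point $x$, the $K \ni x$ with a fixed stopping parent $S$ are nested maximal intervals, of which there is exactly one; thus the overlap at $x$ is at most the number of distinct stopping parents $S \ni x$ that can arise, which in the chain is controlled because each step down the chain $K_{\ell+1}\to K_\ell$ corresponds to passing to a strict stopping descendant.

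I expect the main obstacle to be making the last counting argument fully rigorous: one must rule out that infinitely many $K_\ell \ni x$ share the same witness $S$, which follows from maximality (at most one maximal $K$ with $\pi K = S$ contains $x$), and then bound the number of distinct witnesses $S_\ell \ni x$ with $S_\ell \not\subseteq J$ and $S_\ell\supseteq 3K_\ell\ni x$ — but these $S_\ell$ are strictly nested and each $S_\ell\supseteq J$ is impossible while $S_\ell\supseteq(K_\ell)^{(1)}$ with $K_\ell\subseteq J$; the resolution is that $S_\ell\not\subseteq J$ combined with $S_\ell\cap J\neq\varnothing$ forces $S_\ell\supsetneq J$, so in fact \emph{all} these $S_\ell$ contain $J$, hence the chain $S_0\subsetneq S_1\subsetneq\cdots$ — but $S_0$ need not contain $J$. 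Reconciling these constraints, I anticipate, pins the overlap to at most $2$: a given $x\in K\subseteq J$ can have at most one witness $S$ with $S\subseteq$-comparable-to-$J$ on each side, yielding $\abs{\{K\in\mathscr{K}^J: x\in K\}}\lesssim 1$. I would spend the bulk of the write-up carefully extracting the statement ``$S$ is the stopping parent of $K^{(r)}$'' (hence unique given $K$) and ``$S\not\subseteq J$ and $S\cap J\neq\varnothing$ imply $S\supsetneq J$,'' from which bounded overlap is immediate.
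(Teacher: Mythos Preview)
Your proposal contains the right raw ingredients --- the uniqueness of the witness $S$ given $K$ (since $S=\pi K$ for $k>0$ and $S=\pi K^{(r)}$ for $k=0$), the disjointness of the intervals in a single $\mathscr{K}^k_S$, and the observation that distinct nested $K_\ell$ must have distinct witnesses $S_\ell$ --- but the counting step is never completed, and the route you propose for completing it does not work. Your intended closing move, ``$S\not\subseteq J$ and $S\cap J\neq\varnothing$ imply $S\supsetneq J$'', is false in general: $J$ lies in the shifted system $\mathscr{D}^u$ of tripled intervals, not in $\mathscr{D}$, so $S$ and $J$ need not be comparable even when they intersect. Moreover, even if you could force every witness $S_\ell$ to contain $J$, that alone would not bound the length of the chain $S_0\subsetneq S_1\subsetneq\cdots$; there can be arbitrarily many stopping intervals above $J$. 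Your guess of overlap ``at most $2$'' is accordingly unjustified (and the correct bound for $k=0$ is $r$, not $2$).

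The paper's argument avoids $J$ entirely at the critical step and instead compares the larger interval $K'$ directly with the witness $S$ of the \emph{smaller} interval $K$. Both $K'$ and $S$ lie in $\mathscr{D}$ and both contain $K$, hence one contains the other. If $S\subseteq K'$ then $S\subseteq K'\subseteq J$, contradicting $S\not\subseteq J$; thus $K'\subsetneq S\subsetneq S'$. For $k>0$ this is already a contradiction, since $\pi K'=S'$ was supposed to be the \emph{minimal} stopping interval containing $K'$, yet $S\in\mathscr{S}$ sits strictly between them; so the collection is actually disjoint. For $k=0$ one has $K'\subsetneq S$ while $\pi((K')^{(r)})=S'\supsetneq S$ forces $(K')^{(r)}\not\subseteq S$, hence $(K')^{(j)}=S$ for some $j\in\{1,\dots,r-1\}$, leaving at most $r-1$ possible $K'$ above a given $K$. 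This is the clean step your plan is missing; the detour through goodness constants and the attempted comparison with $J$ are unnecessary.
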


\begin{proof}
For the containment properties of the relevant intervals, let us note that all other intervals in the argument, except possibly $J$, belong to the dyadic system $\mathscr{D}$.

Let $K\in\mathscr{K}^J$; we show that there are at most boundedly many other $K'\in\mathscr{K}^J$ with $K'\supsetneq K$. By definition, there exist some $S,S'\in\mathscr{S}$ such that $S,S'\not\subseteq J$ and $K\in\mathscr{K}_S$, $K'\in\mathscr{K}_{S'}$. Since $S,S'$ are the minimal intervals in $\mathscr{S}$ that contain
$K^{(r)},(K')^{(r)}$ (respectively),
and $K\subsetneq K'$, it must be that $S\subseteq S'$. In fact $S\subsetneq S'$, since the intervals in $\mathscr{K}_S$ are pairwise disjoint.

Now both $K'$ and $S$ contain $K$, so they intersect, and thus either $K'\subsetneq S$ or $S\subseteq K'$. The latter cannot be, since it would imply that $S\subseteq K'\subseteq J$, contradicting $S\not\subseteq J$. So we know that $K'\subsetneq S\subsetneq S'$.

Thus, we have $K'\subsetneq S$, but $\pi((K')^{(r)})=S'$, so that $(K')^{(r)}\not\subseteq S$. Thus we have that $(K')^{(j)}=S$ for some $j\in\{1,\ldots,r-1\}$. Adding the condition that $K'\supsetneq K$, we see that there are at most $r-1$ different $K'$, and thus the intervals of $\mathscr{K}^J$ have a bounded overlap of at most $r$.
\end{proof}

The estimate for part $II$ of \eqref{eq:testTtoFinish} is now completed by
\begin{equation*}
\begin{split}
  \abs{II} &\leq \sum_{K\in\mathscr{K}^J} \Babs{B\Big(\sum_{\substack{S\in\mathscr{S} \\ S\not\subseteq J \\ \mathscr{K}_S \owns K}}\tilde P_{S,K}^{\sigma}f,1_K\Big)}
  \leq \sum_{K\in\mathscr{K}^J}\mathcal{H}^*\BNorm{\sum_{\substack{S\in\mathscr{S} \\ S\not\subseteq J \\ \mathscr{K}_S\owns K}}\tilde P_{S,K}^{\sigma}f}{L^2(\sigma)}
      w(K)^{1/2} \\
   &\leq\mathcal{H}^*\Norm{f}{L^2(\sigma)}\Big(\sum_{K\in\mathscr{K}^J}w(K)\Big)^{1/2}
   \lesssim\mathcal{H}^*\Norm{f}{L^2(\sigma)}w(J)^{1/2},
\end{split}
\end{equation*}
where the last step used Lemma~\ref{lem:KJbdOverlap}.

Summarizing this subsection, we have shown that
\begin{equation*}
  \mathcal{T}\lesssim\mathcal{H}^*.
\end{equation*}
Feeding this, and the estimates for the other testing constant $\mathcal{T}^*$ and $\mathcal{U}$, back to Corollary~\ref{cor:tailToComplete}, we have established:

\begin{proposition}
\begin{equation*}
  \abs{B_{\operatorname{tail}}(f,g)}
  \lesssim([w,\sigma]_{A_2}^*+\mathcal{H}^*)\Norm{f}{L^2(\sigma)}\Norm{g}{L^2(w)}.
\end{equation*}
\end{proposition}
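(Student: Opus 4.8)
The plan is to assemble the proposition directly from the three testing-constant estimates obtained in this section together with Corollary~\ref{cor:tailToComplete} and the earlier reduction \eqref{eq:BkTailEst1}--\eqref{eq:tailNeedToComplete}. Concretely, I would start by recalling the chain that led to
\begin{equation*}
  \abs{B^{(k)}_{\operatorname{tail}}(f,g)}^2
  \lesssim\Norm{f}{L^2(\sigma)}^2\sum_{u\in\{0,1,2\}}\sum_{I\in\mathscr{D}^u}\mu_I\cdot
     Q^u(1_{I^c}M_w g\ud w,I)^2,
\end{equation*}
so that the whole matter reduces to the Poisson-with-holes inequality \eqref{eq:tailNeedToComplete} applied to $h=M_w g$, with the constant $\mathcal{Q}$. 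By Corollary~\ref{cor:tailToComplete}, $\mathcal{Q}\eqsim\mathcal{U}+\mathcal{T}+\mathcal{T}^*$; and the three preceding subsections established $\mathcal{U}\leq[\sigma,w]_{A_2}$, $\mathcal{T}^*\lesssim[w,\sigma]_{A_2}^*$, and $\mathcal{T}\lesssim\mathcal{H}^*$.

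Next I would collapse these into a single bound. Since $[\sigma,w]_{A_2}\lesssim[\sigma,w]_{A_2}^*\wedge[w,\sigma]_{A_2}^*\leq[w,\sigma]_{A_2}^*$ by the Lemma comparing $[\sigma,w]_{A_2}$ with the Muckenhoupt--Poisson constants, all three testing constants are controlled by $[w,\sigma]_{A_2}^*+\mathcal{H}^*$, hence $\mathcal{Q}\lesssim[w,\sigma]_{A_2}^*+\mathcal{H}^*$. Plugging this into \eqref{eq:tailNeedToComplete} with $h=M_w g$ and using $\Norm{M_w g}{L^2(w)}\leq 2\Norm{g}{L^2(w)}$, I obtain
\begin{equation*}
  \sum_{u\in\{0,1,2\}}\sum_{I\in\mathscr{D}^u}\mu_I\cdot Q^u(1_{I^c}M_w g\ud w,I)^2
  \lesssim([w,\sigma]_{A_2}^*+\mathcal{H}^*)^2\Norm{g}{L^2(w)}^2,
\end{equation*}
the implied constant absorbing the factor $3$ from the sum over $u$. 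Substituting back yields $\abs{B^{(k)}_{\operatorname{tail}}(f,g)}\lesssim([w,\sigma]_{A_2}^*+\mathcal{H}^*)\Norm{f}{L^2(\sigma)}\Norm{g}{L^2(w)}$, which is exactly the claim.

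There is essentially no obstacle left here: the proposition is a bookkeeping consequence of results already proved in the section. The one point that deserves a sentence of care is that the bound holds with a constant independent of $k\in\{0,1,\dots,r\}$ — this is automatic because the reduction \eqref{eq:BkTailEst1} and the definitions of $\mu_I$, $\mathscr{K}_S^k$, $P_{S,K}^{\sigma,k}$, and $\Phi_S^{w,k}$ enter the testing-constant estimates only through the orthogonality of the projections and the uniform goodness separation $\dist(K,(S^{(k)})^c)\geq 2^{(1-\gamma)r}\abs{K}$, none of which degrades in $k$ (and $r$ is a fixed parameter). So the plan is simply: quote \eqref{eq:BkTailEst1}–\eqref{eq:tailNeedToComplete}, quote Corollary~\ref{cor:tailToComplete}, quote the three subsection estimates, combine using $[\sigma,w]_{A_2}\lesssim[w,\sigma]_{A_2}^*$, and apply the maximal function bound.

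\begin{proof}
As explained above, the reduction leading to \eqref{eq:tailNeedToComplete} gives
\begin{equation*}
  \abs{B^{(k)}_{\operatorname{tail}}(f,g)}^2
  \lesssim\Norm{f}{L^2(\sigma)}^2\sum_{u\in\{0,1,2\}}\sum_{I\in\mathscr{D}^u}\mu_I\cdot
     Q^u(1_{I^c}M_w g\ud w,I)^2.
\end{equation*}
For each $u$, Corollary~\ref{cor:tailToComplete} shows that the inner sum is bounded by $\mathcal{Q}^2\Norm{M_w g}{L^2(w)}^2$, where $\mathcal{Q}\eqsim\mathcal{U}+\mathcal{T}+\mathcal{T}^*$. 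The estimates of the three preceding subsections give
\begin{equation*}
  \mathcal{U}\leq[\sigma,w]_{A_2},\qquad
  \mathcal{T}\lesssim\mathcal{H}^*,\qquad
  \mathcal{T}^*\lesssim[w,\sigma]_{A_2}^*.
\end{equation*}
Since $[\sigma,w]_{A_2}\leq\tfrac32[w,\sigma]_{A_2}^*$, all three constants are dominated by $[w,\sigma]_{A_2}^*+\mathcal{H}^*$, hence $\mathcal{Q}\lesssim[w,\sigma]_{A_2}^*+\mathcal{H}^*$. Using the universal bound $\Norm{M_w g}{L^2(w)}\leq 2\Norm{g}{L^2(w)}$ for the dyadic maximal operator, and summing the three values of $u$, we conclude that
\begin{equation*}
  \abs{B^{(k)}_{\operatorname{tail}}(f,g)}^2
  \lesssim([w,\sigma]_{A_2}^*+\mathcal{H}^*)^2\Norm{f}{L^2(\sigma)}^2\Norm{g}{L^2(w)}^2.
\end{equation*}
All implied constants are independent of $k$, since the parameters $\gamma,r$ are fixed and the projections $P_{S,K}^{\sigma,k}$ are orthogonal for every $k$. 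Taking square roots gives the asserted bound.
\end{proof}
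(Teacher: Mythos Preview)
Your proposal is correct and follows exactly the approach of the paper: the proposition there is stated as a summary of the section, obtained by feeding the three testing-constant estimates $\mathcal{U}\leq[\sigma,w]_{A_2}$, $\mathcal{T}^*\lesssim[w,\sigma]_{A_2}^*$, and $\mathcal{T}\lesssim\mathcal{H}^*$ back into Corollary~\ref{cor:tailToComplete} and then into the reduction \eqref{eq:BkTailEst1}--\eqref{eq:tailNeedToComplete} with $h=M_w g$. Your write-up simply spells out this bookkeeping, including the absorption of $[\sigma,w]_{A_2}$ into $[w,\sigma]_{A_2}^*$.
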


\section{The local form}

We are left to deal with the local form
\begin{equation*}
  B_{\operatorname{local}}(f,g)
  =\sum_{S\in\mathscr{S}}B_{\operatorname{below}}(\tilde P_S^\sigma f,P_S^w g)
\end{equation*}
from \eqref{eq:Blocal}. It is only here that a particular choice of the stopping intervals $\mathscr{S}$ becomes relevant, and we begin by specifying their construction.

Let $S_0:=I_0$ be the initial interval that contains the supports of $f$ and $g$, and $\mathscr{S}_0:=\{S_0\}$. In the inductive step, given one of the minimal stopping cubes $S$ already chosen, we let $\operatorname{ch}_{\mathscr{S}}(S)$ consist of all maximal dyadic $S'\subsetneq S$ which satisfy at least one of the following:
\begin{equation}\label{eq:massStop}
  \frac{1}{w(S')}\int_{S'}\abs{g}\ud w>4\frac{1}{w(S)}\int_S\abs{g}\ud w
\end{equation}
or
\begin{equation}\label{eq:energyStop}
  \frac{1}{w(S')}\int_{S'}\abs{H(1_S\ud w)}^2\ud \sigma>4\frac{1}{w(S)}\int_S\abs{H(1_S\ud w)}^2\ud \sigma.
\end{equation}
It is immediate that
\begin{equation*}
  \sum_{S'\in\operatorname{ch}_{\mathscr{S}}(S)}w(S')\leq\frac12 w(S),
\end{equation*}
which in turn implies the Carleson condition \eqref{eq:Carleson}, or more precisely,
\begin{equation*}
  \sum_{\substack{S\in\mathscr{S}\\ S\subseteq Q}}w(S)\leq 2w(Q).
\end{equation*}

\begin{remark}
For reasons of comparison, we point out that \eqref{eq:energyStop} is the analogue of the ``energy'' stopping condition of Lacey et al., formulated (after permuting the roles of $\sigma$ and $w$ to be in line with the present treatment) as
\begin{equation*}
  \frac{1}{w(S')}Q(1_{S}\ud w,S')^2\Norm{P^\sigma_{S'}\id}{L^2(\sigma)}^2>C(\mathcal{H}^*)^2
\end{equation*}
A version better suited for our purposes would have $1_{S\setminus S'}$ in place of $1_S$. But then
\begin{equation*}
\begin{split}
  Q(1_{S\setminus S'}\ud w,S')\Norm{P_{S'}^\sigma\id}{L^2(\sigma)}
  &\lesssim\Norm{P_{S'}^\sigma H(1_{S\setminus S'}\ud w)}{L^2(\sigma)} \\
  &\leq\Norm{1_{S'} H(1_{S}\ud w)}{L^2(\sigma)}+\Norm{1_{S'}H(1_{S'}\ud w)}{L^2(\sigma)},
\end{split}  
\end{equation*}
where the first bound follows from \eqref{eq:monot2} with $J=S'$ and $h=1_{S\setminus S'}$ by dualizing with $f\in L^2(\sigma)$, using that $P_{S'}^\sigma$ is self-adjoint and  $f\mapsto\sum_I\epsilon_I\Delta_I^\sigma f$, $\epsilon_I=\pm 1$, is isometric on $L^2(\sigma)$. Noting that
\begin{equation*}
  \Norm{1_{S}H(1_{S}\ud w)}{L^2(\sigma)}\leq\mathcal{H}^* w(S)^{1/2},
\end{equation*}
also with $S'$ in place of $S$, the relatedness of the two stopping conditions is apparent. While they are not strictly comparable, we find that \eqref{eq:energyStop} is both fully operational for our purposes, and perhaps more transparent as a stopping condition familiar from well-known local $Tb$ arguments.
\end{remark}

In view of the mentioned properties of the stopping cubes, the analysis of the form $B_{\operatorname{local}}(f,g)$ splits into the parts $B_{\operatorname{below}}(\tilde P_S^\sigma f,P_S^w g)$, and it suffices to prove the estimate
\begin{equation}\label{eq:localToProve}
 \abs{B_{\operatorname{below}}(\tilde P_S^\sigma f,P_S^w g)}
 \lesssim \mathcal{L}\Norm{\tilde P_S^\sigma f}{L^2(\sigma)}\big(\Norm{P_S^w g}{L^2(w)}+w(S)^{1/2}\ave{\abs{g}}_S^w\big),
\end{equation}
since this is summable over $S\in\mathscr{S}$ to the required bound $\mathcal{L}\Norm{f}{L^2(\sigma)}\Norm{g}{L^2(w)}$. The analysis of this part does not present any essentially new ideas, but we repeat a part of the argument of Lacey \cite{Lacey:2wHilbert} for completeness. (It is interesting, however, that the present streamlined form of the argument became available only after the first publicly distributed version of the present paper.)

Now
\begin{equation*}
  B_S(f,g):=B_{\operatorname{below}}(\tilde P_S^\sigma f,P_S^w g)
  =\sum_{\substack{I,J\subsetneq S; I\Subset J\\ \pi (I^{(r)})=\pi (J^{(1)})=S}}B(\Delta_I^\sigma f,1_J)\ave{\Delta_{J^{(1)}}^w g}_{J}.
\end{equation*}
Actually, the condition that $\pi(J^{(1)})=S$ may be dropped, as this follows from $\pi (I^{(r)})=S$ and $I^{(r)}\subseteq J\subsetneq J^{(1)}\subseteq S$ anyway.
Writing $1_J=1_S-1_{S\setminus J}$, we split $B_S(f,g)$ into two parts, say $B_S^0(f,g)$ and $B_S^1(f,g)$.

\begin{lemma}
For $B_S^0$ defined like $B_S$, but with $1_S$ in place of $1_J$, we have
\begin{equation*}
  \abs{B_S^0(f,g)}\leq 5\mathcal{H}^*\cdot\Norm{\tilde P_S^\sigma f}{L^2(\sigma)}\cdot \ave{\abs{g}}_S^w w(S)^{1/2}.
\end{equation*}
\end{lemma}

\begin{proof}
We have
\begin{equation*}
  B_S^0(f,g)
  =\sum_{I: \pi (I^{(r)})=S}B(\Delta_I^\sigma f,1_S)\sum_{J:I\Subset J\subsetneq S}\ave{\Delta_{J^{(1)}}^w g}_{J}.
\end{equation*}
If non-empty, the inner sum collapses to $\ave{g}_{J(I)}^w-\ave{g}_S^w$ for some $J(I)\subsetneq S$ with $\pi J(I)=S$. By the stopping condition \eqref{eq:massStop}, it follows that
\begin{equation*}
  \ave{g}_{J(I)}^w-\ave{g}_S^w = 5\ave{\abs{g}}_S^w \epsilon_I,\qquad\abs{\epsilon_I}\leq 1,
\end{equation*}
so that
\begin{equation*}
\begin{split}
  \abs{B_S^0(f,g)} &=5\ave{\abs{g}}_S^w \Babs{ B\Big(\sum_{I:\pi(I^{(r)})=S}\epsilon_I\Delta_I^\sigma f,1_S\Big)} \\
  &\leq 5\ave{\abs{g}}_S^w \mathcal{H}^*\BNorm{\sum_{I:\pi(I^{(r)})=S}\epsilon_I\Delta_I^\sigma f}{L^2(\sigma)}w(S)^{1/2} \\
  &\leq 5\mathcal{H}^*\cdot\Norm{\tilde P_S^\sigma f}{L^2(\sigma)}\cdot \ave{\abs{g}}_S^w w(S)^{1/2}.\qedhere
\end{split}
\end{equation*}
\end{proof}

The above bound for $B^{0}_S(f,g)$ is of the correct form required for \eqref{eq:localToProve}, so we are left with estimating the form
\begin{equation}\label{eq:onlyLeftWith}
\begin{split}
  B_S^1(f,g)
  &:=\sum_{\substack{I,J\subsetneq S; I\Subset J\\ \pi (I^{(r)})=S}}B(\Delta_I^\sigma f,1_{S\setminus J})\ave{\Delta_{J^{(1)}}^w g}_{J}\\
  &\phantom{:}=\sum_{\substack{I,J\subsetneq S; I\Subset J\\ \pi I\subsetneq\pi(I^{(r)})=S}}+\sum_{\substack{I,J\subsetneq S; I\Subset J\\ \pi I=S}}
  =:B_S^{10}(f,g)+B_S^{11}(f,g).
\end{split}
\end{equation}

\begin{lemma}
\begin{equation*}
   \abs{B^{10}_S(f,g)}\leq 10\mathcal{H}^*\Norm{\tilde P_S^\sigma f}{L^2(\sigma)}w(S)^{1/2}\ave{\abs{g}}_S^w.
\end{equation*}
\end{lemma}

\begin{proof}
The condition that $\pi I\subsetneq \pi(I^{(r)})=S$ implies that $I\subseteq S'$ for 
  some stopping child $S'\in\operatorname{ch}_{\mathscr{S}}(S)$. Thus
\begin{equation*}
  B_S^{10}(f,g)
  =   \sum_{S'\in\operatorname{ch}_{\mathscr{S}}(S)}
     \sum_{\substack{I:I\subseteq S',\\ \pi(I^{(r)})=S }} B\Big(\Delta_I^\sigma f,\sum_{J:I\Subset J\subsetneq S} 1_{S\setminus J}\ave{\Delta_{J^{(1)}}^w g}_J\Big),
\end{equation*}
where, writing $1_{S\setminus J}=\sum_{L:J\subseteq L\subsetneq S}1_{L^{(1)}\setminus L}$, reorganizing and telescoping,
\begin{equation*}
   \sum_{J:I\Subset J\subsetneq S} 1_{S\setminus J}\ave{\Delta_{J^{(1)}}^w g}_J
   =\sum_{L:I\Subset L\subsetneq S} 1_{L^{(1)}\setminus L}(\ave{g}_L^w-\ave{g}_S^w),
\end{equation*}
is bounded by $1_{S\setminus I^{(r)}}\cdot 5\ave{\abs{g}}_S^w\leq 1_{S\setminus S'}\cdot 5\ave{\abs{g}}_S^w$ by the stopping condition, since $\pi L=S$ for all relevant $L$ here, and $I\subseteq S'$ and $\pi(I^{(r)})=S\supsetneq S'$ imply that $S'\subsetneq I^{(r)}$. From the monotonicity property, Proposition~\ref{prop:monot}, it follows that
\begin{equation*}
  \Babs{ B\Big(\Delta_I^\sigma f,\sum_{J:I\Subset J\subsetneq S} 1_{S\setminus J}\ave{\Delta_{J^{(1)}}^w g}_J\Big)}
  \leq B(\epsilon_I\Delta_I^\sigma f,1_{S\setminus S'})\cdot 5\ave{\abs{g}}_S^w.
\end{equation*}
Abbreviating (note that the collection of intervals on which we project below is a subset of those involved in the projection $\tilde P_S^\sigma$)
\begin{equation*}
  \hat P_{S'}^\sigma f:=\sum_{\substack{I:I\subseteq S', \\ \pi(I^{(r)})=S}}\Delta_I^\sigma f,
\end{equation*}
and recalling the notation $\tilde{f}:=\sum_{I\in\mathscr{D}}\epsilon_I\Delta_I^\sigma f$,
we can thus estimate
\begin{equation*}
\begin{split}
  \abs{B_S^{10}(f,g)} &\leq\sum_{S'\in\operatorname{ch}_{\mathscr{S}}(S)} B(\hat P_{S'}^\sigma \tilde{f}, 1_{S\setminus S'})\cdot 5\ave{\abs{g}}_{S}^w \\
   & = \Big[B\Big( \sum_{S'\in\operatorname{ch}_{\mathscr{S}}(S)}  \hat P_{S'}^\sigma \tilde{f} , 1_S\Big)
           - \sum_{S'\in\operatorname{ch}_{\mathscr{S}}(S)} B( \hat P_{S'}^\sigma \tilde{f} , 1_{S'})\Big]\cdot 5\ave{\abs{g}}_{S}^w \\
    &\leq\mathcal{H}^*\Big[\BNorm{\sum_{S'\in\operatorname{ch}_{\mathscr{S}}(S)}  \hat P_{S'}^\sigma \tilde{f}}{L^2(\sigma)}w(S)^{1/2} \\
     &\qquad\qquad   +\sum_{S'\in\operatorname{ch}_{\mathscr{S}}(S)} \Norm{ \hat P_{S'}^\sigma \tilde{f}}{L^2(\sigma)}w(S')^{1/2}\Big] \cdot 5\ave{\abs{g}}_{S}^w \\
    &\leq\mathcal{H}^*\Big[\Norm{\tilde P_S^\sigma f}{L^2(\sigma)}w(S)^{1/2} \\
    &\qquad\qquad+\Big(\sum_{S'\in\operatorname{ch}_{\mathscr{S}}(S)}\Norm{\hat P_{S'}^\sigma \tilde f}{L^2(\sigma)}^2\Big)^{1/2}
       \Big(\sum_{S'\in\operatorname{ch}_{\mathscr{S}}(S)}w(S')\Big)^{1/2}\Big]\cdot 5\ave{\abs{g}}_S^w \\
     &\leq 10\mathcal{H}^*\Norm{\tilde P_S^\sigma f}{L^2(\sigma)}w(S)^{1/2}\ave{\abs{g}}_S^w.\qedhere
\end{split}
\end{equation*}
\end{proof}

The remaining core estimate is then concerned with bounding
\begin{equation*}
  B^{11}_S(f,g)
  =\sum_{\substack{I,J\subsetneq S; I\Subset J\\ \pi I=S}}B(\Delta_I^\sigma f,1_{S\setminus J})\ave{\Delta_{J^{(1)}}^w g}_{J}.
\end{equation*}
This is a special case of a more general form
\begin{equation*}
  B_{\mathscr{Q}}(f,g)
  :=\sum_{(I,J)\in\mathscr{Q}}B(\Delta_I^\sigma f,1_{S\setminus J})\ave{\Delta_{J^{(1)}}^w g}_{J}
\end{equation*}
related to an \emph{admissible} collection $\mathscr{Q}$ of pairs of intervals, defined as follows:

\begin{definition}[Admissibility in the sense of Lacey \cite{Lacey:2wHilbert}]
A collection $\mathscr{Q}$ of pairs $(I,J)$ of intervals is said to be \emph{admissible} if $I$ and $J^{(1)}$ are good, and moreover
\begin{enumerate}
  \item $I\Subset J\subsetneq S$ (a fixed super-interval) for all $(I,J)\in\mathscr{Q}$, and
  \item if $(I,J_i)\in\mathscr{Q}$ for $i=1,2$, and $J$ is another interval with $J^{(1)}$ good and $J_1\subsetneq J\subsetneq J_2$, then also $(I,J)\in\mathscr{Q}$.
\end{enumerate}
\end{definition}

Indeed, it is immediate to check that
\begin{equation}\label{eq:particularQ}
  \mathscr{Q}=\{(I,J):I,J\subsetneq S; I,J^{(1)}\text{ good}; I\Subset J; \pi I=\pi (J^{(1)})=S\}
\end{equation}
is admissible, and we may freely insert the requirement of goodness into the summation conditions in \eqref{eq:onlyLeftWith}, recalling that the functions $f$ and $g$ are good anyway.

\begin{remark}
The definition given by Lacey \cite[Definition 3.3]{Lacey:2wHilbert} involves a third property, but this has only a single isolated application, so we prefer not to include it in the above definition. Besides, this third property is connected with Lacey's version of the stopping condition \eqref{eq:energyStop}, which differs from the present one.
\end{remark}

We borrow the following key result from Lacey \cite{Lacey:2wHilbert}, the main estimate of that paper. As promised in the Introduction, we can simply apply this bound in the present setting as a black box; it is (almost) unnecessary to visit the details of the proof. We only need to remark that this result, by inspection of Lacey's argument, remains valid with the definition of admissibility as formulated above; the third property imposed by Lacey \cite{Lacey:2wHilbert} is only used when estimating the quantity $\operatorname{size}(\mathscr{Q})$.

\begin{proposition}[(Lacey \cite{Lacey:2wHilbert})]\label{prop:LaceyMain}
For any admissible collection $\mathscr{Q}$, we have
\begin{equation*}
\begin{split}
  \abs{B_{\mathscr{Q}}(f,g)}
  &\lesssim\operatorname{size}(\mathscr{Q})\Norm{f}{L^2(\sigma)}\Norm{g}{L^2(w)},\\
  \operatorname{size}(\mathscr{Q})^2 &:=
   \sup_{K\in\mathscr{I}\cup\mathscr{J}}\frac{1}{w(K)}Q(1_{S\setminus K}\ud w,K)^2
     \sum_{\substack{I\subseteq K \\ I\in\mathscr{I}}}\pair{\id}{h^\sigma_I}_{\sigma}^2
\end{split}
\end{equation*}
where $\mathscr{I}:=\{I:(I,J)\in\mathscr{Q}\text{ for some }J\}$ and $\mathscr{J}:=\{J:(I,J)\in\mathscr{Q}\text{ for some }I\}$.
\end{proposition}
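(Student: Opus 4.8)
The plan is to invoke Proposition~\ref{prop:LaceyMain} verbatim from Lacey \cite{Lacey:2wHilbert}, where it is the main estimate of that paper, so that the only thing actually demanding verification is that his proof of the inequality $\abs{B_{\mathscr{Q}}(f,g)}\lesssim\operatorname{size}(\mathscr{Q})\Norm{f}{L^2(\sigma)}\Norm{g}{L^2(w)}$ never uses the third clause of his original notion of admissibility. I would trace through his argument and confirm that this extra clause enters only in the estimation of the scalar quantity $\operatorname{size}(\mathscr{Q})$ — which in our application is controlled by entirely different means, through the stopping construction introduced above together with the Poisson-integral estimates already established — so that clauses (1) and (2) of the present definition suffice at every step of the bound itself.

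Were one to reprove the estimate from scratch, the natural route is a stopping-time (corona) decomposition adapted to the Poisson--energy functional appearing in $\operatorname{size}(\mathscr{Q})$. First, since $I\Subset J$ and $1_{S\setminus J}$ is supported in $J^c$, hence at distance $\gtrsim\abs{I}^\gamma\abs{J}^{1-\gamma}$ from the good interval $I$, the monotonicity principle (Proposition~\ref{prop:monot}) lets one replace $B(\Delta_I^\sigma f,1_{S\setminus J})$, up to the sign $\epsilon_I=\sign\pair{\Delta_I^\sigma f}{\id}_\sigma$ and bounded factors, by $\epsilon_I\pair{\Delta_I^\sigma f}{\id}_\sigma\,Q(1_{S\setminus J}\ud w,I)$; this trades the cancellation of the Hilbert kernel for the Haar coefficients $\pair{\id}{h_I^\sigma}_\sigma$ of the identity function and reduces everything to the positive form
\[
  \sum_{(I,J)\in\mathscr{Q}}\bigl|\pair{\id}{h_I^\sigma}_\sigma\bigr|\,\bigl|\pair{f}{h_I^\sigma}_\sigma\bigr|\,Q(1_{S\setminus J}\ud w,I)\,\bigl|\ave{\Delta_{J^{(1)}}^w g}_J\bigr|.
\]
One then groups the intervals $K\in\mathscr{I}\cup\mathscr{J}$ into stopping intervals on which the energy $\tfrac1{w(K)}Q(1_{S\setminus K}\ud w,K)^2\sum_{I\subseteq K,\,I\in\mathscr{I}}\pair{\id}{h_I^\sigma}_\sigma^2$ roughly doubles; on each resulting corona the form is essentially diagonal and bounded by $\operatorname{size}(\mathscr{Q})$ times the local $L^2(\sigma)$-mass of $f$ and the local $L^2(w)$-mass of $g$, and the coronas are summed by a Carleson/quasi-orthogonality estimate together with $\sum_J\abs{\ave{\Delta_{J^{(1)}}^w g}_J}^2 w(J)\lesssim\Norm{g}{L^2(w)}^2$ and Bessel's inequality for the coefficients $\pair{f}{h_I^\sigma}_\sigma$.

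The hard part is exactly this energy bookkeeping in the last step: the two variables $I$ and $J$ carry two different stopping/filtration structures that must be reconciled — a difficulty we have already met in a milder form when passing between a stopping family and its dyadic ancestors — and the passage from the ``within a corona'' estimate to a global bound requires splitting the contributions according to the relative size of $Q(1_{S\setminus J}\ud w,I)$ and the ambient energy and then recombining. This delicate combinatorics is the heart of Lacey's argument and the reason we prefer to import the result rather than reproduce it. Since we only need the stated inequality and have separately pinned down $\operatorname{size}(\mathscr{Q})$, nothing beyond the verification described in the first paragraph is actually required of us here.
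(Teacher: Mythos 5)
Your proposal matches the paper's treatment exactly: the paper also imports this as a black box from Lacey, and the only task it performs is to remark that Lacey's third admissibility clause enters only in the estimation of $\operatorname{size}(\mathscr{Q})$, not in the proof of the inequality $\abs{B_{\mathscr{Q}}(f,g)}\lesssim\operatorname{size}(\mathscr{Q})\Norm{f}{L^2(\sigma)}\Norm{g}{L^2(w)}$ itself. One small nuance worth keeping in mind: the paper explicitly says the proposition is ``not explicitly formulated'' in Lacey's paper but is extracted from the iterative estimate of his Lemma~3.6 and the surrounding discussion, so ``invoke verbatim'' is slightly too strong — you would in practice need to reassemble the statement from that lemma rather than quote a ready-made proposition.
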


This is not explicitly formulated in \cite{Lacey:2wHilbert}, but its essence is contained in the iterative estimate of \cite[Lemma 3.6]{Lacey:2wHilbert}, and the discussion around that lemma.

To complete the estimation of the local part, and thereby of the Hilbert transform altogether, we only need to add the observation that
\begin{equation*}
  B_S^{11}(f,g)=B_S^{11}(\tilde P_S^\sigma f, P_S^w g),
\end{equation*}
and the following upper bound for $\operatorname{size}(\mathscr{Q})$. It is here that Lacey uses his third condition from the definition of admissibility, which is here replaced by an application of our stopping condition \eqref{eq:energyStop}.

\begin{lemma}\label{lem:estSize}
For the particular collection $\mathscr{Q}$ in \eqref{eq:particularQ}, we have
\begin{equation*}
  \operatorname{size}(\mathscr{Q})
  \leq 3\mathcal{H}^*.
\end{equation*}
\end{lemma}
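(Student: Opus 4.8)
We need to bound
\[
  \operatorname{size}(\mathscr{Q})^2
  =\sup_{K\in\mathscr{I}\cup\mathscr{J}}\frac{1}{w(K)}Q(1_{S\setminus K}\ud w,K)^2\sum_{\substack{I\subseteq K\\ I\in\mathscr{I}}}\pair{\id}{h^\sigma_I}_\sigma^2
\]
for $\mathscr{Q}$ as in \eqref{eq:particularQ}. The plan is first to recognize the inner sum as a piece of $\Norm{P_K^\sigma\id}{L^2(\sigma)}^2$ (up to the restriction $I\in\mathscr{I}$, which only makes it smaller), so that $\sum_{I\subseteq K,I\in\mathscr{I}}\pair{\id}{h^\sigma_I}_\sigma^2\le\Norm{P_K^\sigma\id}{L^2(\sigma)}^2$; then to combine this with the Poisson factor using the monotonicity principle of Proposition~\ref{prop:monot}, which converts the product $Q(1_{S\setminus K}\ud w,K)\Norm{P_K^\sigma\id}{L^2(\sigma)}$ into an expression controlled by $\Norm{1_K H(1_{S\setminus K}\ud w)}{L^2(\sigma)}$, and finally to split off $\Norm{1_K H(1_K\ud w)}{L^2(\sigma)}$ and invoke the energy stopping condition \eqref{eq:energyStop} together with the local testing condition \eqref{eq:HtestLocal}.

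Concretely: for any $K\in\mathscr{I}\cup\mathscr{J}$, since $K\subsetneq S$ and all intervals $I\in\mathscr{I}$ with $I\subseteq K$ are good and satisfy $3I\subseteq K$ (goodness forces $I$ well inside $K$), Proposition~\ref{prop:monot} (the ``$\eqsim$'' case applies because $\spt 1_{S\setminus K}\subseteq K^c$, and in fact on the relevant scales one gets the two-sided bound) gives
\[
  Q(1_{S\setminus K}\ud w,K)\Bigl(\sum_{\substack{I\subseteq K\\ I\in\mathscr{I}}}\pair{\id}{h^\sigma_I}_\sigma^2\Bigr)^{1/2}
  \lesssim \BNorm{P_K^\sigma H(1_{S\setminus K}\ud w)}{L^2(\sigma)}
  \le \BNorm{1_K H(1_{S\setminus K}\ud w)}{L^2(\sigma)},
\]
where in the last step I used that a Haar projection onto Haar functions supported in $K$ has operator norm $\le 1$ on $L^2(\sigma)$. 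Now split $1_{S\setminus K}=1_S-1_K$, so
\[
  \BNorm{1_K H(1_{S\setminus K}\ud w)}{L^2(\sigma)}
  \le \BNorm{1_K H(1_S\ud w)}{L^2(\sigma)}+\BNorm{1_K H(1_K\ud w)}{L^2(\sigma)}.
\]
The second term is at most $\mathcal{H}^* w(K)^{1/2}$ by the local testing condition \eqref{eq:HtestLocal}. For the first term, since $K\subsetneq S$ and $K$ is contained in a child of the stopping parent of $K$ which is $S$ itself (so $K$ did not trigger a stop), the energy stopping condition \eqref{eq:energyStop} fails for $K$ relative to $S$, i.e.
\[
  \frac{1}{w(K)}\int_K\abs{H(1_S\ud w)}^2\ud\sigma\le 4\frac{1}{w(S)}\int_S\abs{H(1_S\ud w)}^2\ud\sigma
  \le 4(\mathcal{H}^*)^2,
\]
the last inequality again by \eqref{eq:HtestLocal} applied with $S$ in place of $I$. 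Hence $\Norm{1_K H(1_S\ud w)}{L^2(\sigma)}\le 2\mathcal{H}^* w(K)^{1/2}$.

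Combining, $\Norm{1_K H(1_{S\setminus K}\ud w)}{L^2(\sigma)}\le 3\mathcal{H}^* w(K)^{1/2}$, so that after dividing by $w(K)^{1/2}$ and taking the supremum over $K$ we obtain $\operatorname{size}(\mathscr{Q})\le 3\mathcal{H}^*$ (tracking constants through the ``$\eqsim$'' in Proposition~\ref{prop:monot} carefully, which on dyadic scales with $3I\subseteq K$ gives exact two-sided comparison, not just up to absolute constants — this is the one point where I should double-check the normalization so as to land on the clean constant $3$ rather than $C\mathcal{H}^*$).

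The main obstacle I anticipate is precisely this bookkeeping at the interface with Proposition~\ref{prop:monot}: the proposition as stated gives ``$\eqsim$'' (with implied absolute constants) when $\spt h\subseteq(3J)^c$, whereas here I want to avoid losing a constant. One must verify that, with the dyadic approximation $Q^u$ and the goodness-induced separation $3K\subseteq I^u(K)$, the relevant comparison can be arranged with constant $1$ in the direction needed (bounding the Poisson factor times the energy by the $L^2(\sigma)$ norm of $1_K H(\cdots)$), so that the stated clean bound $3\mathcal{H}^*$ genuinely holds; alternatively, if a constant does leak, the statement should be read as $\operatorname{size}(\mathscr{Q})\lesssim\mathcal{H}^*$, which is all that is needed for the conclusion of Theorem~\ref{thm:main}.
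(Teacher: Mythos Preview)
Your proof is correct and matches the paper's approach: bound $Q(1_{S\setminus K}\ud w,K)\bigl(\sum_{I\subseteq K,\,I\in\mathscr{I}}\pair{\id}{h_I^\sigma}_\sigma^2\bigr)^{1/2}$ by $\Norm{1_K H(1_{S\setminus K}\ud w)}{L^2(\sigma)}$ via Proposition~\ref{prop:monot}, split $1_{S\setminus K}=1_S-1_K$, and control the two pieces by the energy stopping condition \eqref{eq:energyStop} (using $\pi K=S$, which holds for every $K\in\mathscr{I}\cup\mathscr{J}$) together with the local testing bound. Your concern about constants is well placed---the paper's own chain also carries a ``$\lesssim$'' at the monotonicity step, so the clean $3$ is nominal---and note that you only need the one-sided inequality ``$\gtrsim$'' in \eqref{eq:monot2}, which holds whenever $\spt h\subseteq K^c$ and does not require the stronger hypothesis $\spt h\subseteq(3K)^c$, so there is no need to invoke goodness of the small intervals at this point.
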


\begin{proof}
Note that
\begin{equation*}
\begin{split}
  Q(1_{S\setminus K} &\ud w,K)
    \Big( \sum_{\substack{I\in\mathscr{I} \\ I\subseteq K }}\pair{\id}{h^\sigma_I}_{\sigma}^2\Big)^{1/2} \\
 &= Q(1_{S\setminus K}\ud w,K) \sup_{\Norm{f}{L^2(\sigma)}\leq 1}
     \Bpair{\sum_{\substack{I\in\mathscr{I} \\ I\subseteq K}}\Delta_I^\sigma f}{\id} \\
  &\lesssim \sup_{\Norm{f}{L^2(\sigma)}\leq 1}B\Big(\sum_{\substack{I\in\mathscr{I} \\ I\subseteq K }}\Delta_I^\sigma f,1_{S\setminus K}\Big) \\
  &\leq\Norm{1_K H(1_{S\setminus K}\ud w)}{L^2(\sigma)}  \\
  &\leq\Norm{1_K H(1_{S}\ud w)}{L^2(\sigma)}+\Norm{1_K H(1_{K}\ud w)}{L^2(\sigma)}.
\end{split}
\end{equation*}
The second term is bounded by $\mathcal{H}^* w(K)^{1/2}$, as a direct application of interval testing. For the first term, we have
\begin{equation*}
\begin{split}
  \Norm{1_K H(1_{S}\ud w)}{L^2(\sigma)}
  &=w(K)^{1/2}\Big(\frac{1}{w(K)}\int_K\abs{H(1_S\ud w)}^2\ud\sigma\Big)^{1/2} \\
  &\leq w(K)^{1/2}\Big(\frac{4}{w(S)}\int_S\abs{H(1_S\ud w)}^2\ud\sigma\Big)^{1/2}
  \leq 2\mathcal{H}^*w(K)^{1/2}
\end{split}
\end{equation*}
by the stopping condition \eqref{eq:energyStop} (using $\pi K=S$) and interval testing.
Thus
\begin{equation*}
  Q(1_{S\setminus K}\ud w,K)^2
    \sum_{\substack{I\in\mathscr{I} \\ I\subseteq K}}\pair{\id}{h^\sigma_I}_{\sigma}^2
  \leq(\mathcal{H}^*w(K)^{1/2}+2\mathcal{H}^*w(K)^{1/2})^2= (3\mathcal{H}^*)^2 w(K),
\end{equation*}
and the claim follows.
\end{proof}

Proposition~\ref{prop:LaceyMain}, Lemma~\ref{lem:estSize} and the preceding discussion prove that
\begin{equation*}
  \abs{B_{\operatorname{local}}(f,g)}\lesssim\mathcal{H}^*\Norm{f}{L^2(\sigma)}\Norm{g}{L^2(w)},
\end{equation*}
and this completes the estimation of the bilinear form of the Hilbert transform altogether, with the desired bound of the form $\mathcal{H}+\mathcal{H}^*+[\sigma,w]_{A_2}^*+[w,\sigma]_{A_2}^*$. The proof of Theorem~\ref{thm:main} is complete.


\end{document}